\definecolor{qqqqff}{rgb}{0.,0.,1.}
\definecolor{cqcqcq}{rgb}{0.7529411764705882,0.7529411764705882,0.7529411764705882}
\definecolor{ttqqqq}{rgb}{0.2,0.,0.}
\definecolor{qqqqff}{rgb}{0.,0.,1.}
\definecolor{xdxdff}{rgb}{0.49019607843137253,0.49019607843137253,1.}
\definecolor{zzttqq}{rgb}{0.6,0.2,0.}
\definecolor{cqcqcq}{rgb}{0.7529411764705882,0.7529411764705882,0.7529411764705882}
\definecolor{yqyqyq}{rgb}{0.5019607843137255,0.5019607843137255,0.5019607843137255}
\definecolor{uuuuuu}{rgb}{0.26666666666666666,0.26666666666666666,0.26666666666666666}
\definecolor{xdxdff}{rgb}{0.49019607843137253,0.49019607843137253,1.}
\definecolor{qqqqff}{rgb}{0.,0.,1.}
\newcommand{\ZZ}{\mathbb{Z}}
\newcommand{\RR}{\mathbb{R}}
\newcommand{\CC}{\mathbb{C}}
\newcommand{\MM}{\mathcal{M}}
\newcommand{\oo}{\mathfrak{o}}
\newcommand{\ev}{\mathrm{ev}}
\newcommand{\mom}{\mathrm{mom}}
\newcommand{\rkn}{\mathrm{rk}N }
\newcommand{\gen}[1]{\langle #1 \rangle}
\newcommand{\re}{\mathfrak{Re}}
\newcommand{\im}{\mathfrak{Im}}
\newcommand{\bino}[2]{\begin{pmatrix}
#1 \\
#2 \\
\end{pmatrix}}
\newtheorem{theo}{Theorem}[section]
\newtheorem*{theom}{Theorem}
\newtheorem{prop}[theo]{Proposition}
\newtheorem{coro}[theo]{Corollary}
\newtheorem{lem}[theo]{Lemma}
\theoremstyle{definition}
\newtheorem{defi}[theo]{Definition}
\theoremstyle{remark}
\newtheorem{remark}[theo]{Remark}
\newenvironment{rem}[1]{
    \begin{remark}#1}{
    \xqed{\blacklozenge}\end{remark}
}
\theoremstyle{remark}
\newtheorem{example}[theo]{Example}
\newenvironment{expl}[1]{
    \begin{example}#1}{
    \xqed{\lozenge}\end{example}
}
\newcommand{\xqed}[1]{
    \leavevmode\unskip\penalty9999 \hbox{}\nobreak\hfill
    \quad\hbox{\ensuremath{#1}}}
\keywords{Enumerative geometry, refined invariants\\ \textit{Data Statement:} I do not have any data to point.}
\begin{document}
 
 
\title{Refined count of oriented real rational curves}
\author{Thomas Blomme}

\begin{abstract}
We introduce a \textit{quantum index} for oriented real curves inside toric varieties. This quantum index is related to the computation of the area of the amoeba of the curve for some chosen $2$-form. We then make a refined signed count of oriented real rational curves solution to some enumerative problem. This generalizes the results from \cite{mikhalkin2017quantum} to higher dimension. Finally, we use the tropical approach to relate these new refined invariants to previously known tropical refined invariants.
\end{abstract}

\maketitle

\tableofcontents

\section{Introduction}

\subsection{Setting and enumeration of real rational curves}

Let $N$ be some lattice and $\Delta=(n_i)\subset N$ a family of primitive vectors of total sum $0$. We consider oriented real rational curves of degree $2\Delta=(2n_i)$ in some toric variety $\CC\Delta$ built from $\Delta$ and with a dense torus orbit $N_{\CC^*}=N\otimes\CC^*$. This toric variety is endowed with the complex conjugation, making it into a real variety whose real locus is denoted by $\RR\Delta$. As the degree is $2\Delta$, the curves we consider are tangent to the toric divisors at any of the intersection points they have with them. Moreover, the real locus of each such curve lies in a unique orthant of the real locus $\RR\Delta$, and we assume it to be the positive orthant $N_{\RR_+^*}=N\otimes\RR_+^*\subset\RR\Delta$. Finally, we assume that the curves have $R+1$ real intersection points with the boundary, and $S$ pairs of complex conjugated intersection points, so that the dimension of the space of such curves is
$$\rkn+|\Delta|-3=\rkn-2+R+2S.$$

\medskip

In the particular setting of the planar case $\rkn=2$, in \cite{mikhalkin2017quantum}, G. Mikhalkin considers the following enumerative problem: fix $|\Delta|-1$ points on the toric boundary and look for rational curves passing through them. More precisely, choose $R+1$ points on the boundary of the positive orthant $N_{\RR_+^*}$ and $S$ pairs of complex conjugated points on the toric boundary, such that
\begin{itemize}[label=-]
\item each toric divisor has a number of points corresponding to the intersection with a curve of degree $\Delta$,
\item the chosen points satisfy the \textit{Menelaus condition}, so that there exists at least one such curve. This condition is merely that the product of the \textit{coordinates} of all the points is $1$. Here, the \textit{coordinate} denote the evaluation of some specific monomial $m\in M=N^*$.
\end{itemize}
The second condition comes from the fact that for any curve of degree $\Delta$, the position of the last intersection point is fully determined by the others. Concretely, it means that the choice of the position of the first intersection point is not part of the constraints but given by the problem but given by the choice of the other constraints. Then, which are the real rational curves of degree $2\Delta$ passing through the configuration ?

\begin{expl}
Consider the toric surface $\CC P^2$ and $\Delta=\Delta_d=\{(-e_1)^d,(-e_2)^d,(e_1+e_2)^d\}$, the degree associated to degree $d$ curves. Then, we choose $3d$ points in $\CC P^2$, real or pairs of conjugated ones, with exactly $d$ points on each coordinate axis: $[a_i:0:1]$, $[0:1:b_i]$ and $[1:c_i:0]$ for $1\leqslant i\leqslant d$ with $a_i,b_i,c_i>0$. We assume the Menelaus condition: $\prod_1^d a_ib_ic_i=1$. We are thus looking for curves of degree $2d$ tangent to each coordinate axis and passing through the point configuration.
\end{expl}

This enumerative problem can be seen as a degenerated case of the usual enumerative problem consisting in counting rational curves passing through the right number of points chosen in generic position inside a toric surface, and then taking the square map $\mathrm{Sq}:\CC\Delta\rightarrow\CC\Delta$ that squares all coordinates, to get curves which are tangent to the toric divisors where they meet.



\medskip

In \cite{mikhalkin2017quantum}, Mikhalkin introduced a quantum index (a discrete number associated to each real type I curve) and a sign, such that the signed count of the solutions to the considered enumerative problem, sorted out by the value of their quantum index, does not depend on the choice of the points on the boundary of the toric surface, as long as this choice is generic. Similarly to the case of Welschinger invariants \cite{welschinger2005invariants}, the count does only depend on the number of pairs of complex conjugated points on each toric divisor. In case all the chosen points are real, Mikhalkin related through the use of his correspondence theorem \cite{mikhalkin2005enumerative} the refined signed count to a tropical count using the refined Block-G\"ottsche multiplicity \cite{block2016refined}. In presence of pairs of complex points, the relation to tropical invariants was proved by the author in  \cite{blomme2020tropical} and \cite{blomme2020computation}.

\medskip

In higher dimension, there is a broader freedom in the choice of the constraints imposed on the curves: instead of imposing the curve to pass through some points, it is possible to ask them to meet some torus suborbits inside the toric variety $\CC\Delta$. Although the complex considerations generalize quite easily to a higher dimensional setting, in the sense that we still have complex invariants and a tropical way to compute them, the search for real invariants and refined tropical invariants is much more difficult. In the tropical setting, this is partly due to the fact that the multiplicity provided by higher versions of the correspondence theorem due to T. Nishinou and B. Siebert \cite{nishinou2006toric}, or I. Tyomkin \cite{tyomkin2017enumeration}, is not given by a simple recipe allowing us to apply the trick yielding the refined Block-G\"ottsche multiplicity. Concretely, it means that the multiplicity is not a product of vertex multiplicities anymore. However, for a specific choice of enumerative problem involving some choice of $2$-form $\omega$, we recover such a simple recipe providing real tropical invariants \cite{mandel2015scattering}. This suggests the existence of a corresponding real refined count which is precisely the content of this paper: we enlarge the definition of the quantum index in higher dimension, and we provide a corresponding real refined count.

\begin{rem}
The existence of tropical refined invariants has been generalized by the author in \cite{blomme2020refined} for enumerative problems which do not rely on the choice of a $2$-form, and for which the complex multiplicity is not a product over the vertices of the tropical curve. The associated real setting might also be conducive to the existence of real refined invariants.
\end{rem}

The real invariants introduced in this paper provide through the use of the correspondence theorem a real counterpart to some tropical invariants which are used in mirror symmetry for cluster varieties and Calabi-Yau manifolds. Briefly, already in the planar setting, in the work of M.~Gross, R.~Pandharipande, B.~Siebert and S.~Keel, the associated complex invariants introduced in \cite{gross2010tropical} are used in \cite{gross2015mirror} to construct mirrors to certain Calabi-Yau manifolds through the use of \textit{scattering diagrams}. Meanwhile, the tropical refined invariants appear in the corresponding scattering for some quantized (or deformed) noncommutative version of the varieties, related to quantum cluster algebras. The quantum versions of \cite{gross2010tropical} and \cite{gross2015mirror} have been done by P.~Bousseau \cite{bousseau2020vertex} \cite{bousseau2020mirror} although quantum scattering diagrams had already been considered by S-A.~Filippini and J.~Stoppa \cite{filippini2012block}. See also the work of T.~Mandel \cite{mandel2015scattering} and \cite{mandel2019tropical} for more details on the use of refined tropical invariants, and higher dimensional scattering diagrams.

\subsection{Quantum indices of real curves}

In the planar setting, the \textit{quantum index} introduced by Mikhalkin in \cite{mikhalkin2017quantum} corresponds to a shift of the equal areas of the amoeba and coamoeba of a real oriented curve. More precisely, let $f:\CC C\dashrightarrow N_{\CC^*}$ be a type I real curve, meaning that $\RR C$ disconnects the Riemann surface $\CC C$ into two connected components $S$ and $\overline{S}$. The choice of one of the connected components $S$ induces an orientation of $\RR C$ as its boundary, called a \textit{complex orientation}. Moreover, we have the logarithm and argument map which map a point of $N_{\CC^*}$ to its logarithm and argument taken coordinate by coordinate:
$$\left\{ \begin{array}{l}
\log:\CC\Delta\dashrightarrow N_\RR , \\
\arg:\CC\Delta\dashrightarrow N\otimes\RR/2\pi\ZZ ,\\
\end{array} \right. $$
The image of $\CC C$ or $S$ by $\log$ (resp. $\arg$) is called the \textit{amoeba} (resp. \textit{coamoeba}). When $N$ is of rank $2$, we have up to sign a canonical choice of volume form $\varpi\in\Lambda^2 M$ which allows us to compute the area of the amoeba:
$$\mathcal{A}_\mathrm{log}(S,\varpi)=\int_{S}(\log\circ f)^*\varpi=\int_{S}(\arg\circ f)^*\varpi,$$
and similarly the area of the coamoeba. Both are in fact equal. Up to a shift by the argument of the coordinates of the complex intersection points with the toric boundary, the log-area happens to be a half-integer multiple of $\pi^2$, called the \textit{quantum index} (see \cite{mikhalkin2017quantum}). In particular, if all intersection points are real, we get a half-integer multiple of $\pi^2$ without having to shift the log-area.

\medskip

In higher dimension, we define with the same formula the log-area for some choice of $2$-form $\varpi$. We thus have several possible generalizations for the quantum index, according to whether the intersection points with the toric boundary are all real or there are some pairs of complex intersection points. If there are complex intersection points, we still compute the log-area $\mathcal{A}_\mathrm{log}(S,\omega)$ with respect to the $2$-form $\omega$ and shift it by some quantity depending on the argument of the complex intersection points and $\omega$. See section \ref{section quantum index} for more details. The $2$-form is here denoted by $\omega$ because in order to get the invariance statement, we need to compute the log-area with respect to the $2$-form used to define the enumerative problem.

\medskip

If all the intersection points are real, it is possible to further refine the quantum index by defining some class $h(S)\in\Lambda^2 N$ that enables the computation of the log-area for any choice of $2$-form $\varpi\in\Lambda^2 M$.

\begin{theom}[\ref{theorem quantum index}]
Let $f:S\dashrightarrow N_{\CC^*}$ be a real oriented curve of degree $\Delta$.
\begin{itemize}[label=$\circ$]
\item If $S$ has only real intersection with the toric boundary, there exists a unique class $h(S)\in\Lambda^2 N$, called the quantum index, such that for any $2$-form $\varpi$ one has
$$\mathcal{A}_{\log}(S,\varpi)=\frac{\pi^2}{2}\langle h(S),\varpi\rangle.$$
\item If $S$ has complex intersection with the toric boundary, let $\varepsilon_j\theta_j\pi$ be the argument of the monomial $\iota_{n_j}\omega$ evaluated at $p_j$ for each complex puncture $p_j$, with $\varepsilon_j=\pm 1$ and $0<\theta_j<1$. Then there exists some half-integer $k(S,\omega)\in\frac{1}{2}\ZZ$ such that
$$\mathcal{A}_{\log}(S,\omega)-\pi^2\sum_j \varepsilon_j(2\theta_j-1)=\pi^2k(S,\omega).$$
\end{itemize}
\end{theom}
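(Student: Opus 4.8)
The plan is to reduce the statement to a computation of the area of the coamoeba and then to a Stokes‑type argument on a compactification of $S$.

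First I would record the identity $\mathcal{A}_{\log}(S,\varpi)=\int_S(\log\circ f)^*\varpi=\int_S(\arg\circ f)^*\varpi$ quoted above: since $f$ is holomorphic, for $\varpi=m\wedge m'\in\Lambda^2 M$ the pull‑back $f^*(m\wedge m')$ is a $(2,0)$‑form on the Riemann surface $\CC C$ and hence vanishes identically, so taking real parts yields the pointwise equality of the two integrands. From now on I work with the argument map $g:=\arg\circ f:S\dashrightarrow T_N$, where $T_N:=N\otimes\RR/2\pi\ZZ$. Since $\varpi\mapsto\mathcal{A}_{\log}(S,\varpi)$ is linear, for the first bullet it suffices to prove that $\mathcal{A}_{\log}(S,m\wedge m')$ is a half‑integer multiple of $\pi^2$ as $m,m'$ run over a basis of $M$; one then defines $h(S)$ by $\langle h(S),\varpi\rangle=\tfrac{2}{\pi^2}\mathcal{A}_{\log}(S,\varpi)$, and the only thing left to prove is that this element of $\Lambda^2 N\otimes\QQ$ is integral.

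Next I would compactify. Replace $S$ by its oriented real blow‑up $\widehat S$ along the punctures: a compact oriented surface with boundary, to which $g$ extends continuously. The extension is controlled by the local normal form forced by the degree being $2\Delta$: near a puncture $p$ lying on the toric divisor associated with $n(p)\in\Delta$ one has $z_k=c_k\, t^{2\langle e_k^*,\,n(p)\rangle}(1+O(t))$, so on the boundary component created at $p$ the map $g$ reads $\theta\mapsto\arg c_p+2\theta\, n(p)$. Three observations then organise the computation: the parts of $\partial\widehat S$ lying over $\RR C$ are sent by $g$ to the base point $0\in T_N$, because the real locus of the curve sits in the positive orthant $N_{\RR_+^*}$; at a real puncture $c_p\in N_{\RR_+^*}$, so $\arg c_p=0$ and the corresponding arc of $g$ is a loop of class $n(p)$ in $H_1(T_N)=N$; and $\sum_p n(p)=0$, because $\CC C$ is rational so $\RR C$ is a single circle carrying all the real punctures while $\Delta$ sums to $0$. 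Thus, when all punctures are real, $\partial\widehat S$ is a single circle whose $g$‑image is a null‑homotopic loop, a concatenation of the ``lattice loops'' attached to the $n(p)$ separated by constant arcs; in the presence of complex punctures, $\partial\widehat S$ acquires one extra circle $\delta_p$ for each complex puncture of $S$, on which $g$ is a loop of class $\pm 2n(p)$ based at the point $\arg c_p\in T_N$.

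Now the computation itself. As $\widehat S$ is a compact surface with non‑empty boundary, $H^2(\widehat S;\RR)=0$, so $g^*\varpi=\dd\eta$ and $\mathcal{A}_{\log}(S,\varpi)=\int_{\widehat S}g^*\varpi=\int_{\partial\widehat S}\eta$; equivalently, cap off the boundary of $g$ by a $2$‑chain $D$ in $T_N$ — possible because the total class of $g|_{\partial\widehat S}$ in $H_1(T_N)$ vanishes — so that $\mathcal{A}_{\log}(S,\varpi)$ equals the pairing of $\varpi$ with the class of the resulting $2$‑cycle in $H_2(T_N;\ZZ)=\Lambda^2 N$, minus the ``area'' $\int_D\varpi$ of the cap. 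When all punctures are real the cap can be taken to be a polygon with vertices in the lattice $2\pi N$ (the lift to $N_\RR$ of the loop built from the $n(p)$), so every term is a purely lattice‑theoretic integral; keeping track of the normalisation (all edge ``lengths'' are multiples of $2\pi$) shows that $\mathcal{A}_{\log}(S,\varpi)$ is a half‑integer multiple of $\pi^2$, whence $h(S)\in\Lambda^2 N$ with $\mathcal{A}_{\log}(S,\varpi)=\tfrac{\pi^2}{2}\langle h(S),\varpi\rangle$. For the second bullet one runs the same argument with $\varpi=\omega$; the only new feature is that the cap attached to $\delta_{p_j}$ has a vertex displaced by $\arg c_{p_j}$, so its area acquires a contribution that is linear in $\arg c_{p_j}$ and that one identifies with the argument at $p_j$ of the monomial $\iota_{n_j}\omega$ — a regular monomial near $p_j$, since $\langle\iota_{n_j}\omega,n_j\rangle=\omega(n_j,n_j)=0$. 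Writing that argument as $\varepsilon_j\theta_j\pi$ with $\varepsilon_j=\pm1$ and $0<\theta_j<1$, the part of the total cap area not lying in $\tfrac{\pi^2}{2}\ZZ$ is exactly $\pi^2\sum_j\varepsilon_j(2\theta_j-1)$, so after subtracting it the remainder is a half‑integer multiple of $\pi^2$, which defines $k(S,\omega)\in\tfrac{1}{2}\ZZ$.

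The hard part is this last step. Once the topological skeleton of the argument is in place, essentially all of the difficulty lies in the bookkeeping: orienting coherently the (now several) boundary components, handling the multivaluedness of the branches of the functions $\arg z_k$ along them, and isolating the precise linear combination of local contributions responsible for the half‑integrality, as opposed to merely for rationality of $\mathcal{A}_{\log}(S,\varpi)/\pi^2$. This is the analogue of the core computation of \cite{mikhalkin2017quantum} in the planar case, where the factor $\tfrac{1}{2}$ — and, in the presence of complex punctures, the role of the normalisation $0<\theta_j<1$ in pinning down $\varepsilon_j(2\theta_j-1)$ — ultimately comes from the complex‑conjugation symmetry $g\circ\mathrm{conj}=(-\mathrm{id})\circ g$ relating $S$ to $\overline S$.
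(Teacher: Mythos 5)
Your proposal uses the same starting point as the paper — the coamoeba map $\arg\circ f$, the extension to an oriented real blow‑up of $S$, and the description of $\partial\widehat S$ via geodesic arcs attached to the punctures — but finishes by a Stokes/capping argument inside the full torus $T_N=N\otimes\RR/2\pi\ZZ$, whereas the paper first reduces modulo $\pi$ and then passes to the quotient $K_N=(N\otimes\RR/\pi\ZZ)/\{\pm\mathrm{id}\}$. That reduction is the whole point: in $K_N$ the boundary geodesics fold at the fixed points of $-\mathrm{id}$ and cancel, so $\arg\circ f$ becomes an honest $2$-cycle realizing a class in $H_2(K_N,\ZZ)$; the form $\varpi$ descends to $H^2(K_N,\ZZ)$ because $-\mathrm{id}$ acts trivially there, and the integer pairing plus the volume of $K_N$ (half that of $N\otimes\RR/\pi\ZZ$) give the $\tfrac{\pi^2}{2}$ for free, with no choice of cap. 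Your route defers precisely this half‑integrality to an explicit lattice‑polygon area computation which you acknowledge but do not carry out, so the ``hard part'' that the quotient trick eliminates remains open in your write‑up.

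There are also two concrete errors that would need fixing before the computation could be completed. First, the theorem is stated for a curve of degree $\Delta=(n_k)$ with the $n_k$ \emph{primitive}, so the local normal form is $z_k=c_k\,t^{\langle e_k^*,n(p)\rangle}(1+O(t))$, not $t^{2\langle e_k^*,n(p)\rangle}$; the boundary arcs at real punctures are therefore \emph{half}-geodesics whose endpoints sit in $\pi N$, not $2\pi N$. With vertices in $2\pi N$ your cap would have area in $2\pi^2\ZZ$, giving only $4h(S)\in\Lambda^2 N$ rather than the sharp $h(S)\in\Lambda^2 N$ (and it would also be inconsistent with your claim that the leftover is ``a half-integer multiple of $\pi^2$''); with vertices in $\pi N$ the normalization comes out right. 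Second, the hypothesis that the cap exists ``because $\CC C$ is rational so $\RR C$ is a single circle'' is both unnecessary and too restrictive (the theorem is for arbitrary type~I curves): the class $g_*[\partial\widehat S]$ vanishes in $H_1(T_N)$ automatically, since $[\partial\widehat S]$ is already zero in $H_1(\widehat S)$ by the long exact sequence of the pair $(\widehat S,\partial\widehat S)$. Neither point invalidates the approach, but as written the proposal does not reach the stated normalization and restricts the class of curves unnecessarily.
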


Similarly to the planar case, it is possible to compute the quantum index when the curves are of toric type I, which are some specific curves having only real intersection points with the toric boundary, and for rational curves. Moreover, due to its description as an area, the quantum index is naturally additive, and this allows us to compute the quantum indices of curves in a family close to their tropical limit. Hence, the quantum index fits easily in the setting of a correspondence theorem.

\subsection{Refined enumerative geometry and invariance result}

We now precise the setting of the enumerative problem for which we provide an invariance statement. We consider oriented real rational curves of degree $2\Delta$ with real locus inside the positive orthant $N_{\RR_+^*}$ of $\CC\Delta$. Let $\omega$ be a $2$-form chosen generically in the sense of section \ref{section statement of results}, or remark \ref{remark generic choice}. The parameters of the points of intersection between a rational curve and the toric boundary are denoted by $p_0,\dots,p_R,p_1^\pm,\dots,p_S^\pm\in\CC P^1$, where $p_j^\pm$ are pairs of complex conjugated points. We have the evaluation map that maps a curve $f:\CC P^1\dashrightarrow N_{\CC^*}$ to
\begin{itemize}[label=-]
\item a family of $\rkn-2$ monomials evaluated at $f(p_0)$,
\item the monomials $\iota_{n_i^{(+)}}\omega$ evaluated at $f(p_i^{(+)})$, where $n_i^{(+)}$ is the vector in the fan of $\CC\Delta$ associated to the toric divisor to which $f(p_i^{(+)})$ is mapped.
\end{itemize}
We then have
$$\ev:N_\RR\times\MM_{0,R,S}^\tau\longrightarrow(\RR_+^*)^{\rkn-2}\times(\RR_+^*)^R\times(\CC^*)^S=X,$$
where $\MM_{0,R,S}^\tau$ denotes the moduli space of oriented real rational curves with $R+1$ real marked points and $S$ pairs of complex conjugated marked points. Its product with $N_\RR$ parametrizes the oriented real rational curves mapped to $\CC\Delta$ in which we are interested. We can also assume the target space $X$ to be $\RR^{\rkn-2+R}\times(\CC/2i\pi\ZZ)^S$ after composing by the logarithm for each coordinate. The enumerative problem amounts to find the preimage of a generic point $(P,\mu)\in X$. Concretely, it means finding curves such that each intersection point with the boundary belongs to a codimension $2$ suborbit of slope prescribed by $\omega$ and lying inside the toric boundary. The chosen suborbits satisfy some Menelaus condition similar to the planar case, meaning that the suborbit to which $f(p_0)$ belongs is fixed by the other. The other constraint is that $p_0$ is mapped to some fixed point inside this suborbit.

\medskip

We now define a sign associated to each oriented rational curve as follows. For more details, see section \ref{section statement of results}. Choose an orientation $\oo(X)$ of $X$ and an orientation $\oo(N)$ of the lattice $N$. The connected components of $\MM_{0,R,S}^\tau$ are in bijection with the permutations $\gamma\in\mathcal{S}_R$ and subsets $J\subset[\![1;S]\!]$. The permutation $\gamma$ corresponds to the cyclic order induced by the orientation on the real marked points:
$$p_0<p_{\gamma(1)}<\cdots<p_{\gamma(R)}<p_0,$$
and $J$ corresponds to the indices $j$ such that $p_j^-$ belongs to $S$, the preferred component inducing the complex orientation of the curve. The corresponding component is denoted by $\MM(\gamma,J)$. It is possible to define a canonical orientation $\oo^{\gamma,J}(\MM)$ of $\MM(\gamma,J)$, as done in section \ref{section orientation moduli space}. The sign of a curve is then defined as follows, and depends on whether or not the evaluation map preserves the orientations: with $\varepsilon(\gamma)$ being the signature of $\gamma$,
$$\sigma(S)=\varepsilon(\gamma)(-1)^{|J|}\left.\frac{\ev^*\oo(X)}{\oo(N)\oo^{\gamma,J}(\MM)}\right|_S.$$

Then, for a generic generic choice of $(P,\mu)$, we can make the refined signed count of the solutions:
\begin{itemize}[label=$\circ$]
\item If $S=0$, so that there are no complex constraints, we set
$$R_{\Delta,\omega}(P,\mu)=\sum_{S\in\ev^{-1}(P,\mu)}\sigma(S)q^{h(S)}\in \ZZ[\Lambda^2 N],$$
where $h(S)\in\Lambda^2 N$ is the quantum index associated to the oriented real rational curve $S$. This is a Laurent polynomial in $\bino{\rkn}{2}$ variables. To obtain a Laurent polynomial in one variable, one can evaluate a $2$-form $\varpi$ on the exponents. The chosen $2$-form $\varpi$ might be different from $\omega$.
\item If $S\neq 0$, \textit{i.e.} with complex constraints, we set
$$R_{\Delta,\omega}(P,\mu)=\sum_{S\in\ev^{-1}(P,\mu)}\sigma(S)q^{k(S,\omega)}\in \ZZ[q^{\pm 1/2}],$$
where $k(S,\omega)\in\frac{1}{2}\ZZ$ is the quantum index associated to the oriented real rational curve $S$, \textit{i.e.} a shift of its log-area with respect to the form $\omega$ used to define the enumerative problem.
\end{itemize}

We then have the following invariance statement. Notice that it contains invariance for both the totally real case, and in presence of complex constraints.

\begin{theom}[\ref{theorem invariance}]
The value of $R_{\Delta,\omega}(P,\mu)$ does not depend on the choice of $(P,\mu)$ as long as it is generic. It is an invariant denoted by $R_{\Delta,\omega}$.
\end{theom}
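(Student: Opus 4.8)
The plan is to run the standard wall-and-chamber argument, following the template of \cite{mikhalkin2017quantum} in the planar case. First I would stratify the target $X$: its \emph{discriminant} $\mathrm{Disc}\subset X$ --- the complement of the locus of generic $(P,\mu)$ --- is the union of the critical values of $\ev$ on the smooth locus of $N_\RR\times\MM_{0,R,S}^\tau$, the image $\ev(\partial)$ of the boundary strata of the compactification $N_\RR\times\overline{\MM}_{0,R,S}^\tau$ (colliding marked points; the translation coordinate escaping to a toric limit; a conjugate pair of marked points reaching the real locus), and, when $S\neq 0$, the locus where some $\theta_j\in\{0,1\}$, which makes the shift in Theorem~\ref{theorem quantum index} degenerate. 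A dimension count should show $\mathrm{Disc}$ has real codimension $\geq 1$, with a ``bad part'' (self-crossings of walls, critical values of corank $\geq 2$, codimension $\geq 2$ degenerations) of codimension $\geq 2$. On any chamber the solutions of $\ev(S)=(P,\mu)$ vary smoothly with $(P,\mu)$, there are finitely many of them, and both $\sigma(S)$ (the orientations vary continuously) and the quantum index --- $h(S)\in\Lambda^2 N$, resp.\ $k(S,\omega)\in\tfrac{1}{2}\ZZ$, which are \emph{continuous} because $\mathcal{A}_{\log}(S,\cdot)$ and the $\theta_j$ depend continuously on the curve, and \emph{discretely valued} by Theorem~\ref{theorem quantum index} --- are constant; so $R_{\Delta,\omega}$ is constant on each chamber. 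Since any two generic points of $X$ are joined by a path meeting $\mathrm{Disc}$ transversally at finitely many points of its codimension-one, non-bad part, it remains to show $R_{\Delta,\omega}$ is unchanged across a single such generic wall.

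Next I would classify the generic wall crossings into two types. The first is a simple fold of $\ev$: near the wall the equations defining the solutions have normal form $x^2=s$ ($s$ transverse to the wall), so two real solutions $S_1,S_2$ exist on one side and none on the other, colliding at a limit which is still a smooth real oriented rational curve of degree $\Delta$ with distinct marked points. Continuity and discreteness of the quantum index then force $h(S_1)=h(S_2)$, resp.\ $k(S_1,\omega)=k(S_2,\omega)$, while the local degree of $\ev$ relative to the coherently defined orientations $\oo(N)$, $\oo^{\gamma,J}(\MM)$, $\oo(X)$ takes opposite values on the two branches, so $\sigma(S_1)=-\sigma(S_2)$; hence $\sigma(S_1)q^{h(S_1)}+\sigma(S_2)q^{h(S_2)}=0$ and $R_{\Delta,\omega}$ does not jump. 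The second type is a crossing of a codimension-one component of $\ev(\partial)$, where a solution threatens to escape to the boundary of the non-compact moduli space. Using the description of rational curves in the torus as products of linear factors up to translation, I would enumerate the codimension-one degenerations --- a pair of boundary marked points colliding, so the domain bubbles off a component mapped to a deeper boundary stratum of $\CC\Delta$; a conjugate pair of marked points reaching the real locus; the translation escaping in a rational direction --- and argue that their contributions to $R_{\Delta,\omega}$ cancel. For the first two, the relevant solutions pair up across the wall with equal quantum index (using additivity of the quantum index and continuity of the $\theta_j$) and opposite sign, the sign reversal coming from the interplay of $\varepsilon(\gamma)$, the factor $(-1)^{|J|}$ and the change of combinatorial component $\MM(\gamma,J)$ forced by the degeneration. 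For the escape to a rational direction I would pass to the tropical limit, where the correspondence theorem identifies the solutions and their quantum indices with tropical data and invariance reduces to that of the corresponding tropical $\omega$-count (cf.~\cite{mandel2015scattering}). Together these show $R_{\Delta,\omega}$ is unchanged across every generic wall, hence independent of the generic $(P,\mu)$.

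The hardest part would be the boundary-degeneration analysis of the second type. In higher dimension the complex multiplicity is no longer a product over vertices of the tropical curve, so the naive planar wall-by-wall bookkeeping is unavailable; instead one must exploit the additivity of the quantum index --- itself a consequence of its description as an area --- to control the quantum indices of solutions near the tropical limit, and then invoke the invariance of the tropical $\omega$-count. Closely tied to this, and equally delicate, is the orientation bookkeeping: checking that $\oo^{\gamma,J}(\MM)$ is truly coherent across a fold, so that the two colliding branches get opposite signs, and tracking exactly how $\varepsilon(\gamma)$, $(-1)^{|J|}$ and the preferred component $S$ transform when a conjugate pair of marked points interacts with the real locus. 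This last point is the higher-dimensional analogue of the sign analysis in \cite{blomme2020tropical} and \cite{blomme2020computation}, and it is what makes the statement survive in the presence of complex constraints.
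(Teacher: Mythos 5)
Your plan correctly reproduces the easy part of the paper's argument, namely that the count is locally constant away from walls and is preserved across a simple fold of $\ev$ because it is defined as a signed local degree. But the treatment of the second wall type — the reducible degeneration — is wrong in two essential ways, and this is exactly where the paper's proof has its content. First, Proposition \ref{prop reducible limit} shows that for generic $\omega$ a family with fixed momenta cannot converge to a reducible curve with some but not all components in the toric boundary; the only reducible degeneration that actually appears on a generic path is to $C_A\cup C_B$ with \emph{both} components mapped into the dense torus orbit. Your description of ``a component bubbling off to a deeper boundary stratum of $\CC\Delta$'' is precisely the case the genericity hypothesis excludes. Second, and more seriously, this wall is not a fold and is not handled by ``pairing with equal quantum index and opposite sign.'' When the node of $C_A\cup C_B$ is smoothed in the two ways, the quantum index jumps from $k_A+k_B$ to $k_A-k_B$; the paper explicitly notes that local invariance on the domain \emph{fails} here. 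The mechanism that rescues invariance is global: fixing $C_A$, the possible $C_B$'s are in bijection with the intersection points $C_A\cap\mathcal{H}^{\beta}(\mu_B)$ with the hypersurface of fixed momenta; Proposition \ref{prop which side of the wall} identifies $\mathrm{sgn}\ \omega(\tau_A,\tau_B)$ both as the local intersection sign and as the determinant of which side of the wall the orientation-preserving smoothing lives; and since $C_A$ is contractible while $\mathcal{H}^{\beta}(\mu_B)$ is closed in $N_\RR$ (Corollary to Proposition \ref{prop reducible limit}), the signed intersection number vanishes, giving cancellation between the two sides. Your proposal has no analogue of the hypersurface $\mathcal{H}^\beta(\mu_B)$, of the crossing number $\omega(\tau_A,\tau_B)$, or of the resulting zero-intersection argument, so it cannot close the proof at the reducible wall.

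A secondary issue: invoking the tropical $\omega$-count to settle the ``escape to a rational direction'' wall is circular in this paper's logic, since Theorem \ref{theorem tropical computation} is a \emph{consequence} of the invariance statement together with the correspondence theorem, not an independent input. In fact, the paper does not need to analyse such a wall at all: closedness of the momenta hypersurfaces and the exclusion of mixed reducible limits (both consequences of the generic choice of $\omega$) already prevent solutions from escaping to infinity along a generic path.
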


\begin{rem}
Notice that implicitly, the data of $\Delta$ includes the first vector directing the first end $n_0$ associated to $p_0$ which has more than an $\omega$-constraint, and the pairs of vectors associated to points $p_j^\pm$ exchanged by the complex conjugation. In particular, the value of the Laurent polynomial depends on the choice of $p_0$ as first point, and the repartition of real and complex constraints among the toric divisors. The latter is not surprising since it is already the case in the planar setting.
\end{rem}

As the polynomial $R_{\Delta,\omega}$ does not depend on the choice of $(P,\mu)$, it is possible to try to compute the invariant by choosing the constraints close to the tropical limit. Applying the correspondence theorem from \cite{blomme2020computation} and doing a readily identical computation, we can then relate the classical invariants $R_{\Delta,\omega}$ to tropical invariants described in \cite{blomme2020refined}. Briefly, they are defined as follows. Let $\Delta_\mathrm{trop}$ be the degree obtained from $\Delta$ by merging the vectors associated to a complex marking: the pair $(n_j^+,n_j^-)$ is replaced by a unique vector $2n_j^+$. We then count the rational tropical curves of degree $\Delta_\mathrm{trop}$ such that the end directed by $n_i$ belongs to a fixed affine hyperplane of slope $\iota_{n_i}\omega$, and the first end directed by $n_0$ belongs to a fixed plane, all constraints being chosen generically. The tropical curves $\Gamma$ are counted with a refined multiplicity
$$m_\Gamma^q=\prod_w(q^{a_w\wedge b_w}-q^{-a_w\wedge b_w}) \in \ZZ[\Lambda^2 N],$$
where the product is indexed by the vertices of $\Gamma$, and $a_w,b_w$ are the slopes of the outgoing edges, in an order such that $\omega(a_w,b_w)>0$

\begin{theom}\cite{blomme2020refined}
The count of solutions using $m_\Gamma^q$ does not depend on the constraints as long as they are chosen generically. The corresponding tropical invariant is denoted by $N_{\Delta_\mathrm{trop},\omega}^{\partial,\mathrm{trop}}$.
\end{theom}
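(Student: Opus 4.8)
The plan is to prove invariance by a wall-crossing argument in the space of constraint configurations, the refined analogue of the deformation proof behind Mikhalkin's correspondence theorem. First I would assemble the parametrized rational tropical curves of degree $\Delta_\mathrm{trop}$ carrying the $R+S+1$ constrained ends into a weighted polyhedral complex $\MM$, on whose maximal cells the curves are trivalent with pairwise distinct vertices; assigning to such a curve the positions of its constrained ends defines a map $\mathrm{ev}_\mathrm{trop}\colon\MM\to\mathcal{C}$ to the affine, Menelaus-compatible space $\mathcal{C}$ of configurations, and a count of degrees of freedom shows $\dim\MM=\dim\mathcal{C}$, with $\mathrm{ev}_\mathrm{trop}$ affine of maximal rank on each maximal cell. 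For $c\in\mathcal{C}$ lying outside the image of the cells of codimension $\geqslant 1$ and outside the locus where $\mathrm{ev}_\mathrm{trop}$ fails to be a local isomorphism --- the \emph{generic} $c$ --- the fibre $\mathrm{ev}_\mathrm{trop}^{-1}(c)$ is finite, consists of trivalent curves with distinct vertices meeting all constraints transversally, and $N_{\Delta_\mathrm{trop},\omega}^{\partial,\mathrm{trop}}(c):=\sum_{\Gamma\in\mathrm{ev}_\mathrm{trop}^{-1}(c)}m_\Gamma^q\in\ZZ[\Lambda^2N]$ is well defined.

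To see that this does not depend on $c$, I would join two generic configurations by a generic path $c_t$ meeting the discriminant $\mathcal{D}\subset\mathcal{C}$ transversally, away from strata of codimension $\geqslant 2$, and check that the count is unchanged across each crossing. The crossings are of two kinds. The essential one corresponds to a curve $\Gamma_0$ that is trivalent apart from a single four-valent vertex $v$; near the corresponding point of $\mathcal{D}$ the part of the curve away from $v$ deforms rigidly, so the solutions on the two sides of the wall are precisely the trivalent resolutions of $v$ realizable on each side, and --- since $m_\Gamma^q$ is a product over the vertices, which is exactly Mandel's observation underlying its definition --- the whole wall-crossing reduces to a local identity at $v$. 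Writing $u_1,\dots,u_4$ for the weighted outgoing edge vectors, $u_1+\dots+u_4=0$, and recording at each trivalent vertex the $\omega$-prescribed order $\omega(a,b)>0$, the identity to establish is
\begin{multline*}
\sum\nolimits_{(1)}\bigl(q^{a'\wedge b'}-q^{-a'\wedge b'}\bigr)\bigl(q^{a''\wedge b''}-q^{-a''\wedge b''}\bigr)\\
=\sum\nolimits_{(2)}\bigl(q^{a'\wedge b'}-q^{-a'\wedge b'}\bigr)\bigl(q^{a''\wedge b''}-q^{-a''\wedge b''}\bigr)
\end{multline*}
in $\ZZ[\Lambda^2N]$, where $(1)$ and $(2)$ index the resolutions of $v$ realizable on the two sides and $(a',b'),(a'',b'')$ are the outgoing edge pairs at the two new trivalent vertices of the corresponding resolution (the created bounded edge having direction $u_i+u_j$ dictated by the pairing). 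Setting $q=1$ recovers the classical balancing relation that makes Mikhalkin's count invariant; the point is that the equality already holds refined, which one obtains by expanding both sides with the help of $u_1+\dots+u_4=0$ and matching the exponents in $\Lambda^2N$ --- the $\omega$-orderings being precisely what pins down the signs so that the two sides agree term by term, whether the $u_i$ span a $2$- or a $3$-dimensional subspace.

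The second kind of crossing is one at which a constrained end of $\Gamma_0$, or the end $n_0$ with its more rigid constraint, meets its affine constraint non-transversally. There the combinatorial type of the contributing curve is unchanged and $m_\Gamma^q$ is simply continued across the wall, so invariance is immediate; when two such degeneracies occur simultaneously it follows from a two-term degenerate case of the identity above. Adding up the invariance over the finitely many crossings of $c_t$ gives $N_{\Delta_\mathrm{trop},\omega}^{\partial,\mathrm{trop}}(c_0)=N_{\Delta_\mathrm{trop},\omega}^{\partial,\mathrm{trop}}(c_1)$.

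I expect the main obstacle to be the local identity at the four-valent vertex: not its $q=1$ specialization, which is classical, but upgrading that absolute-value statement to an honest polynomial identity in $\ZZ[\Lambda^2N]$, which requires getting the $\omega$-prescribed orders right and correctly matching the realizable resolutions to the two sides of the wall. This is also where the genericity of $\omega$ enters: it guarantees that every four-valent vertex met along a generic path is non-degenerate --- no two of its edges antiparallel, no $\omega$-pairing of value zero --- so that exactly the expected resolutions occur and the identity applies as stated.
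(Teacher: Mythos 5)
This theorem is not proved in the present paper: it is a citation, with the paper explicitly deferring to \cite{blomme2020refined} and \cite{mandel2015scattering} for the argument. So there is no internal proof to compare against. That said, your wall-crossing sketch is the standard route taken in those references, and the overall structure --- set up the moduli complex $\MM$ and the affine map $\mathrm{ev}_\mathrm{trop}$, observe that away from the discriminant the count is locally constant, and reduce the analysis of a generic codimension-one crossing to a local identity at a single four-valent vertex, using the product-over-vertices structure of $m_\Gamma^q$ --- is the right one.

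The gap is precisely at the step you flag as the main obstacle, and it is more of a gap than your wording suggests. The local identity at a four-valent vertex is not a term-by-term matching of exponents after using $u_1+\dots+u_4=0$: writing $A=u_1\wedge u_2$, $B=u_1\wedge u_3$, $C=u_2\wedge u_3$, the three resolutions contribute (up to the $\omega$-sign normalization) $[A][B+C]$, $[B][A-C]$ and $[-A-B][C]$ with $[x]:=q^x-q^{-x}$, and the relevant fact is the genuine cancellation identity $[A][B+C]-[B][A-C]+[A+B][C]=0$ in $\ZZ[\Lambda^2 N]$. Expanding, two monomials cancel between the first two products before the remainder reassembles into the third; nothing is matched bijectively. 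Proving this, and, more delicately, showing that the distribution of the three resolutions on the two sides of the wall together with the $\omega$-prescribed orderings produces exactly the sign pattern of that identity, is the content of the invariance proof. That matching is not automatic: it depends on the realizability of each resolution (which in higher dimension depends on the rank of the span of the $u_i$ and of the constraint slopes, not just on the planar combinatorics), and the genericity of $\omega$ enters precisely to ensure that no $\omega(a_w,b_w)$ vanishes so that each realizable resolution carries a definite sign. Your sketch also leaves the asymmetric role of the $n_0$-end, with its extra $\rkn-2$ monomial constraints, out of the discriminant analysis; this matters because that end's constraint is pointwise rather than hyperplane-type and changes which degenerations can actually occur along a generic path. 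These are the points a complete proof must settle and they are the parts your proposal currently asserts rather than establishes.
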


\begin{rem}
\label{remark generic choice}
We can now state the genericity assumption on $\omega$: for any $\Gamma$, if $w$ is not a flat vertex, $\omega(a_w,b_w)\neq 0$.
\end{rem}

Both refined invariants are then related in the following theorem.

\begin{theom}[\ref{theorem tropical computation}]
One has the following:
\begin{itemize}[label=$\circ$]
\item If there are no complex constraints, \textit{i.e.} $S=0$, then
$$R_{\Delta,\omega}(q^{1/4})=N_{\Delta_\mathrm{trop},\omega}^{\partial,\mathrm{trop}}\in\ZZ[\Lambda^2 N].$$
\item If $S\neq 0$, let $\langle\omega,P\rangle$ denotes the evaluation of $\omega$ on the exponents vectors for a Laurent polynomial $P\in\ZZ[\Lambda^2 N]$, then
$$R_{\Delta,\omega}(q^{1/4})=(q-q^{-1})^{-S} \left\langle \frac{\omega}{2},N_{\Delta_\mathrm{trop},\omega}^{\partial,\mathrm{trop}}\right\rangle. \in\ZZ[q^{\pm 1/2}].$$
\end{itemize}
\end{theom}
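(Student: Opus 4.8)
The plan is to reduce the statement to the correspondence theorem of \cite{blomme2020computation} together with the quantum-index computation near the tropical limit, following the template already established in the planar case. First I would fix a generic configuration $(P,\mu)$ and degenerate it toward the tropical limit, so that by Theorem \ref{theorem invariance} the value of $R_{\Delta,\omega}(P,\mu)$ is unchanged while the solutions organize themselves into clusters, each cluster limiting onto a tropical curve $\Gamma$ of degree $\Delta_\mathrm{trop}$ satisfying the matching constraints. The correspondence theorem identifies, for each such $\Gamma$, the set of complex rational curves limiting onto it; the real oriented curves among them are indexed by a choice of orientation data, exactly the pairs $(\gamma,J)$ parametrizing connected components of $\MM_{0,R,S}^\tau$, localized vertex by vertex. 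The core of the argument is then purely local at each vertex $w$ of $\Gamma$.

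Next I would compute, for a single trivalent vertex $w$ with outgoing slopes $a_w,b_w$ (ordered so that $\omega(a_w,b_w)>0$), the contribution $\sum \sigma(S) q^{h(S)}$ of the handful of real oriented local curves sitting over that vertex. Because the quantum index is additive (it is an area, as stressed in the excerpt right after Theorem \ref{theorem quantum index}), $h(S)$ of a global near-tropical curve is the sum of local contributions $\pm a_w\wedge b_w$ over the vertices, the sign recording which complex orientation (i.e.\ which side $S$) is chosen at $w$; summing the two local orientation choices produces precisely the factor $q^{a_w\wedge b_w}-q^{-a_w\wedge b_w}$ up to the substitution $q\mapsto q^{1/4}$ forced by the $\frac{\pi^2}{2}$ normalization in Theorem \ref{theorem quantum index}. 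The signs $\sigma(S)$, built from $\varepsilon(\gamma)(-1)^{|J|}$ and the Jacobian of the evaluation map, must be checked to multiply correctly across vertices and to match the sign bookkeeping already done in \cite{blomme2020computation}; this uses that the chosen orientation $\oo^{\gamma,J}(\MM)$ is itself assembled from local pieces in section \ref{section orientation moduli space}. Multiplying the local vertex contributions over all vertices of $\Gamma$ recovers exactly $m_\Gamma^q$ evaluated at $q^{1/4}$, and summing over $\Gamma$ gives the first bullet.

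For the second bullet, with $S\neq 0$ complex constraints, I would run the same degeneration but now track the quantum index $k(S,\omega)$, which by the second case of Theorem \ref{theorem quantum index} is the $\omega$-log-area shifted by $\pi^2\sum_j \varepsilon_j(2\theta_j-1)$. Near the tropical limit each pair of complex conjugated marked points degenerates onto an end of $\Gamma_\mathrm{trop}$ of doubled weight $2n_j^+$; the argument $\theta_j$ of $\iota_{n_j}\omega$ at the complex puncture tends to a value that contributes, after the shift, a factor of the shape $(q-q^{-1})^{-1}$ together with an insertion of $\omega(n_j^+,\cdot)/2$ — this is exactly the origin of the $(q-q^{-1})^{-S}$ prefactor and of the pairing $\langle \omega/2,-\rangle$ applied to the totally-real tropical invariant. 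Concretely one merges the two ends, reducing to the $S=0$ tropical count for $\Delta_\mathrm{trop}$, and the discrepancy between the unmerged and merged local models at each complex end is a single universal geometric-series factor; collecting these $S$ factors yields $(q-q^{-1})^{-S}$ and the $\omega$-evaluation. The main obstacle, as in the planar work, will be the sign and orientation comparison: verifying that the canonical orientation $\oo^{\gamma,J}(\MM)$, the orientation $\oo(N)$, and $\ev^*\oo(X)$ interact with the degeneration so that the local signs assemble into the positive refined multiplicity $m_\Gamma^q$ with the correct overall sign — in particular that the $(-1)^{|J|}$ and the complex-end shift conspire so that no spurious sign survives in the $(q-q^{-1})^{-S}$ factor. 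Since the statement is asserted to follow by "a readily identical computation" to \cite{blomme2020computation}, I would carry out these local verifications in detail and invoke that reference for the global patching.
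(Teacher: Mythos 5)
Your approach coincides with the paper's: degenerate $(P,\mu)$ to the tropical limit (Theorem \ref{theorem invariance} guarantees invariance), apply the correspondence theorem of \cite{blomme2020computation}, use additivity of the quantum index to localize the contribution at each trivalent vertex so that the two orientation choices per vertex assemble into $m_\Gamma^q$, and invoke Theorem \ref{prop orientations tropical limit} for the sign comparison, with the complex case handled by tracking $k(S,\omega)$ and extracting the $(q-q^{-1})^{-S}$ factor. One small misattribution: the $q^{1/4}$ substitution comes from the curves having degree $2\Delta$ (so local quantum indices are $\pm\,2a_w\wedge 2b_w=\pm4\,a_w\wedge b_w$), not from the $\pi^2/2$ normalization in Theorem \ref{theorem quantum index}, which only relates the integer class $h(S)$ to the log-area and does not enter the exponent of $q$.
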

The $1/4$ is to account for the fact that the curves counted to define $R_{\Delta,\omega}$ are of degree $2\Delta$, while the tropical curves are of degree $\Delta$. Morally, they differ by the square map, which changes the quantum index by a value $4$.

\begin{rem}
In fact, as the tropical invariants were known before the corresponding real classical invariants, the definition of the signs is reverse-engineered: the orientations on the moduli space $\MM_{0,R,S}^\tau$ are chosen so that the sign they define close to the tropical limit coincide with the signs provided by the tropical multiplicity and the correspondence theorem. We then prove that these signs indeed provide an invariant.
\end{rem}

For now, the proof of invariance misses to include a non-generic choice of $\omega$. Such a choice provokes the appearance of \textit{walls} of new kind while checking the invariance. This is not surprising since on the tropical side, the genericity assumption on $\omega$ is already needed in \cite{blomme2020refined} to get a refined invariance with the refined multiplicity $m_\Gamma^q$. If $\omega$ is not chosen generically, one needs to consider the exponents $a_w\wedge b_w$ as living in some quotient $\Lambda^2 N/K_\Delta^\omega$, or at least evaluate the $2$-form $\omega$ used to define the enumerative problem. The same approach might provide invariance but remains to be checked.

\bigskip

The paper is organized as follows. First, we give a precise definition of the quantum index of an oriented real curve, and formulas for its computation close to the tropical limit and for real rational curves. Then, we describe the enumerative problem and the signs for which the refined signed count of solutions provides an invariant result. We then state the invariance result and its relation to previously known tropical invariants. The next section is devoted to introduce the various ingredients needed to prove the results of the paper, including the orientations on the moduli space of curves that are used in the description of the signs. For details on the tropical geometric setting of the paper, which is not the main content of the present paper, the reader is referred to \cite{blomme2020computation}.

\medskip

\textit{Acknowledgments} The author is grateful to Hulya Arguz, Pierrick Bousseau and Travis Mandel for helpful discussions and insights on the use of refined invariants in related mathematical areas.

\section{Quantum index of a real curve}
\label{section quantum index}

\subsection{Setting and definition}

Let $f:\CC C\dashrightarrow N_{\CC^*}$ be a parametrized real curve of degree $\Delta=(n_k)\subset N$, with $N$ a lattice. We assume that $C$ is of type I, meaning that the real locus $\RR C$ splits $\CC C$ into two connected components $S$ and $\overline{S}$. The choice of a component $S$ induces an orientation of the real locus $\RR C$ as the boundary of $S$. We call $S$ an \textit{oriented real curve}.

\medskip

We have the logarithmic and the argument map defined on $N_{\CC^*}$:
$$\left\{ \begin{array}{l}
\log:N_{\CC^*}\rightarrow N_\RR , \\
\arg:N_{\CC^*}\dashrightarrow N\otimes\RR/2\pi\ZZ ,\\
\end{array} \right. $$
which apply the logarithm and the argument coordinate by coordinate. The image of $\CC C$ or $S$ under these maps are called respectively the \textit{amoeba} and \textit{coamoeba} of the curve $C$.\\

Let $\varpi$ be a $2$-form on the lattice $N$. It extends respectively to a $2$-form on $N_\RR$ and $N\otimes\RR/2\pi\ZZ$ which are also denoted by the letter $\varpi$.

\begin{defi}
The area of the amoeba and coamoeba with respect to $\varpi$ are defined as follows:
$$\mathcal{A}_{\log}(S,\varpi)=\int_{S} (\log\circ f)^*\varpi \text{ and }\mathcal{A}_{\arg} (S,\varpi)=\int_{S} (\mathrm{arg}\circ f)^*\varpi.$$
\end{defi}

Using the holomorphic $2$-form on $N_{\CC^*}$ that is also defined by $\varpi$, it is classical to show that
$$\mathcal{A}_{\log}(S,\varpi)=\mathcal{A}_{\arg}(S,\varpi).$$
Moreover, as the conjugation on $\CC C$ exchanges the components $S$ and $\overline{S}$ and reverses the orientation, one has
$$\mathcal{A}_{\log}(\overline{S},\varpi)=-\mathcal{A}_{\log}(S,\varpi).$$

\begin{rem}
In this paper, we consider two $2$-forms $\omega$ and $\varpi$ which might have to coincide. The $2$-form $\omega$ is used to define the enumerative problem and the $\varpi$ is chosen and used to compute the log-area of the amoeba or coamoeba.
\end{rem}

We now define the quantum indices used to count real oriented curves solution to the $\omega$-problem. There are several definitions because the refinement that we use to count real oriented curves depends on $\omega$:
\begin{itemize}[label=$\circ$]
\item The first and most refined quantum index is some bivector in $\Lambda^2 N$. It is solely defined for oriented real curves having only real intersection with the toric boundary. It provides an invariant in the $\omega$-problem when $\omega$ is chosen generically. It enables the computation of the log-area of the curve for any choice of $2$-form $\varpi$.
\item The second quantum index is just a shift of the log-area computed with the $2$-form $\varpi=\omega$, it provides an invariant for the $\omega$-problem in presence of complex constraints.
\end{itemize}

\begin{theo}\label{theorem quantum index}
Let $S$ be a real oriented curve of degree $\Delta$.
\begin{itemize}[label=$\circ$]
\item If $S$ has only real intersection with the toric boundary, there exists a unique class $h(S)\in\Lambda^2 N$, called the quantum index, such that for any $\varpi$ one has
$$\mathcal{A}_{\log}(S,\varpi)=\frac{\pi^2}{2}\langle h(S),\varpi\rangle.$$
\item If $S$ has complex intersection with the toric boundary, let $\varepsilon_j\theta_j\pi$ be the argument of $\iota_{n_j}\omega$ evaluated at $p_j$ for some complex puncture $p_j$. Then there exists some half-integer $k(S,\omega)\in\frac{1}{2}\ZZ$ such that
$$\mathcal{A}_{\log}(S,\omega)-\pi^2\sum_j \varepsilon_j(2\theta_j-1)=\pi^2 k(S,\omega).$$
\end{itemize}
\end{theo}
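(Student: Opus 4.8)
The plan is to reduce the statement to a computation of a relative homology class, exactly as in the planar case of \cite{mikhalkin2017quantum}, but replacing the numerical area by the class it naturally lives in. The key observation is that $\mathcal{A}_{\log}(S,\varpi)$ depends on $\varpi$ only through a linear functional on $\Lambda^2 M$, hence is represented by a single element of $(\Lambda^2 M)^* = \Lambda^2 N$ once we know it is the correct (half-integer on $\Lambda^2 N$) multiple of $\pi^2/2$.

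First I would treat the totally real case. Consider the map $g = (\log\circ f, \arg\circ f) : \CC C \dashrightarrow N_\RR \times (N\otimes \RR/2\pi\ZZ) = N_{\CC^*}$ viewed as a map to the ``cylinder'', and restrict to the oriented half $S$. Near each puncture $p_k$ the curve is tangent to the toric divisor $D_{n_k}$ to order $1$ (degree $2\Delta$ means the intersection multiplicity is even, but for the class computation only the direction $n_k$ and whether the branch is real matter); since the intersection point is real, the coordinate $\iota_{n_k}\varpi$ evaluated along the branch tends to a real value, so the argument of that monomial tends to $0$ or $\pi$. This says precisely that the $1$-cycle $g(\partial S) = g(\RR C)$, pushed to $N\otimes\RR/2\pi\ZZ$, together with the arcs at infinity, closes up to give a genuine $2$-cycle in a suitable partial compactification, or equivalently that $(\log\circ f)^*\varpi$ and $(\arg\circ f)^*\varpi$ extend as forms with the integrals over small loops around punctures being integer multiples of $\pi^2$ (the coamoeba picture: the boundary argument curve has each coordinate pinned to $0$ or $\pi\bmod 2\pi$ at the punctures, so it bounds a chain whose symplectic area against $\varpi$ is $\pi^2$ times an integer pairing). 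Concretely I would: (i) show $\tfrac1{\pi^2}\mathcal A_{\arg}(S,\varpi)$ depends $\ZZ$-linearly on $\varpi\in\Lambda^2 M$ by writing the coamoeba boundary as a sum of arcs joining vertices of the cube $\{0,\pi\}^{\mathrm{rk}N}$ and using additivity of area; (ii) identify the resulting functional with pairing against a class $h(S)\in\Lambda^2 N$; (iii) get the factor $\tfrac12$ (i.e. that the value is a \emph{half}-integer times $\pi^2$, not an integer times $\pi^2$) from the fact that $S$ is only half of $\CC C$, together with $\mathcal{A}_{\log}(\overline S,\varpi) = -\mathcal{A}_{\log}(S,\varpi)$, which forces $\mathcal A_{\log}(\CC C,\varpi)\in\pi^2\ZZ$ and hence $\mathcal A_{\log}(S,\varpi)\in\tfrac{\pi^2}{2}\ZZ$. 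Uniqueness of $h(S)$ is immediate since the pairing $\Lambda^2 N\times \Lambda^2 M\to\ZZ$ is perfect.

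For the second bullet, when $S$ has complex punctures $p_j$, the argument of $\iota_{n_j}\omega$ at $p_j$ is no longer $0$ or $\pi$ but some $\varepsilon_j\theta_j\pi$ with $0<\theta_j<1$, so the coamoeba boundary curve does not close up at the cube vertices: it ends at a point displaced by $\varepsilon_j\theta_j\pi$ in the $n_j$-direction (mod the lattice). The remedy is the standard one: cap off each such loose end by a straight segment from the displaced point back to the nearest cube vertex, whose contribution to the $\varpi=\omega$-area is, by a direct triangle/wedge computation, $\pi^2\varepsilon_j(2\theta_j-1)$ up to an integer multiple of $\pi^2$ (this is exactly the planar computation in \cite{mikhalkin2017quantum}, done coordinate-wise). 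Subtracting $\pi^2\sum_j\varepsilon_j(2\theta_j-1)$ from $\mathcal A_{\log}(S,\omega)$ therefore produces an integer-or-half-integer multiple of $\pi^2$; the half-integrality again follows from the conjugation symmetry applied to the shifted quantity (the shift is odd under conjugation, $\theta_j\mapsto\theta_j$ but $\varepsilon_j\mapsto-\varepsilon_j$ is not quite right — rather the preferred side swaps which punctures are "filled", so one checks the total shift changes sign, matching $\mathcal A_{\log}(\overline S,\omega)=-\mathcal A_{\log}(S,\omega)$). Hence $k(S,\omega)\in\tfrac12\ZZ$.

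The main obstacle I anticipate is step (i)–(ii) of the real case: making rigorous that the coamoeba of $S$, as a relative $2$-chain, has a well-defined class whose boundary behaviour at the punctures is controlled purely by the tangency condition coming from degree $2\Delta$, and that the area functional is genuinely $\ZZ$-valued (not just $\QQ$-valued) on $\Lambda^2 M$. This requires a careful local analysis of the branches of $f$ at the toric boundary — choosing local coordinates in which $f$ looks like $z\mapsto (z^{2}, \text{units})$ in the $n_k$-direction — and a Stokes-type argument to replace the (possibly wild) actual coamoeba boundary by the piecewise-linear model on the cube $\{0,\pi\}^{\mathrm{rk}N}$ without changing the integral. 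The parts that are genuinely new relative to \cite{mikhalkin2017quantum} are purely bookkeeping: there the target of the functional is $1$-dimensional so no class is needed, whereas here one must keep track of the full $\Lambda^2 N$-valued data, but no new geometric input is required beyond what the planar proof already supplies.
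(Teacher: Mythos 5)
Your overall strategy — analyse the coamoeba chain, exploit that real punctures pin the boundary geodesics to fixed points of the torus involution, and deduce integrality of the area pairing; then shift/cap-off the geodesics for complex punctures — is the same as the paper's. The uniqueness via the perfect pairing is also correct. But there is a genuine gap in your step (iii), and the fix is exactly the ingredient the paper supplies that you omit.

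Your argument for the $\tfrac12$ factor is vacuous. You observe $\mathcal A_{\log}(\overline S,\varpi)=-\mathcal A_{\log}(S,\varpi)$ and deduce $\mathcal A_{\log}(\CC C,\varpi)\in\pi^2\ZZ$, but those two facts combine to give $\mathcal A_{\log}(\CC C,\varpi)=0$, which imposes no constraint whatsoever on $\mathcal A_{\log}(S,\varpi)$: every real number $x$ satisfies $x+(-x)\in\pi^2\ZZ$. The actual source of the $\tfrac12$ in the paper is neither conjugation symmetry nor a parity count on $\CC C$: one first reduces the coamoeba chain modulo $\pi$ (not $2\pi$), which already closes the full geodesics, and then passes to the quotient $K_N=\bigl(N\otimes\RR/\pi\ZZ\bigr)/\{\pm\mathrm{id}\}$. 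Because the real punctures lie over fixed points of $-\mathrm{id}$, the half-geodesics close up in $K_N$ and $\arg\circ f|_S$ becomes an honest $2$-cycle, representing an integral class in $H_2(K_N,\ZZ)$. The cohomology class induced by $\varpi$ descends since $-\mathrm{id}$ acts trivially on $H^2$. The basic coordinate $2$-torus of $N\otimes\RR/\pi\ZZ$ has $\varpi$-area $\pi^2$, and its image in $K_N$ is a degree-$2$ quotient, so the pairing of the integral class with $\varpi$ carries a factor $\tfrac{\pi^2}{2}$ — that is where the half comes from. Your cube-vertex/arc model gestures at the same picture but never performs the $\pm\mathrm{id}$ quotient, which is precisely what produces the half.

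The same objection applies to your half-integrality claim in the complex case (you yourself note the conjugation bookkeeping ``is not quite right''); again the integer structure comes from the class in $K_N$ after shifting. A secondary slip: for the complex punctures you propose capping off at the ``nearest cube vertex of $\{0,\pi\}^{\mathrm{rk}N}$'', but the relevant fixed points of $-\mathrm{id}$ in $N\otimes\RR/\pi\ZZ$ are the $\tfrac{\pi}{2}$ half-lattice points, and the paper's normalisation is specifically to shift so that $\iota_{n_j}\omega$ evaluates to $\tfrac{\pi}{2}$ there; this is what makes the correction term exactly $\pi^2\varepsilon_j(2\theta_j-1)$ rather than only well-defined up to $\pi^2\ZZ$, and it is also why only $\varpi=\omega$ is admissible in this case (the translation direction is not canonical, so only the $\omega$-area is shift-independent). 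Your ``up to an integer multiple of $\pi^2$'' caveat concedes this but should be tightened to match the statement.
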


When $\omega$ and $\Delta$ and the context are clear and specified, we drop them out of the notation and write just $k(S)$, which can then denotes $h(S)$, or the half-integer $k(S,\omega)$.

\medskip

The proof of existence is an analogue of the proof of Proposition $3$ in \cite{mikhalkin2017quantum} but in a higher dimensional version.

\begin{proof}
We have the coamoeba map that induces a chain $\arg\circ f:S\rightarrow N\otimes\RR/2\pi\ZZ$. The boundary of this chain consists of (half-)geodesics which are in correspondence with the intersection points between $S$ and the toric boundary $\CC\Delta$. The half-geodesics correspond to real intersection points and full-geodesics to complex intersection points. If we compose with the reduction modulo $\pi$ (instead of $2\pi$) we still get a cycle composed this time of full geodesics, maybe traveled several times.

\medskip

If the intersection points are all real, they are fixed by the complex conjugation. Thus, the geodesics on the boundary of the chain pass through fixed points of $-\mathrm{id}$ on $N\otimes\RR/\pi\ZZ$. It follows that the image of the chain in the quotient by $\pm\mathrm{id}$ has no boundary, just as in \cite{mikhalkin2017quantum}, and we get a cycle in the space $K_N=\left(N\otimes\RR/\pi\ZZ\right)/\{\pm\mathrm{id}\}$. This cycle realizes an integer homology class in $H_2(K_N,\ZZ)$. Meanwhile, as $-\mathrm{id}$ acts trivially on the second cohomology group, any $2$-form $\varpi$ descends into a cohomology class in $H^2(K_N,\ZZ)$. The result follows.

\medskip

If there are some complex intersection points, let $p_j^+$ be such a puncture. The corresponding geodesic does not necessarily pass through a fixed point of the complex conjugation. One thus needs to shift the geodesic so that it does so. Such a translation is not unique, and that is why we only consider the log-area for the choice $\varpi=\omega$. One translates the geodesic so that it passes through some fixed point, and the evaluation of $\iota_{n_j}\omega:N\otimes\RR/\pi\ZZ\rightarrow \RR/\pi\ZZ$ is now $\frac{\pi}{2}$. This shift modifies the log-area with respect to $\omega$ by a term $\pi^2\varepsilon_j(2\theta_j-1)$, and the log-area with respect to $\omega$ does not depend on the choice of the shift as long as its image by $\iota_{n_j}\omega$ is $\frac{\pi}{2}$.
\end{proof}

\begin{rem}
It is not possible to define a refined class in $\Lambda^2 N$ in presence of complex punctures, because contrarily to the planar case, there is no canonical direction in which to shift the complex geodesics to obtain a homology class in the quotient of the argument torus.
\end{rem}

\begin{rem}
If all intersection points with the toric boundary are real, the quantum index can appear as a generalization of the logarithmic area since in a way it computes the logarithmic area for any area form $\varpi$. It is possible to refine even more the quantum index by considering the homology class realized by $S$ in $H_2(N\otimes\RR/\pi\ZZ/\{\pm\mathrm{id}\},\ZZ)$, rather than the morphism it induces on $H^2(N\otimes\RR/\pi\ZZ,\ZZ)$. Concretely, it means not to consider the class up to torsion elements but to take care of this torsion part.
\end{rem}

\subsection{First properties}

With only real intersection points, due to its definition as a homology class in some quotient of the argument torus, the quantum index is well-behaved by transformations induced by monomial maps. If there are some complex punctures, the shift that we have to do to get the quantum index from the log-area is not wel-behaved by monomial maps, but the log-area is.

\begin{prop}
Let $\alpha:N'_{\CC^*}\rightarrow N_{\CC^*}$ be the monomial map associated to an integer matrix $A:N'\rightarrow N$, and let $f:S\dashrightarrow N'_{\CC^*}$ be an oriented real curve of degree $\Delta'$. Then, $\alpha\circ f:S\dashrightarrow N_{\CC^*}$ is a real oriented curve of degree $\Delta=A(\Delta')$. Assume $\omega'=A^*\omega$. Then we have respectively
\begin{itemize}[label=$\circ$]
\item With only real intersections,
$$h(\alpha(S))=(\Lambda^2 A)(h(S))\in\Lambda^2 N.$$
\item In presence of complex punctures, we only have
$$\mathcal{A}_{\log}(\alpha(S),\varpi)=\mathcal{A}_{\log}(S,A^*\varpi).$$
\end{itemize}
\end{prop}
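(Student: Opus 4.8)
The plan is to reduce everything to the functoriality of the two constructions from the proof of Theorem \ref{theorem quantum index}: the homology class in $K_N=(N\otimes\RR/\pi\ZZ)/\{\pm\mathrm{id}\}$ in the totally real case, and the log-area with respect to a chosen $2$-form in the general case. The first observation is purely set-theoretic: if $f:S\dashrightarrow N'_{\CC^*}$ is a parametrized real curve of degree $\Delta'=(n'_k)$, then $\alpha\circ f$ is again parametrized by the same type I surface with the same preferred component $S$ (since $\alpha$ commutes with complex conjugation, being a monomial map with integer exponents), so $\alpha(S)$ is a bona fide oriented real curve; its degree is $A(\Delta')=(An'_k)$ by reading off the orders of vanishing along toric divisors, which transform by $A$. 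One must note that $\alpha$ need not be injective on the torus, so some ends of $\alpha\circ f$ may become interior or cancel, but this only affects the multiset $\Delta$ and not the oriented surface, and in any case $\Lambda^2 A$ is well-defined on $\Lambda^2 N'$ regardless.

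For the first bullet, I would argue as follows. The matrix $A$ induces a map $N'\otimes\RR/\pi\ZZ\to N\otimes\RR/\pi\ZZ$ commuting with $-\mathrm{id}$ on both sides, hence a map $\bar A:K_{N'}\to K_N$. The coamoeba chain of $\alpha\circ f$ is literally $\bar A$ composed with the coamoeba chain of $f$ (this is exactly the statement that $\arg$ is functorial for monomial maps: $\arg\circ\alpha = A\circ\arg$ coordinatewise). In the totally real case both chains are cycles, and so $[\arg\circ(\alpha\circ f)] = \bar A_*[\arg\circ f]$ in $H_2(K_N,\ZZ)$. Now for any $\varpi\in\Lambda^2 M$ we have, by the definition of $h$ via Theorem \ref{theorem quantum index}, the projection formula, and functoriality of the area integral,
$$\frac{\pi^2}{2}\langle h(\alpha(S)),\varpi\rangle = \mathcal{A}_{\log}(\alpha(S),\varpi) = \int_S (\log\circ\alpha\circ f)^*\varpi = \int_S (\log\circ f)^*(A^*\varpi) = \mathcal{A}_{\log}(S,A^*\varpi) = \frac{\pi^2}{2}\langle h(S),A^*\varpi\rangle.$$
Since $\langle h(S),A^*\varpi\rangle = \langle (\Lambda^2 A)(h(S)),\varpi\rangle$ and this holds for every $\varpi$, uniqueness of the quantum index (the $\Lambda^2 N$-class it represents is determined by its pairing with all $2$-forms, up to the torsion caveat which I would either ignore or handle by noting $\bar A_*$ is defined on the full $H_2$) gives $h(\alpha(S)) = (\Lambda^2 A)(h(S))$.

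For the second bullet the homological argument is unavailable — there is no canonical shift of the complex geodesics that is natural under $A$ — but the log-area identity $\mathcal{A}_{\log}(\alpha(S),\varpi) = \mathcal{A}_{\log}(S,A^*\varpi)$ is exactly the middle of the display above, which needs no type I cycle structure: it is just the change-of-variables $(\log\circ\alpha\circ f)^*\varpi = (\log\circ f)^*(A^*\varpi)$ integrated over $S$, valid because $\log\circ\alpha = A\circ\log$ as maps $N'_{\CC^*}\dashrightarrow N_\RR$. I expect the only genuinely delicate point to be bookkeeping about degrees when $A$ is not injective — making sure ends that collapse do not contribute spurious boundary to the coamoeba chains — but this is controlled since $\log$ and $\arg$ still extend continuously and the chain identity $\arg\circ(\alpha\circ f) = \bar A\circ(\arg\circ f)$ holds on the nose; everything else is the projection formula applied twice.
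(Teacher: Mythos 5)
Your proof is correct and the core of it is the same as the paper's: both arguments rest on the change-of-variables identity $\log\circ\alpha = A\circ\log$, giving $\mathcal{A}_{\log}(\alpha(S),\varpi)=\mathcal{A}_{\log}(S,A^*\varpi)$, and then in the totally real case apply the defining relation $\mathcal{A}_{\log}(\cdot,\varpi)=\frac{\pi^2}{2}\langle h(\cdot),\varpi\rangle$ together with the adjunction $\langle h(S),A^*\varpi\rangle=\langle(\Lambda^2 A)h(S),\varpi\rangle$. The homological framing via $\bar A_*:H_2(K_{N'})\to H_2(K_N)$ you add is a valid alternative viewpoint but not needed once $h$ is characterized by its pairing with all $\varpi\in\Lambda^2 M$.
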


\begin{rem}
The last statement is a generalization of the result from \cite{blomme2020computation}. The difference is that in \cite{blomme2020computation} $\omega$ and $\omega'$ were chosen to be generators of $\Lambda^2 M$, and thus the relation between $\omega$ and $\omega'$ was rather $A^*\omega=(\det A)\omega'$.
\end{rem}

\begin{proof}
It follows from the equalities:
\begin{align*}
\mathcal{A}_{\log}(\alpha(S),\varpi) &  = \int_{S}(\log\circ\alpha\circ f)^*\varpi \\
& = \int_{S}(A\circ\log\circ f)^*\varpi \\
& = \int_{S}(\log\circ f)^*A^*\varpi.\\
\end{align*}
If there are only real punctures, then one has in particular,
$$\mathcal{A}_{\log}(\alpha(S),\varpi)=\frac{\pi^2}{2}\langle h(S),A^*\varpi\rangle=\frac{\pi^2}{2}\langle (\Lambda^2 A)h(S),\varpi\rangle.$$
\end{proof}

Just as in the planar case from \cite{mikhalkin2017quantum}, it is possible to compute the quantum index of curves in a family close to its tropical limit. The additive nature of the logarithmic area ensures that the quantum index close to the tropical limit splits itself as a sum over the vertices of the tropical curve. For more details on the tropical limit, see \cite{blomme2020computation}.

\begin{prop}
Let $f_t:\CC C_t\dashrightarrow N_{\CC((t))^*}$ be a family of type I real curves, a choice of orientation $S_t$, and converging to a tropical limit $h:\Gamma\rightarrow N_\RR$. For each real vertex $w$ of $\Gamma$, let $f_w:\CC C_w\dashrightarrow N_{\CC^*}$ be the associated real curve with an induced orientation $S_w$. Then close to the tropical limit, one has
$$\mathcal{A}_\mathrm{log}(S_t,\varpi)=\sum_w \mathcal{A}_\mathrm{log}(S_w,\varpi),$$
where the sum is over the real vertices of $\Gamma$.
\end{prop}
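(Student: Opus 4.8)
The statement asserts additivity of the log-area along a degeneration to the tropical limit. The plan is to exploit the integral definition of $\mathcal{A}_{\log}$ together with the local picture of the degeneration, as in the tropical correspondence setup of \cite{blomme2020computation}. First I would recall that as $t\to 0$, the Riemann surface $\CC C_t$ decomposes (in the sense of a stable limit / nodal degeneration) into pieces indexed by the vertices $w$ of the tropical curve $\Gamma$: each vertex $w$ contributes a component $\CC C_w$ and each bounded edge contributes a vanishing cycle (a node). The choice of orientation $S_t$ restricts, on each vertex component, to an orientation $S_w$; this is where one must check that type I and the complex orientation are inherited by the limit pieces, which is part of the tropical limit formalism recalled from \cite{blomme2020computation}. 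The marked/boundary structure also distributes: the unbounded edges of $\Gamma$ give the intersections with $\CC\Delta$ on the respective $\CC C_w$.

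Next I would split the integral. Up to choosing an exhaustion of $S_t$ by the regions converging to the vertex components and the (shrinking) annular neighborhoods of the nodes, one writes
$$\mathcal{A}_{\log}(S_t,\varpi)=\sum_w \int_{S_t \cap U_w}(\log\circ f_t)^*\varpi + \sum_{\text{edges }e}\int_{A_e}(\log\circ f_t)^*\varpi,$$
where $U_w$ is the portion of $S_t$ over the $w$-component and $A_e$ is the annulus over the node corresponding to $e$. The first batch of integrals converges to $\mathcal{A}_{\log}(S_w,\varpi)$ as $t\to 0$, because on each $U_w$ the rescaled family $f_t$ converges (after the appropriate monomial rescaling by the vertex position $h(w)$, which leaves $(\log\circ f)^*\varpi$ invariant since $\varpi$ is translation-invariant on $N_\RR$) to $f_w$, uniformly on compact sets, and the pullback $2$-form converges with it. The key point is then that the annular (node) contributions vanish in the limit: near a node the map $f_t$ looks, in suitable coordinates, like $(z,w)\mapsto$ a monomial with $zw = t^{(\text{something})}$, so the amoeba of $A_e$ degenerates onto a segment (the edge $e$ traversed with its weight), which has zero area; more precisely $\int_{A_e}(\log\circ f_t)^*\varpi \to 0$ since the image is collapsing to a one-dimensional set and the integrand is bounded. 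Since each side of the claimed equality is, by Theorem~\ref{theorem quantum index} (in the totally real case) or by its half-integrality statement (in general), locally constant in $t$ near $0$ — the quantum index / shifted log-area takes discrete values and varies continuously — the limiting equality upgrades to an honest equality for $t$ small enough, which is the assertion.

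I would organize the write-up as: (1) recall the tropical limit decomposition and the induced data $(S_w, \Delta_w)$ from \cite{blomme2020computation}; (2) set up the exhaustion of $S_t$ into vertex regions and node annuli; (3) prove convergence of the vertex integrals via the translation-invariance of $\varpi$ and uniform convergence of the rescaled maps; (4) prove vanishing of the node-annulus integrals via the collapse of their amoebae onto segments; (5) conclude using discreteness of the quantum index to pass from a limit identity to an identity near the tropical limit.

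The main obstacle I expect is step (4): making rigorous that the contributions of the shrinking annuli around the nodes genuinely vanish rather than surviving as boundary defects. One has to be careful that the integrand $(\log\circ f_t)^*\varpi$, while pulled back from a collapsing region, need not be pointwise small — what is small is the \emph{measure} of the amoeba image — so the argument must be phrased as: the area of the amoeba of $A_e$ is at most (length of edge image)$\times$(transverse width)$\to 0$, using that $f_t$ on $A_e$ is approximated by a one-parameter monomial family. Equivalently, one can argue cohomologically: on the normalized limit the coamoeba chains glue along the geodesics coming from the nodes, and these gluing geodesics are one-dimensional, hence contribute nothing to the $2$-form integral — this is exactly the mechanism used in the existence proof of Theorem~\ref{theorem quantum index} above, and reusing it keeps the argument short. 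A secondary subtlety is the bookkeeping of orientations when regluing the $S_w$ into $S_t$, so that no sign discrepancies appear in the sum; this is handled by the consistent choice of complex orientations in the tropical limit and does not affect the area identity, only its eventual use in the sign computations later in the paper.
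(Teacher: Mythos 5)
Your proposal follows the same overall strategy as the paper's (remarkably terse, two-sentence) proof: split the integral of $(\log\circ f_t)^*\varpi$ over the degenerating surface $S_t$ into vertex regions and node annuli, argue that the vertex regions converge to the limit integrals while the annular amoebae collapse onto one-dimensional sets, and upgrade the resulting limit identity to an actual equality for small $t$ via continuity plus discreteness of the (shifted) log-area. Your exhaustion argument and the translation-invariance observation for the rescaling by $t^{h(w)}$ are exactly the content hidden behind the paper's phrase ``the additivity of the integral.''

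One point you do not address, and that should be made explicit, is that your decomposition ranges over all vertices $w$ of $\Gamma$, whereas the statement sums only over the real ones (those fixed by the involution $\sigma$). For a pair of complex-conjugate vertices $(w,\sigma(w))$, the involution on $\CC C_t$ swaps the corresponding components, so the portion of $S_t$ lying over the pair degenerates not to an oriented ``half'' of a real curve but to the entire component $\CC C_w$ (and nothing of $\CC C_{\sigma(w)}$); in particular $S_w$ is not defined and your sentence ``the choice of orientation $S_t$ restricts, on each vertex component, to an orientation $S_w$'' fails there. You therefore need the contribution $\int_{\CC C_w}(\log\circ f_w)^*\varpi$ of a complex vertex to vanish. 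It does, but by a specific feature of the setting: since $\Gamma/\sigma$ is trivalent, any vertex of $\Gamma$ \emph{not} fixed by $\sigma$ is itself trivalent, so $\CC C_w\simeq\CC P^1$ has exactly three punctures, which can be sent to $\{0,1,\infty\}$ by a M\"obius reparametrization; the resulting map differs from a real one only by a torus translation, which does not change the log-area because $\varpi$ is translation-invariant on $N_\RR$, and the log-area of a \emph{complete} real rational curve is zero by the conjugation identity $\mathcal{A}_{\log}(\overline{S},\varpi)=-\mathcal{A}_{\log}(S,\varpi)$. Without this trivalence argument one cannot simply drop the complex vertices from the sum, so it is a genuine step that your plan should record, even though it is short to fill.
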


\begin{rem}
Recall that the real vertices of $\Gamma$ are the vertices which are fixed by the involution on $\Gamma$ induced by the complex conjugation, \textit{i.e.} the vertices corresponding to real components in the nodal fiber of a stable model of $\CC C_t$. These vertices might not always correspond to curves having only real intersection with the toric boundary, and that is why we may not speak freely of their quantum index. It is the case if the curves in the family are rational curves having real intersection points with the toric boundary.
\end{rem}

\begin{proof}
The quantum index is continuous and takes discrete values, it is thus constant near the tropical limit. The additivity of the integral yields the result.
\end{proof}

\subsection{Computation}

\subsubsection{For toric type I curves}

We can compute the quantum index for any real curve that is of \textit{toric} type I. These are curves for which the image of $H_1(S,\ZZ)$ inside $H_1(N_{\CC^*},\ZZ)\simeq N$ is $\{0\}$. For more details, see \cite{mikhalkin2017quantum}. In particular, those curves have only real intersection points with the toric boundary of $\CC\Delta$.

\begin{theo}\label{theorem computation toric type I}
Let $f:\CC C\dashrightarrow N_{\CC^*}$ be a toric type I real curve with $S\subset\CC C$ a choice of orientation. Let $(\RR C^{(k)})_k$ be the collection of the real components of the real part $\RR C$, and for each $k$ let $(n_i^{(k)})_i$ be the collection of cocharacters associated to the boundary points on $\RR C^{(k)}$ in the cyclic order induced by the orientation on the curve. Then, for each $k$, $\sum_i n^{(k)}_i=0$, and one has
$$h(S)=\sum_k \sum_{i<j} n^{(k)}_i\wedge n^{(k)}_j,\in\Lambda^2 N$$
where $\sum_{i<j}$ is made choosing any starting point to make the cyclic order into a true order.
\end{theo}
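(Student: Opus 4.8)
The plan is to reduce the statement, via Theorem~\ref{theorem quantum index}, to an explicit computation of the coamoeba area, and then to use the \emph{toric} type~I hypothesis to lift the argument map to the contractible space $N\otimes\RR$, where the integral becomes an elementary polygonal computation. So first, by Theorem~\ref{theorem quantum index} together with $\mathcal{A}_{\log}(S,\varpi)=\mathcal{A}_{\arg}(S,\varpi)$, it suffices to prove that for every $2$-form $\varpi$ on $N$ one has $\mathcal{A}_{\arg}(S,\varpi)=\tfrac{\pi^2}{2}\big\langle \sum_k\sum_{i<j} n_i^{(k)}\wedge n_j^{(k)},\varpi\big\rangle$. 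I may treat each connected component of $S$ separately, so I assume $S$ connected with boundary circles $\RR C^{(1)},\dots,\RR C^{(m)}$. Write $T=N\otimes\RR/2\pi\ZZ$, so that $\arg\colon N_{\CC^*}\to T$ is a homotopy equivalence and identifies $H_1(N_{\CC^*})$ with $H_1(T)=N$. The toric type~I condition is that $f_*H_1(S;\ZZ)=0$ inside $H_1(N_{\CC^*})=N$; composing with $\arg$, this gives $(\arg\circ f)_*\pi_1(S)=0$ in $\pi_1(T)=N$, so $\arg\circ f$ lifts to a continuous $g\colon S\to N\otimes\RR$ along the universal cover $N\otimes\RR\to T$. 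Since $\varpi$ is translation-invariant, $(\arg\circ f)^*\varpi=g^*\varpi$, and hence $\mathcal{A}_{\arg}(S,\varpi)=\int_S g^*\varpi$.

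Next I would analyse $g$ near the punctures. At a puncture $p_i$ on $\RR C^{(k)}$, pick a local coordinate $u$ vanishing at $p_i$, real on $\RR C^{(k)}$, with $S=\{\mathrm{Im}\,u>0\}$ locally; there $f=u^{n_i}\cdot(\text{unit})$, so along the small upper semicircle $\{|u|=\delta\}$ joining the two sides of $\RR C^{(k)}$ the map $\arg\circ f$, hence $g$, runs over a geodesic segment of displacement vector $\pi n_i^{(k)}$, whereas between consecutive punctures $\arg\circ f$ is locally constant (it is the argument of a real point), so $g$ is constant there. Therefore, excising a small half-disk around each puncture, $g$ sends the resulting boundary circle lying over $\RR C^{(k)}$ onto a broken geodesic loop with successive displacement vectors $\pi n_1^{(k)},\dots,\pi n_{r_k}^{(k)}$; being a genuine loop, its total displacement vanishes, which is exactly the asserted relation $\sum_i n_i^{(k)}=0$.

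Finally, let $S^\circ$ be the compact surface obtained from $S$ by removing a small half-disk around each puncture, whose boundary circles are the loops $\widehat{\RR C^{(k)}}$ above. The integrand $g^*\varpi$ is integrable near the punctures (the only singular contribution is of size $|u|^{-1}$, integrable in two real variables), so $\int_S g^*\varpi=\lim\int_{S^\circ}g^*\varpi$ as the half-disks shrink. Let $c'=\sum_k c'_k$, where $c'_k$ is the $2$-chain obtained by coning $g(\widehat{\RR C^{(k)}})$ from the origin (after translating that loop to start at $0$). Then $\partial c'=\partial\big(g[S^\circ]\big)$, so $g[S^\circ]-c'$ is a $2$-cycle in the contractible space $N\otimes\RR$, hence null-homologous, and $\int_{g[S^\circ]}\varpi=\int_{c'}\varpi$ by Stokes. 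The cone over the polygon $0,\pi n_1^{(k)},\pi(n_1^{(k)}+n_2^{(k)}),\dots$ has $\varpi$-area $\sum_j\tfrac12\varpi\big(\pi\sum_{i<j}n_i^{(k)},\pi n_j^{(k)}\big)=\tfrac{\pi^2}{2}\sum_{i<j}\varpi(n_i^{(k)},n_j^{(k)})$, independent of the origin and of the chosen starting vertex (using $\sum_i n_i^{(k)}=0$ and translation invariance of $\varpi$). Summing over $k$, and over the components of $S$, gives the formula.

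The step I expect to be the main obstacle is this last one: one must carefully justify the excision/limit at the punctures, where $\RR C$ is not literally a closed loop and $g$ has a mild gradient singularity, and one must track orientations so that $c'$ is attached to $g[S^\circ]$ with the orientation inherited from $S$. It is precisely this bookkeeping that pins down $\sum_{i<j}n_i^{(k)}\wedge n_j^{(k)}$ --- with the cyclic order coming from the complex orientation of the curve --- rather than its opposite, and that matches the factor $\pi$ in the displacement vectors to the fact that $S$ is exactly one of the two halves of $\CC C$.
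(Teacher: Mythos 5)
Your proposal is correct and follows essentially the same route as the paper's proof: use the toric type~I hypothesis to kill $f_*$ on $\pi_1$ and lift the argument map to the contractible cover $N\otimes\RR$, observe that the boundary of the lifted surface is a broken geodesic polygon with edge vectors $\pi n_i^{(k)}$, and then invoke Stokes' theorem (exactness of $\varpi$ on $N_\RR$) together with a triangulation or coning of the polygon to reduce the area integral to $\frac{\pi^2}{2}\sum_{i<j}\varpi(n_i^{(k)},n_j^{(k)})$. The only difference is that you carry out in detail the excision near punctures, the integrability of $g^*\varpi$, and the coning computation that the paper delegates to the references.
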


\begin{proof}
The relation $\sum_i n^{(k)}_i=0$ immediately follows from the facts that the morphism
$$f_*:H_1(S)\rightarrow H_1(N_{\CC^*},\ZZ)\simeq N,$$
is zero, and that $\sum_i n^{(k)}_i$ is the image of the class realized by $\RR C^{(k)}$ inside $S$. This ensures that the value of $\sum_{i<j} n^{(k)}_i\wedge n^{(k)}_j$ does not depend on the chosen starting point.
\medskip
To compute the quantum index, we use the fact that the triviality of the morphism in homology groups implies that the morphism between fundamental groups $\pi_1(S)\rightarrow \pi_1(N_{\CC^*})$ is also trivial, since $\pi_1(N_{\CC^*})\simeq H_1(N_{\CC^*})$. In particular, this allows us to lift the map $\left. 2\arg \right|_S:S\rightarrow N\otimes\RR/\pi\ZZ$ to its universal cover $N_\RR$. We get a map $\tilde{f}:S\rightarrow N_\RR$. The image $\tilde{f}(S)$ is some surface whose boundary consists of some segments linking points from the lattice $\pi N$ in $N_\RR$.
\medskip
Then, for any choice of $\varpi$, as $\varpi$ is exact on $N_\RR$, the integral $\int_{\tilde{f}(S)}\varpi$ does only depend on the boundary of $\tilde{f}(S)$. Then, one can equivalently integrate a primitive of $\varpi$ on $\partial\tilde{f}(S)$ or choose another surface with boundary $\partial\left(\tilde{f}(S)\right)$, \textit{e.g.} triangulating the polygons $\tilde{f}(\RR C^{(k)})$, to yield the formula. (See \cite{blomme2020phd} or \cite{blomme2020computation}.)
\end{proof}

As disks are contractible, the rational curves are examples of toric type I curves if they have only real intersection points with the toric boundary, and their quantum index is computed by Theorem \ref{theorem computation toric type I}. The Theorem also allows for the following immediate corollary, which enables for the computation of the quantum index near the tropical limit, provided that the limit tropical curve is trivalent, and has trivial real structure.

\begin{coro}
For a real oriented rational curve with three boundary points, with associated cocharacters $n_1,n_2,n_3$, indexed in the cyclic order induced by the orientation, one has
$$h(S)=n_1\wedge n_2=n_2\wedge n_3=n_3\wedge n_1.$$
\end{coro}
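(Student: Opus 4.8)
The plan is to deduce this directly from Theorem \ref{theorem computation toric type I} applied to the special case of a rational curve with exactly three boundary points. A real oriented rational curve has $\CC C = \CC P^1$ and $S$ a disk, so $H_1(S,\ZZ) = 0$; hence the curve is automatically of toric type I (as already noted in the text right after the theorem), and Theorem \ref{theorem computation toric type I} applies. With three boundary points, all of them real, there is a single real component $\RR C^{(1)} = \RR P^1$, carrying the three cocharacters $n_1, n_2, n_3$ in the cyclic order induced by the orientation. The theorem then gives $h(S) = \sum_{i<j} n_i \wedge n_j = n_1\wedge n_2 + n_1 \wedge n_3 + n_2\wedge n_3$, where the order is obtained from the cyclic order by picking any starting point.

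The remaining step is purely algebraic: to show that this expression equals each of $n_1\wedge n_2$, $n_2\wedge n_3$, and $n_3\wedge n_1$. First I would invoke the balancing relation $n_1 + n_2 + n_3 = 0$, which is part of the conclusion of Theorem \ref{theorem computation toric type I} (the image in $H_1(N_{\CC^*},\ZZ)\simeq N$ of the class of $\RR C^{(1)}$ vanishes). Substituting $n_3 = -n_1 - n_2$ into $n_1\wedge n_2 + n_1\wedge n_3 + n_2\wedge n_3$ and expanding using bilinearity and antisymmetry of the wedge, the cross terms cancel and one is left with $n_1 \wedge n_2$. By the cyclic symmetry of both the hypothesis ($n_1+n_2+n_3=0$ is symmetric) and the three-term sum, the same substitution with $n_1 = -n_2-n_3$ gives $n_2\wedge n_3$, and with $n_2 = -n_3 - n_1$ gives $n_3\wedge n_1$. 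Equivalently, one checks directly from $n_1+n_2+n_3 = 0$ that $n_1\wedge n_2 = n_2\wedge n_3 = n_3\wedge n_1$ by wedging the relation with each $n_i$ in turn (e.g. $0 = n_1 \wedge (n_1+n_2+n_3) = n_1\wedge n_2 + n_1\wedge n_3$, so $n_1\wedge n_2 = n_3 \wedge n_1$), and then the common value is $\tfrac{1}{3}$ of the three-term sum, hence equals it.

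There is essentially no obstacle here; this is a formal corollary. The only point requiring the slightest care is bookkeeping with the cyclic versus linear order when reading off $\sum_{i<j} n_i^{(k)}\wedge n_j^{(k)}$ from Theorem \ref{theorem computation toric type I}, but the theorem already guarantees the result is independent of the chosen starting point precisely because $\sum_i n_i^{(k)} = 0$, so any reading gives the same answer.
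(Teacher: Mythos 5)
Your proof is correct and matches the paper's intent: the paper labels this an ``immediate corollary'' of Theorem~\ref{theorem computation toric type I}, and you spell out precisely the intended argument, namely that $h(S)=n_1\wedge n_2+n_1\wedge n_3+n_2\wedge n_3$ combined with $n_1+n_2+n_3=0$ forces the three pairwise wedges to be equal and to coincide with their sum.
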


Moreover, for rational curves with only real intersection points with the boundary, we have the following remarkable property.

\begin{coro}
For an oriented real rational curve having only real intersection points with the toric boundary, the quantum index does only depend on the order in which the curve intersects the toric divisors.
\end{coro}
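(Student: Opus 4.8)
The plan is to deduce this directly from Theorem \ref{theorem computation toric type I}. First I would recall that, as already observed just after that theorem, an oriented real rational curve having only real intersection points with the toric boundary is a toric type I curve, so the computation of Theorem \ref{theorem computation toric type I} applies to it without change.

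Next I would use that $\CC C=\CC P^1$. Since the curve has at least one real boundary point its real locus $\RR C$ is nonempty, and a nonempty real form of $\CC P^1$ is $\RR P^1$, which is connected. Hence there is a single real component $\RR C^{(1)}=\RR C$ and the sum over $k$ in the formula of Theorem \ref{theorem computation toric type I} collapses to one term:
$$h(S)=\sum_{i<j} n_i\wedge n_j\in\Lambda^2 N,$$
where $n_1,\dots,n_{|\Delta|}$ is the full list of cocharacters of the boundary points, written in the cyclic order in which the points occur along $\RR P^1$ for the orientation induced by $S$, and where one picks any starting point to turn the cyclic order into a genuine order. This expression is independent of the chosen starting point because $\sum_i n_i=0$ (the total sum of $\Delta$ vanishes), exactly as in the proof of Theorem \ref{theorem computation toric type I}.

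Finally I would observe that the cyclic datum appearing above is, by definition, the order in which the curve meets the toric divisors, each $n_i$ recording which toric divisor carries the $i$-th boundary point; and that $h(S)$ has just been written as a function of this cyclic datum alone, no further geometric information about $f$ entering. This proves the corollary. The only point that requires a little care, and the one I would emphasize, is the reduction to a single real component, i.e. the remark that $\RR C$ is connected in this situation; once that is in place the statement is an immediate reading of the formula in Theorem \ref{theorem computation toric type I}.
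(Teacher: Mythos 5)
Your proof is correct and follows the same route the paper intends when it calls the corollary ``immediate'': the curve is toric type I, so Theorem~\ref{theorem computation toric type I} applies, and the formula $h(S)=\sum_k\sum_{i<j}n_i^{(k)}\wedge n_j^{(k)}$ visibly depends only on the cyclic orders of the cocharacters on the real components. The one detail you spell out that the paper leaves tacit is the reduction to a single real component: since $\CC C=\CC P^1$ and there is at least one real boundary point, $\RR C=\RR P^1$ is connected, so the outer sum over $k$ has one term and the only datum left is a single cyclic order on all the $n_i$, i.e.\ the order of intersection with the toric divisors. This is exactly the right point to emphasize, since for curves of higher genus with several real ovals the quantum index would also depend on how the boundary points distribute among the ovals, not just on their overall cyclic order.
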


\subsubsection{For rational curves}

For a rational curve with complex punctures, the computation from \cite{blomme2020computation} remains valid in higher dimension for the computation of the log-area with the $2$-form $\omega$, yielding the following result. Let us be given a parametrized oriented real rational curve
$$f:y\in\CC P^1\longmapsto \chi\prod_1^r (y-\alpha_i)^{n_i}\prod_1^s (y^2-2y\re\beta_j +|\beta_j|^2)^{n'_j}\in N_{\CC^*}.$$

\begin{theo}
\label{log-area rational curve}
If $r\geqslant 1$, assume that $\alpha_r=\infty$, and let $S$ be the upper hemisphere of $\CC P^1$. The logarithmic area is given by
$$\begin{array}{rcl}
\mathcal{A}_{\log}(S,\omega)= & & \sum_{i<i'}\omega(n_i,n_{i'})\frac{\pi^2}{2}\mathrm{sgn}(\alpha_{i'}-\alpha_i) \\
& + & \sum_{i,j}\omega(n_i,n'_{j'})2\pi\arctan\left( \frac{\alpha_i-\Re\beta_j}{\Im\beta_j} \right) \\
	& + & \sum_{j<j'}\omega(n'_j,n'_j)4\pi\arctan\left( \frac{\Re\beta_{j'}-\Re\beta_j}{\Im\beta_{j'}+\Im\beta_j} \right).\\
\end{array}$$
\end{theo}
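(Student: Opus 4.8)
The plan is to compute the logarithmic area as a sum of contributions governed by the monomial expansion of $f$. Since $\mathcal{A}_{\log}(S,\omega) = \mathcal{A}_{\arg}(S,\omega) = \int_S(\arg\circ f)^*\omega$, and $\omega$ is bilinear, I would write $\arg\circ f = \sum_i n_i\,\theta_i + \sum_j n'_j\,\varphi_j$ where $\theta_i(y)=\arg(y-\alpha_i)$ and $\varphi_j(y)=\arg(y^2-2y\,\Re\beta_j+|\beta_j|^2)$, both viewed as functions $\CC P^1\setminus\{\text{punctures}\}\to\RR/2\pi\ZZ$; the constant $\chi$ contributes nothing since $\arg\chi$ is constant and hence has zero differential. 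Then $(\arg\circ f)^*\omega$ pulls back to $\sum_{i<i'}\omega(n_i,n_{i'})\,\dd\theta_i\wedge\dd\theta_{i'} + \sum_{i,j}\omega(n_i,n'_j)\,\dd\theta_i\wedge\dd\varphi_j + \sum_{j<j'}\omega(n'_j,n'_{j'})\,\dd\varphi_j\wedge\dd\varphi_{j'}$ (diagonal terms vanish by antisymmetry), so the theorem reduces to evaluating the three families of integrals $\int_S\dd\theta_i\wedge\dd\theta_{i'}$, $\int_S\dd\theta_i\wedge\dd\varphi_j$, and $\int_S\dd\varphi_j\wedge\dd\varphi_{j'}$ over the upper hemisphere $S$.

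Each of these is a purely two-dimensional integral on the disk $S$ whose boundary is $\RR P^1=\RR\cup\{\infty\}$, so I would use Stokes: $\int_S\dd\theta_a\wedge\dd\theta_b=\int_S\dd(\theta_a\,\dd\theta_b)=\int_{\partial S}\theta_a\,\dd\theta_b$, being careful about the half-geodesic/full-geodesic structure at the punctures exactly as in the proof of Theorem \ref{theorem quantum index}. On the real boundary $\RR P^1$, $\theta_i=\arg(y-\alpha_i)$ is $0$ for $y>\alpha_i$ and $\pi$ for $y<\alpha_i$ (it is locally constant away from $\alpha_i$), so $\int_{\partial S}\theta_i\,\dd\theta_{i'}$ collapses to a finite sum of jumps: one gets $\pm\pi^2$ according to $\mathrm{sgn}(\alpha_{i'}-\alpha_i)$, which after the symmetrization in $i<i'$ and the factor conventions yields the first line. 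For the mixed term, $\varphi_j$ restricted to the real line is a genuine arctangent-type function (its real argument $y^2-2y\Re\beta_j+|\beta_j|^2$ has no real root since $\Im\beta_j\neq 0$), and $\int_{\partial S}\theta_i\,\dd\varphi_j$ evaluates, after the jump of $\theta_i$ at $\alpha_i$ contributes a factor $\pi$, to $2\pi$ times the total variation of $\varphi_j$ over the relevant half-line, i.e. $2\pi\arctan\!\big(\frac{\alpha_i-\Re\beta_j}{\Im\beta_j}\big)$ up to the additive constant fixed by the puncture-at-infinity normalization. The last term, $\int_{\partial S}\varphi_j\,\dd\varphi_{j'}$, is handled the same way but now both functions vary continuously; the integral is computed by the standard identity for $\int_\RR \arg(y-\beta_j)\,\dd\arg(y-\beta_{j'})$-type expressions, giving $4\pi\arctan\!\big(\frac{\Re\beta_{j'}-\Re\beta_j}{\Im\beta_{j'}+\Im\beta_j}\big)$ (the $4$ because $\varphi$ is the argument of a degree-two factor, i.e. twice the argument of a linear factor over $\CC$, and a sum of two such arctangent contributions combines via the arctangent addition formula).

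The bookkeeping at the punctures is where I expect the real work to lie: the chain $\arg\circ f|_S$ does not literally have boundary on $\RR P^1$ but rather its boundary consists of geodesics in the argument torus (half-geodesics at real punctures, full geodesics at complex ones), so "applying Stokes" means keeping track of these boundary geodesics and the winding they contribute, precisely the mechanism already used in the proof of Theorem \ref{theorem quantum index}. Concretely the subtlety is fixing the right representatives of the multivalued functions $\theta_i,\varphi_j$ and the correct orientation of $\partial S$, and checking that the normalization $\alpha_r=\infty$ together with the choice of $S$ as the upper hemisphere makes all the constants of integration cancel. Once that is pinned down — and it is the higher-dimensional transcription of the computation in \cite{blomme2020computation}, which carries over verbatim since $\omega$ only enters through the scalar coefficients $\omega(n_i,n_{i'})$ etc. — the three elementary integrals assemble into the displayed formula.
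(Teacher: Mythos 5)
Your overall strategy is the same one the paper uses: exploit the bilinearity of $\omega$ (via the factorization of $f$ into linear and quadratic factors) to decompose $(\arg\circ f)^*\omega$ into pairwise wedge products, so that everything reduces to three families of two-dimensional integrals --- real-real, real-complex, and complex-complex --- each of which is the log-area of a planar model curve. The paper phrases this as a monomial map $\alpha:(\CC^*)^{r+s}\to N_{\CC^*}$ sending a ``universal'' curve to $C$, pulling back $\omega$, decomposing in the basis $e_i^*\wedge e_j^*$, and projecting to pairs of coordinates, but that is exactly your decomposition written in different clothing; the three planar models (line, parabola, conic tangent to one axis) are your three integral types.

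One specific step in your sketch would, if taken literally, give the wrong answer. For the mixed term you write that ``$\varphi_j$ restricted to the real line is a genuine arctangent-type function'' and that $\int_{\partial S}\theta_i\,\dd\varphi_j$ evaluates to $2\pi$ times the total variation of $\varphi_j$ over a half-line. But for real $y$ the quadratic factor $y^2-2y\Re\beta_j+|\beta_j|^2=(y-\beta_j)(y-\bar\beta_j)=|y-\beta_j|^2$ is a positive real number, so $\varphi_j\equiv 0$ on $\RR P^1$ and its total variation along the real boundary is zero; the naive Stokes argument on the hemisphere with boundary $\RR P^1$ therefore produces $0$ for the mixed term. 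The arctangent actually comes from the \emph{interior} puncture: $\beta_j$ lies in the upper hemisphere $S$, so $\varphi_j$ is multivalued on $S\setminus\{\beta_j\}$ and the boundary of the chain $\arg\circ f|_S$ includes a full geodesic in direction $n'_j$ around which $\varphi_j$ jumps by $2\pi$ while $\theta_i$ is approximately constant equal to $\arg(\beta_j-\alpha_i)=\tfrac{\pi}{2}+\arctan\bigl(\tfrac{\alpha_i-\Re\beta_j}{\Im\beta_j}\bigr)$. You do flag in your closing paragraph that the puncture bookkeeping is where the real work lies, which is exactly right, but the preceding sentence mislocates where the contribution comes from, and the analogous issue recurs in the $\varphi_j\wedge\varphi_{j'}$ term. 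The paper itself sidesteps all this by deferring the three planar computations to \cite{blomme2020computation}.
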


\begin{proof}
For a complete proof, the reader is referred to \cite{blomme2020computation}. The idea is as follows:
\begin{itemize}[label=$\bullet$]
\item First use a monomial map $\alpha:(\CC^*)^{r+s}\rightarrow N_{\CC^*}$ to write $C$ as the image of some real oriented curve of degree $\{e_0,e_1,\dots,e_r,(e'_1)^2,\dots,(e'_s)^2\}$, where $|\Delta|-1=r+2s$ and $e_0=-\sum_1^r e_i-2\sum_1^s e'_i$.
\item Then, decompose $\alpha^*\omega$ in the basis $(e_i^*\wedge e_j^*)$ of $\Lambda^2(\ZZ^{r+s})^*$.
\item Using the projections $\ZZ^{r+s}\rightarrow\ZZ^2$ for each pair of coordinates, one reduces the computation of the log-area to the case of a line in $\CC P^2$, a parabola with complex intersection points, and a conic in $\CC P^2$ tangent to one axis and with complex intersection points with the other two axis. Those computations are made in \cite{blomme2020computation}.
\end{itemize}

\end{proof}

\section{$\omega$-problem and refined count of real oriented curves}

\subsection{Enumerative problem}

We consider the hereby described enumerative problem. Let $\Delta=(n_k,n_l^\pm)_{0\leqslant k\leqslant R, 1\leqslant l\leqslant S}\subset N$ be a degree consisting of $1+R+2S$ primitive vectors, where $n_l^+=n_l^-$, and let $\omega$ be a $2$-form on $N$ which is chosen generically, see section \ref{generic choice omega} or remark \ref{remark generic choice} for a precise statement. In particular, the vectors $n_k^{(\pm)}$ do not belong to $\ker\omega$. We look for real oriented rational curves of degree $2\Delta$, inside the positive orthant. In particular, these curves are tangent to the toric divisors. Such a curve has a parametrization of the form
$$f:y\longmapsto\chi\prod_{i=1}^R (y-\lambda_i)^{2n_i}\prod_{j=1}^S \left[(y-\lambda_j^+)(y-\lambda_j^-)\right]^{2n_j^+}\in N_{\CC^*},$$
where $\chi\in N_{\RR_+^*}$ is some positive cocharacter, and $\lambda_i^{(\pm)}$ are the coordinates of the intersection points with the toric boundary for some choice of coordinate on $\CC P^1$. From now on, we will mix up speaking about the real part of the curve, and its image by the logarithmic map $N_{\RR_+^*}\rightarrow N_\RR$, which is a diffeomorphism from the positive orthant to its image. In this new setting, with $\xi=\log\chi\in N_\RR$, the parametrization becomes
$$f:y\longmapsto\xi+\sum_{i=1}^R 2n_i \log|y-\lambda_i|+\sum_{j=1}^S 2n_j^+\log\left[(y-\lambda_i^+)(y-\lambda_i^-)\right]\in N_\RR.$$
This form allows for easier computations. Let $\MM_{0,R,S}^\tau$ be the moduli space of real oriented rational curves with $R+1$ real marked points $p_0,\dots,p_R$ and $S$ pairs of complex conjugated points $p_1^\pm,\dots,p_S^\pm$. The space of parametrized oriented real rational curves of degree $2\Delta$ tangent to the toric divisors in $\CC\Delta$ is in bijection with $N_\RR\times\MM_{0,R,S}^\tau$. The coordinate in the $N_\RR$ factor corresponds to the choice of $\xi$ once a coordinate on the curve has been fixed.

\begin{defi}
We have the \textit{moment map}
$$\mom:(\xi,(\lambda_i))\in N_\RR\times\MM_{0,R,S}^\tau\longmapsto (\mu_k),(\mu_k^+)\in \RR^{R}\times(\CC/2i\pi\ZZ)^S,$$
that sends a parametrized curve to the \textit{moment} of the curve at the real marked points $p_k$ for $k\neq 0$ and complex marked points $p_k^+$:
\begin{align*}
\mu_k^{(+)}= & \omega(n_k^{(\pm)},\xi)+\sum_{i=1}^R 2\omega(n_k^{(\pm)},n_{\gamma(i)})\log(\lambda(p_k^{(+)})-\lambda(p_{\gamma(i)})) \\
 & +\sum_{j=1}^S 2\omega(n_k,n^\pm_j)\log\left[(\lambda(p_k^{(+)})-\lambda(p_j^+))(\lambda(p_k^{(+)})-\lambda(p_j^-))\right],\\
\end{align*}
\textit{i.e.} the evaluation of the monomial $\iota_{n_k^{(+)}}\omega$ at $f(p_k^{(+)})$.
\end{defi}

For a complex marked point, the evaluation of the monomial is a complex number. Thus, its logarithm lives in $\CC/2i\pi\ZZ$.

\medskip

As $N_\RR\times\MM_{0,R,S}^\tau$ is of dimension $\rkn +R+2S-2$, if $\rkn\neq 2$, imposing the value of the moments is not enough to ensure having a finite number of curves. Thus, we choose a family of $\rkn-2$ monomials $m_1,\dots,m_{\rkn-2}\in M$ that completes $\iota_{n_0}\omega$ into a basis of $\gen{n_0}^\perp\subset M$, and evaluate them at $f(p_0)$.

\begin{defi}
We have the evaluation map
$$\ev: N_\RR\times\MM_{0,R,S}^\tau\longrightarrow \RR^{\rkn-2}\times\RR^R\times(\CC/2i\pi\ZZ)^S\equiv X,$$
that sends a curve to the evaluation of the monomials $m_i$ at $p_0$, and its moments $\mu_k^{(+)}$ at $p_k^{(+)}$.
\end{defi}

\begin{rem}
Notice that the Menelaus relation between the moments of the boundary points ensures that fixing the moments of every $p_k^{(+)}$ also fixes the moment of $p_0$.
\end{rem}

For a choice of $(P,\mu)\in\RR^{\rkn-2}\times\RR^R\times(\CC/2i\pi\ZZ)^S$, the enumerative problem consists in finding the curves that are sent to $(P,\mu)$ by the evaluation map. This amounts to find curves whose boundary points $p_k^{(+)}$ are sent to some toric suborbits prescribed by $\omega$, and $p_0$ is sent to some fixed point inside the suborbit defined by the others using the Menelaus condition.

\subsection{Invariance results}
\label{section statement of results}

By definition, all the curves solutions to the enumerative problem have a well-defined quantum index. Recall that if $S=0$, all the intersection points with the toric boundary are real and one can consider the quantum class $h(S)$, and if $S\neq 0$ one considers the log-area $k(S,\omega)$ with respect to the $2$-form $\omega$ used in the enumerative problem. This allows for a refined count of the solutions. However, in order to get a refined count that does not depend on the choice of the constraints, we also need to count the solutions with a sign that we now describe.

\medskip

Let $\oo(X)$ be an orientation on the space $X=\RR^{\rkn-2}\times\RR^R\times(\CC/2i\pi\ZZ)^S$ and let $\oo(N)$ be an orientation on the lattice $N$, \textit{e.g.} the one induced by $\omega$ when it is non degenerate. Let $\oo^{\gamma,J}(\MM)$ be the orientation on $\MM(\gamma,J)\subset\MM_{0,R,S}^\tau$ described in section \ref{section orientation moduli space}. Briefly, the moduli space of oriented real rational curves $\MM_{0,R,S}^\tau$ is a disjoint union of its connected components $\MM(\gamma,J)$, indexed by $\gamma\in\mathcal{S}_R$, which are all the possible cyclic orders that the orientation induces on the real marked points, and the subsets $J\subset[\![1;S]\!]$ of indices such that $p_j^-\in S$. In other terms, for a permutation $\gamma\in\mathcal{S}_R$, $\MM(\gamma,J)$ stands for the component of curves such that the induced order is
$$p_0<p_{\gamma(1)}<p_{\gamma(2)}<\cdots <p_{\gamma(R)}<p_0 ,$$
and $p_j^-\in S$ for $j\in J$. Each $\MM(\gamma,J)$ is orientable and has a canonical orientation $\oo^{\gamma,J}(\MM)$ provided by some choice of coordinates:
$$\oo^{\gamma,J}(\MM) = \left\{\begin{array}{l}
(-1)^R\oo(\eta) \text{ if }S\neq 0,\\
\oo(\nu_{1,N}) \text{ else}. \\
\end{array}\right.$$
See Section \ref{section orientation moduli space} for a precise definition. Together with $\oo(N)$, we get an orientation of $N_\RR\times\MM_{0,R,S}^\tau$.

\begin{defi}
The sign of a parametrized oriented curve $S\in N_\RR\times\MM(\gamma,J)$ is
$$\sigma(S)=\varepsilon(\gamma)(-1)^{|J|}\left.\frac{\ev^*\oo(X)}{\oo(N)\oo^{\gamma,J}(M)}\right|_S,$$
\textit{i.e.} it is $\varepsilon(\gamma)(-1)^{|J|}$ if $\ev$ is compatible with the chosen orientations, and $-\varepsilon(\gamma)(-1)^{|J|}$ if $\ev$ reverses the orientation.
\end{defi}

\begin{rem}
In concrete terms, the sign can be computed as $\varepsilon(\gamma)(-1)^{|J|}$ times the sign of determinant of the Jacobian matrix of the evaluation map when basis of both tangent spaces are chosen in an oriented way. 
\end{rem}

Now, let $(P,\mu)\in\RR^{\rkn-2}\times\RR^N$ be a generic choice of constraints, and let 
$$R_{\Delta,\omega}(P,\mu)=\sum_{S\in\ev^{-1}(P,\mu)}\sigma(S)q^{k(S)}\in\ZZ[\Lambda^2 N] \text{ or }\ZZ[q^{\pm 1/2}],$$
be the refined signed count of solutions to the enumerative problem. This is a Laurent polynomial in $\bino{\rkn}{2}$ variables if $S=0$, and a Laurent polynomial in $1$ variable if $S\neq 0$.

\begin{rem}
In the case $S=0$, to get Laurent polynomial in one variable, one can evaluate a $2$-form on the exponents, for instance $\omega$. However, it is worth noticing that in this case we have a coarser invariant since the $2$-form that we evaluate on the exponents needs not to be $\omega$, as it is yet the case in \cite{mandel2015scattering}.
\end{rem}

We have the following invariance statement.

\begin{theo}\label{theorem invariance}
The value of $R_{\Delta,\omega}(P,\mu)$ does not depend on the choice of $(P,\mu)$ as long as it is generic.
\end{theo}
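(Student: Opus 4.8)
The standard strategy for such invariance statements is a \emph{wall-crossing} argument: the space of constraints $(P,\mu)$ has a discriminant locus, its complement is a union of open chambers, and one shows that $R_{\Delta,\omega}(P,\mu)$ is locally constant on each chamber and unchanged when one crosses a wall of the discriminant. I would first check local constancy inside a chamber: for generic $(P,\mu)$ the evaluation map $\ev$ is a local diffeomorphism at each solution, so by the implicit function theorem the solutions, their signs $\sigma(S)$, and their quantum indices $k(S)$ (which are discrete and hence locally constant along any path in the parameter space of curves) all vary continuously, hence $R_{\Delta,\omega}(P,\mu)$ is constant on connected components of the regular locus. The content is therefore entirely in the wall-crossing.

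Next I would classify the walls. Generically a wall is codimension one and corresponds to a single solution $S_0$ where $\ev$ fails to be a local diffeomorphism in the mildest possible way (a fold), or where two marked points collide / a curve degenerates so that the combinatorial type changes — in the oriented real setting this is exactly where one passes between two components $\MM(\gamma,J)$ and $\MM(\gamma',J')$ of $\MM_{0,R,S}^\tau$. For a fold-type wall, two solutions with opposite values of $\left.\ev^*\oo(X)/(\oo(N)\oo^{\gamma,J}(\MM))\right|_S$ but \emph{equal} $\varepsilon(\gamma)(-1)^{|J|}$ and \emph{equal} quantum index (they lie in the same component and are connected through the wall) collide and disappear; their contributions $+q^{k}$ and $-q^{k}$ cancel, so $R_{\Delta,\omega}$ is unchanged. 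For a wall where the cyclic order changes by an adjacent transposition, $p_a$ and $p_b$ collide on $\RR C$: I would analyze the local model near such a collision, show that the two solutions on either side have quantum indices differing in a controlled way and signs $\varepsilon(\gamma)(-1)^{|J|}$ differing by exactly the sign change coming from the transposition, so that again the net contribution matches across the wall. The analogous analysis for a complex conjugate pair $p_j^\pm$ crossing the real locus (so $J$ changes by adding/removing $j$) uses the factor $(-1)^{|J|}$; here the quantum-index shift is governed by Theorem~\ref{theorem quantum index} and the computation of Theorem~\ref{log-area rational curve}, since passing a complex puncture through a real intersection changes $\theta_j$ across $0$ or $1$.

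Concretely I would reduce everything to the local picture near the degenerating curve by the usual argument: only the solutions that undergo a change as $(P,\mu)$ crosses the wall matter, and near such a solution the family of curves converges to a nodal/tropical-type limit whose combinatorics is pinned down. The additivity of the log-area (the Proposition on tropical limits) then lets me compute the change of $k(S)$ from the single vertex or edge where the degeneration happens, while the orientation computations of Section~\ref{section orientation moduli space} pin down the change of $\sigma(S)$. The key identity to verify at each wall is that $\sigma(S)q^{k(S)}$ summed over the colliding solutions is the same on both sides. Because — as the authors remark — the signs were reverse-engineered from the tropical refined multiplicity $m_\Gamma^q$, this matching is forced at walls coming from trivalent degenerations; the genericity of $\omega$ (Remark~\ref{remark generic choice}) guarantees no worse walls occur, in particular no flat-vertex degenerations contribute.

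\textbf{Main obstacle.} The hard part will be the case analysis of the ``real'' walls where the cyclic order of the real marked points changes and the associated local analysis of how the quantum index $h(S)$ (resp. $k(S,\omega)$) jumps: one must produce an explicit local model for two real punctures colliding, or a complex pair crossing the real axis, compute the variation of the log-area there using Theorem~\ref{log-area rational curve}, and match it against the sign contribution $\varepsilon(\gamma)(-1)^{|J|}$ together with the canonical orientation $\oo^{\gamma,J}(\MM)$. Getting the two to cancel — rather than merely differ by a controllable factor — is where all the bookkeeping lives, and it is precisely here that the genericity hypothesis on $\omega$ is used to exclude the extra walls mentioned at the end of the introduction.
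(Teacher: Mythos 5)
Your high-level framework (chamber decomposition, local constancy, wall-crossing) is correct, and your treatment of fold-type walls within a fixed component $\MM(\gamma,J)$ does match the paper's first case: there the invariance is just the degree of the evaluation map between oriented manifolds. However, your plan for the second kind of wall — where the curve degenerates and the combinatorial type changes — contains a genuine gap. You propose to show that ``the two solutions on either side have quantum indices differing in a controlled way and signs \dots differing by exactly the sign change coming from the transposition, so that again the net contribution matches across the wall,'' i.e.\ a pairwise local matching of contributions. This fails. When the path reaches a reducible curve $C_A\cup C_B$, the smoothing in the orientation-preserving way jumps from quantum index $k_A+k_B$ to $k_A-k_B$ as $t$ crosses $t_*$; the two nearby solutions therefore contribute monomials with \emph{different exponents} in $q$, and there is no local cancellation at all. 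The paper says so explicitly: ``Thus, we do not have local invariance on the domain.''

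The idea you are missing is the global one: for a fixed $C_A$, the reducible curves $C_A\cup C_B$ mapping to $(P(t_*),\mu(t_*))$ are parametrized by intersection points of $C_A$ with a hypersurface $\mathcal{H}^{\beta_0}(\mu_B)\subset N_\RR$ of fixed momenta (the image, under evaluation at a free marked point, of the space of $\Delta_B$-curves with prescribed moments). The argument then is topological, not local: $C_A$ is contractible in $N_\RR$, the hypersurface $\mathcal{H}^{\beta_0}(\mu_B)$ is closed for generic $\mu$ (this is where the genericity of $\omega$ enters, via Proposition~\ref{prop reducible limit}), so the \emph{signed} count of intersections is zero:
\[
\sum_{C_B}\mathrm{sgn}\ \omega(\tau_A,\tau_B)=0 .
\]
Proposition~\ref{prop which side of the wall} identifies $\mathrm{sgn}\ \omega(\tau_A,\tau_B)$ with the side of the wall reached by the orientation-preserving smoothing, and a Jacobian computation in the coordinates of Section~\ref{section coordinates reducible curves} shows that all these smoothings with quantum index $k_A+k_B$ carry the same sign $\sigma$. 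So the contributions balance out across the whole family of reducible curves, not one against one. Without this global cancellation mechanism — which requires setting up the hypersurface of fixed momenta, its characteristic foliation by the form $\omega$, and its closedness — your plan stalls precisely at the reducible-curve walls, which you correctly identified as the hard part but for which your proposed local model cannot work.
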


The refined count independent of $(P,\mu)$ is then denoted by $R_{\Delta,\omega}$. Notice that it does not only depend on the choice of $\omega$ and the degree $\Delta$, but also on the choice of the marked point which has additional constraints, here $p_0$, which corresponds to the first element of $\Delta$, and the number of complex intersections with each toric divisor.

\begin{rem}
These invariants generalize the planar refined invariants introduced by Mikhalkin in \cite{mikhalkin2017quantum}.
\end{rem}

\subsection{Tropical computation of the invariant}

As the value of $R_{\Delta,\omega}(P,\mu)$ does not depend on the choice of $(P,\mu)$, it is natural to try compute it by choosing $(P,\mu)$ close to the tropical limit. This allows us to relate $R_{\Delta,\omega}$ to some already known tropical invariants considered in \cite{blomme2020refined}. We briefly recall their definition and refer to \cite{blomme2020refined} or \cite{mandel2015scattering} for a more complete description.

\medskip

Let $\MM_0(\Delta_\mathrm{trop},N_\RR)$ be the moduli space of degree $\Delta_\mathrm{trop}=(n_k,2n_l^+)_{0\leqslant k\leqslant R,1\leqslant l\leqslant S}$ parametrized rational tropical curves in $N_\RR$, which has an evaluation map
$$\ev_\mathrm{trop}:\MM_0(\Delta_\mathrm{trop},N_\RR)\longrightarrow \RR^{\rkn-2}\times\RR^N,$$
that evaluates the monomials $m_i$ on the first unbounded end, and $\iota_{n_k}\omega$ on the other $N=R+S$ unbounded ends.

\begin{defi}
The refined multiplicity of a parametrized trivalent tropical curve $h:\Gamma\rightarrow N_\RR$ is
$$m_\Gamma^q=\prod_{w}(q^{a_w\wedge b_w}-q^{-a_w\wedge b_w})\in\ZZ[\Lambda^2 N],$$
where the product is over the vertices of $\Gamma$, and $a_w$ and $b_w$ are the slopes of two out of the three outgoing edges in an order such that $\omega(a_w,b_w)>0$.
\end{defi}

For a generic choice of $(P,\mu)\in\RR^{\rkn-2}\times\RR^N$, the tropical curves in $\ev_\mathrm{trop}^{-1}(P,\mu)$ are trivalent, and we can make the refined count
$$N^{\partial,\mathrm{trop}}_{\Delta,\omega}(P,\mu)=\sum_{\Gamma\in\ev_\mathrm{trop}^{-1}(P,\mu)} m_\Gamma^q\in\ZZ[\Lambda^2 N].$$
The resulting Laurent polynomial does in fact not depend on the choice of $(P,\mu)$ as long as it is generic. See \cite{blomme2020refined} or \cite{mandel2015scattering} for a proof.

\begin{rem}
Concretely, it means that we are counting rational tropical curves whose first unbounded end is fixed, and whose other unbounded ends are contained in some fixed affine hyperplane of slope $n_k^{\perp_\omega}$, where $n_k$ directs the corresponding unbounded end.
\end{rem}

\begin{theo}\label{theorem tropical computation}
One has the following:
\begin{itemize}[label=$\circ$]
\item If there are no complex constraints, \textit{i.e.} $S=0$, then
$$R_{\Delta,\omega}(q^{1/4})=N_{\Delta_\mathrm{trop},\omega}^{\partial,\mathrm{trop}}\in\ZZ[\Lambda^2 N].$$
\item If $S\neq 0$, let $\langle\omega,P\rangle$ denotes the evaluation of $\omega$ on the exponents vectors for a Laurent polynomial $P\in\ZZ[\Lambda^2 N]$, then
$$R_{\Delta,\omega}(q^{1/4})=(q-q^{-1})^{-S} \left\langle \frac{\omega}{2},N_{\Delta_\mathrm{trop},\omega}^{\partial,\mathrm{trop}}\right\rangle. \in\ZZ[q^{\pm 1/2}].$$
\end{itemize}
\end{theo}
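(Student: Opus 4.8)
The plan is to compute $R_{\Delta,\omega}(P,\mu)$ by degenerating the constraints $(P,\mu)$ toward the tropical limit, exactly as in the planar case treated in \cite{blomme2020computation}, and to match the resulting contributions termwise with the refined tropical count defining $N^{\partial,\mathrm{trop}}_{\Delta_\mathrm{trop},\omega}$. First I would invoke the correspondence theorem of \cite{blomme2020computation}: for a choice of $(P,\mu)$ sufficiently close to a generic tropical configuration $(P_\mathrm{trop},\mu_\mathrm{trop})$, the solutions $S\in\ev^{-1}(P,\mu)$ are organized into packets, one packet per tropical curve $\Gamma\in\ev_\mathrm{trop}^{-1}(P_\mathrm{trop},\mu_\mathrm{trop})$, and each packet consists of the real oriented curves whose tropical limit is $\Gamma$. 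By genericity of $\omega$ (remark \ref{remark generic choice}) the relevant $\Gamma$ are trivalent with no flat vertices, so $\omega(a_w,b_w)\neq 0$ at every vertex; this is what makes the vertex-by-vertex analysis go through.

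Next I would analyze a single packet. By the additivity of the quantum index near the tropical limit (the Proposition following Theorem \ref{theorem computation toric type I}, together with the splitting $\mathcal{A}_\mathrm{log}(S_t,\varpi)=\sum_w\mathcal{A}_\mathrm{log}(S_w,\varpi)$), the quantum index of a curve in the packet is a sum of local contributions, one per real vertex $w$ of $\Gamma$, each computed by the Corollary to Theorem \ref{theorem computation toric type I} as $\pm a_w\wedge b_w$ (or its $\omega$-evaluation, in the case $S\neq 0$, via Theorem \ref{log-area rational curve}). The two choices of complex orientation at a vertex contribute $q^{+a_w\wedge b_w}$ and $q^{-a_w\wedge b_w}$, and the crucial point is that the signs $\sigma(S)$ — which by the Remark in section \ref{section statement of results} were reverse-engineered precisely for this purpose — are $+1$ on one and $-1$ on the other, so summing over the local orientation choices at $w$ produces the factor $q^{a_w\wedge b_w}-q^{-a_w\wedge b_w}$. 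This requires checking that the sign $\varepsilon(\gamma)(-1)^{|J|}\,\ev^*\oo(X)/(\oo(N)\oo^{\gamma,J}(\MM))$ factors as a product over vertices of local signs, which follows from the block-triangular structure of the Jacobian of $\ev$ near the tropical limit; this computation is "readily identical" to the one in \cite{blomme2020computation} and I would cite it rather than redo it. Multiplying over all vertices of $\Gamma$ recovers $m^q_\Gamma$, and summing over $\Gamma$ gives $N^{\partial,\mathrm{trop}}_{\Delta_\mathrm{trop},\omega}$.

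The degree $2\Delta$ versus $\Delta$ discrepancy is handled by the substitution $q\mapsto q^{1/4}$: applying the square map $\mathrm{Sq}$ multiplies each local slope pair $(a_w,b_w)$ by $2$, hence $a_w\wedge b_w$ by $4$, which is exactly absorbed by the $1/4$ in the exponent — this is the content of the sentence "they differ by the square map, which changes the quantum index by a value $4$". In the case $S\neq 0$ there is the additional bookkeeping that each pair of complex conjugated ends of $\Delta_\mathrm{trop}$ carries a doubled vector $2n_j^+$ and that only the $\omega$-evaluation of the quantum index is available: here I would track how the Theorem \ref{log-area rational curve} shift terms $\pi^2\varepsilon_j(2\theta_j-1)$ interact with the tropical picture and produce the prefactor $(q-q^{-1})^{-S}$ and the pairing $\langle\omega/2,\,\cdot\,\rangle$, again following \cite{blomme2020computation}.

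\textbf{Main obstacle.} The genuinely delicate step is the sign comparison: verifying that the orientation-theoretic sign $\sigma(S)$, assembled from the global orientations $\oo(X),\oo(N),\oo^{\gamma,J}(\MM)$, really does localize to a product of vertex signs that cancel in pairs to yield the factor $q^{a_w\wedge b_w}-q^{-a_w\wedge b_w}$ with the correct overall normalization — in particular that no spurious global sign survives. Since the invariants $R_{\Delta,\omega}$ are independent of $(P,\mu)$ by Theorem \ref{theorem invariance}, it suffices to do this for one convenient tropical degeneration, but even there the matching of orientations is where all the real content lies; everything else is a transcription of the computation in \cite{blomme2020computation} into the higher-dimensional, general-$\omega$ setting.
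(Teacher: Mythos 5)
Your proposal follows essentially the same route as the paper's: invoke the correspondence theorem of \cite{blomme2020computation} near the tropical limit, use additivity of the quantum index to write $h(S)$ as a vertexwise sum of $\pm a_w\wedge b_w$, gather the $2^{N-1}$ gluings over each $\Gamma$ into $m^q_\Gamma$, absorb the degree-$2\Delta$ versus $\Delta$ discrepancy with the substitution $q\mapsto q^{1/4}$, and match signs via the orientation comparison near the tropical limit. The only slip is one of attribution: you defer the sign-localization step ("block-triangular Jacobian") to \cite{blomme2020computation}, whereas the paper proves it in-house as Theorem \ref{prop orientations tropical limit} (with its pruning lemmas), precisely because the higher-dimensional, general-$\omega$ setting requires a new computation; mathematically, however, this is the same step you correctly flag as the crux.
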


\begin{proof}
We use the real version of the correspondence theorem proved in \cite{blomme2020computation}, the fact that the quantum index is computed as a sum over the vertices close to the tropical limit, and a consequence of Theorem \ref{prop orientations tropical limit} that asserts that the sign we use to count real curves agree with the tropical sign close to the tropical limit. The proof is readily the same as the proof from the planar case in \cite{blomme2020computation}.

\medskip

Here is a sketch of what happens in the totally real case:
\begin{itemize}[label=$\bullet$]
\item For each tropical solution $\Gamma$ with $N-1$ vertices, the correspondence theorem builds $2^{N-1}$ solutions to the enumerative problem close to the tropical limit, according to how we \textit{glue} the curves $C_w$ over the vertices of $\Gamma$ together.
\item Each \textit{gluing} is obtained by choosing at each vertex $w$ an orientation $S_w$ on the corresponding curve $C_w$, and then glue along the edges in the orientation preserving way.
\item As the quantum index close to the tropical limit is the sum of the quantum indices over the vertices $w$, and as $h(S_w)=\pm a_w\wedge b_w$, we can gather the contribution of the $2^{N-1}$ solutions into
$$\prod_w (q^{a_w\wedge b_w}-q^{-a_w\wedge b_w})=m^q_\Gamma,$$
provided that we count the oriented curves with a sign $\prod_w\mathrm{sgn}\ \omega(a_w,b_w)$.
\item Finally, Theorem \ref{prop orientations tropical limit} ensures that the sign $\prod_w\mathrm{sgn}\ \omega(a_w,b_w)$ corresponds to the $\sigma$ used in the classical refined count.
\end{itemize}
\end{proof}

\section{Ingredients for the proof}

We present in this section the main technical tools used in the proof of Theorem \ref{theorem invariance} and Theorem \ref{theorem tropical computation}.

\subsection{Orientations on the moduli space of real oriented curves}
\label{section orientation moduli space}

Consider the moduli space $\MM_{0,R,S}^\tau$ of oriented real rational curves with $1+R+2S$ marked points $p_0,\dots,p_R,p_{1}^\pm,\dots,p_{S}^\pm$. There are $R+1$ real points $p_i$ and $S$ pairs of complex conjugated points $p_j^\pm$. The connected components of $\MM_{0,R,S}^\tau$ are in bijection with the permutation $\gamma$ of $[\![1;R]\!]$ induced by the cyclic order on the real marked points, along with the repartition of the marked points $p_j^\pm$ on the two hemispheres.

\begin{defi}
Let $\MM(\gamma,J)$ be the component of $\MM_{0,R,S}^\tau$ corresponding to curves whose cyclic ordering on the real points $p_j$ is
$$p_0<p_{\gamma(1)}<p_{\gamma(2)}<\cdots < p_{\gamma(R)}<p_0,$$
and with $J=\{j \text{ s.t. }p_j^-\in S\}$, where $S$ denotes the component inducing the orientation of the curve.
\end{defi}

Notice that choosing a coordinate for which $p_0$ is $\infty$, there is a well-defined projection of the complex points $p_j^\pm$ on the real axis: $p_j^\pm\mapsto\re p_j$. For $i\in[\![0;R]\!]$, let $q_i=p_i$, and for $j\in[\![1;S]\!]$, let $q_{R+j}=\re p_j^\pm$. In that case, let $q_{R+j}^\pm=p_j^\pm$. We then have a family of real marked points $(q_j)_{0\leqslant j\leqslant N}$, where $N=R+S$. If we restrict to the open set $U$ of $\MM(\gamma,J)$ where the $q_j$ are distincts, the connected components of $U\subset\MM(\gamma,J)$ are the $\MM(\tilde{\gamma},J)$, indexed by the possible cyclic orders on the points $q_j$. Those are in bijection with the set of permutations $\tilde{\gamma}$ on $[\![1;N]\!]$ which induce the permutation $\gamma$ on the real marked points $p_i$ since the order induced on the real markings is fixed by $\MM(\gamma,J)$. Only the position of the $(q_j)_{R<j\leqslant N}$ associated to complex marked points might change:
$$p_0=q_0<q_{\tilde{\gamma}(1)}<\cdots<q_{\tilde{\gamma}(N)}<q_0.$$

We now specify some orientations on both $\MM(\gamma,J)$ and $\MM(\tilde{\gamma},J)$.

\begin{itemize}[label=$\bullet$]
\item For $\MM(\gamma,J)$, assuming $S\neq 0$, let $p_t^\pm$ be a pair of complex marked points. Let $\eta_t$ be the unique coordinate on $\CC P^1$ preserving the orientation such that $\eta_t(p_0)=\infty$ and $\{\eta_t(p_t^\pm)\}=\{\pm i\}$. Then, with $\sigma_j$ such that $p_j^{\sigma_j}\in S$, the data
$$\left( \eta_t(p_{\gamma(1)}),\dots,\eta_t(p_{\gamma(R)}),\eta_t(p_1^{\sigma_1}),\dots,\widehat{\eta_t(p_t^{\sigma_t})},\dots,\eta_t(p_S^{\sigma_S}) \right) \in\RR^{R}\times\CC^{S-1} ,$$
gives a set of coordinates on $\MM(\gamma,J)$ and a bijection to some open set of $\RR^R\times\CC^{S-1}$. As this space has a canonical orientation, it induces a natural orientation denoted by $\oo(\eta_t)$.

\item For $\MM(\tilde{\gamma},J)$ let $1\leqslant r<s\leqslant N$, so that $q_0< q_{\tilde{\gamma}(r)}<q_{\tilde{\gamma}(s)}<q_0$. Let $\nu_{r,s}$ be the unique coordinate such that
$$\left\{ \begin{array}{l}
\nu_{r,s}(q_0)=\infty,\\
\nu_{r,s}(q_{\tilde{\gamma}(r)})=0,\\
\nu_{r,s}(q_{\tilde{\gamma}(s)})=1.\\
\end{array}\right.$$
The data of every $\nu_{r,s}(q_{\tilde{\gamma}(j)})\in\RR$ associated to a real marking (resp. $\nu_{r,s}(q_{\tilde{\gamma}(j)}^{\sigma_{\tilde{\gamma}(j)}})\in\CC$ for complex markings) except $\nu_{r,s}(q_{\gamma(r)})$ and $\nu_{r,s}(q_{\gamma(s)})$ (equal to $0$ and $1$), in the order prescribed by $\tilde{\gamma}$ gives a set of coordinates on $\MM(\tilde{\gamma},J)$. We get a bijection to some open subset of
$$\prod_{i=1}^N\left\{\begin{array}{l}
\RR \text{ if }q_{\tilde{\gamma}(i)}\text{ is a real marking},i\neq r,s\\
\CC \text{ if }q_{\tilde{\gamma}(i)}\text{ is a complex marking},i\neq r,s\\
\{0\} \text{ if }q_{\tilde{\gamma}(i)}\text{ is a real marking},i= r,s\\
i\RR \text{ if }q_{\tilde{\gamma}(i)}\text{ is a complex marking},i= r,s\\
\end{array}\right. .$$
It also induces a natural orientation on the component, which is denoted by $\oo(\nu_{r,s})$. In other words, we remember the coordinates, real or complex, of every marked point in the order given by $\tilde{\gamma}$, except for $q_{\tilde{\gamma}(r)}$ and $q_{\tilde{\gamma}(r)}$, where we forget their coordinate if they are real, or their real part if they are complex.
\end{itemize}

\begin{rem}
If there are no complex marked points, the only coordinates are the $\nu_{r,s}$ since there are no $\eta_t$ coordinate.
\end{rem}

These various orientations of the components $\MM(\gamma,J)$ relate as follows.

\begin{prop}
One has the following:
\begin{enumerate}[label=(\roman*)]
\item The orientation $\oo(\eta_t)$ on $\MM(\gamma,J)$ does not depend on $t$: $\oo(\eta_t)=\oo(\eta_1)$, and is thus denoted $\oo(\eta)$.

\item On $\MM(\tilde{\gamma},J)$, let $N(r,s,\tilde{\gamma})$ be the number of coordinates between $\nu_{r,s}(q_{\tilde{\gamma}(r)})$ and $\nu_{r,s}(q_{\tilde{\gamma}(s)})$. It has the same parity as the number of real markings in the interval, plus one if $q_{\tilde{\gamma}(r)}$ is a complex marking (for its imaginary part). Then we have
$$\oo(\nu_{r,s}) =(-1)^{N(r,s,\tilde{\gamma})}\oo(\eta).$$

\item If there are no complex marked point, then $R=N$ and one has
$$\oo(\nu_{r,s})=(-1)^{N+r+s+1}\oo(\nu_{1,N}).$$
\end{enumerate}
\end{prop}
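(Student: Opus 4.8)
All three statements compare two orientations on one and the same component of $\MM_{0,R,S}^\tau$, each of which is, by construction, the pullback of the standard orientation of a product of copies of $\RR$ and $\CC$ under an explicit chart; so each comparison reduces to computing the sign of the Jacobian of the transition map between the two charts, and this transition is induced by a change of the chosen coordinate on $\CC P^1$. For (i), $\eta_s\circ\eta_t^{-1}$ is an automorphism of $\CC P^1$ that conjugates the standard real structure to itself and fixes $\infty=\eta_t(p_0)$, hence a real affine map $z\mapsto\lambda z+\mu$; because $\eta_t$ and $\eta_s$ induce the same cyclic order on the real markings of $\MM(\gamma,J)$ and both send the distinguished hemisphere $S$ to the upper half-plane, one finds $\lambda=1/\mathrm{Im}\,\eta_t(p_s^{\sigma_s})>0$ and $\mu\in\RR$. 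Substituting the coordinates of the markings into $z\mapsto\lambda z+\mu$ and using that $\eta_t(p_t^{\sigma_t})=i$ is constant, the Jacobian of the transition is block triangular: a positive power of $\lambda$ on the markings kept by both charts, and a $2\times2$ block $\partial\,\eta_s(p_t^{\sigma_t})/\partial\,\eta_t(p_s^{\sigma_s})$ of determinant $(\mathrm{Im}\,\eta_t(p_s^{\sigma_s}))^{-3}>0$; the only reorganization is to move the complex block of $p_s$ past the other complex blocks in the source and to insert that of $p_t$ in the target, which is an even permutation of the coordinate factors because every complex marking occupies a $2$-dimensional block. Hence $\oo(\eta_s)=\oo(\eta_t)$.

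For (iii) the transition $\nu_{r,s}\circ\nu_{1,N}^{-1}$ is again real affine $z\mapsto\lambda z+\mu$ fixing $\infty$, now pinned down by $\nu_{r,s}(q_{\tilde\gamma(r)})=0$ and $\nu_{r,s}(q_{\tilde\gamma(s)})=1$; writing $a<b$ for the $\nu_{1,N}$-coordinates of $q_{\tilde\gamma(r)}$ and $q_{\tilde\gamma(s)}$ (one has $a<b$ since $r<s$), this gives $\lambda=1/(b-a)>0$ and $\mu=-a/(b-a)$. Splitting the markings into the $N-4$ that are a coordinate of both charts — on which the transition is diagonal with entry $\lambda$, since $\lambda$ and $\mu$ depend only on $a,b$ — the two that $\nu_{r,s}$ newly records, namely $q_{\tilde\gamma(1)}$ and $q_{\tilde\gamma(N)}$ with values $\mu$ and $\lambda+\mu$, and the two it ceases to record, namely $q_{\tilde\gamma(r)}$ and $q_{\tilde\gamma(s)}$, the Jacobian becomes block triangular with blocks $\lambda\,\mathrm{Id}_{N-4}$ and $\partial(\mu,\lambda+\mu)/\partial(a,b)$, the latter of determinant $(b-a)^{-3}>0$. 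All signs therefore come from the permutation carrying the ordered list of $N-2$ coordinate factors of the $\nu_{1,N}$-chart to that of the $\nu_{r,s}$-chart; a count of its inversions yields $(-1)^{N+r+s+1}$, the boundary cases $r=1$ or $s=N$ (where one of the small blocks degenerates) being checked directly to the same effect.

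Claim (ii) is handled identically with complex markings present: $\nu_{r,s}\circ\eta_t^{-1}$ is real affine $z\mapsto\lambda z+\mu$ fixing $\infty$, with $\lambda>0$ on the stratum $\MM(\tilde\gamma,J)$, the Jacobian again factors as a positive power of $\lambda$ on the common markings times a small block of positive determinant coming from the re-normalized points and from $p_t$, and every sign is a permutation parity. Since each complex marking contributes an even-dimensional block, only real markings and the imaginary-part coordinates of the re-normalized complex points affect this parity, and a direct count identifies it with $N(r,s,\tilde\gamma)$; alternatively one reduces to the elementary moves $s\mapsto s+1$ and $r\mapsto r+1$, each altering the orientation by $(-1)^{\#\{\text{coords of }q_{\tilde\gamma(s)}\}}$ resp.\ $(-1)^{1+[q_{\tilde\gamma(r)}\text{ complex}]}$, together with one base comparison against an $\oo(\eta_t)$, and telescopes to the same answer. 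The conceptual input is minimal — every transition Jacobian of these affine changes of coordinate is positive on the component at hand — so the actual work, and the one spot where a genuine (if elementary) computation is unavoidable, is the sign bookkeeping: tracking which coordinates are traded, evaluating the small $2\times2$ determinants, and counting the parity of the permutation of coordinate factors, with care for the boundary cases and for the asymmetric roles of $q_{\tilde\gamma(r)}$ versus $q_{\tilde\gamma(s)}$ and of real versus complex markings in (ii).
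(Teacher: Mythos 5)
Your proposal is correct and follows essentially the same approach as the paper: write the transition between charts as a real affine map $z \mapsto \lambda z + \mu$ fixing $\infty$ with $\lambda > 0$, observe that the Jacobian factors into a positive diagonal block of powers of $\lambda$, a small block of positive determinant (your $(\mathrm{Im}\,\eta_t(p_s^{\sigma_s}))^{-3}$ and $(b-a)^{-3}$ computations are correct), and a permutation of coordinate factors whose parity — even for (i) because complex markings occupy $2$-dimensional blocks, and $(-1)^{N+r+s+1}$ resp.\ $(-1)^{N(r,s,\tilde\gamma)}$ for (iii) and (ii) — supplies the sign. The paper's own proof is merely a terse statement of this same recipe; you have supplied the details the paper leaves implicit, including the boundary cases $r=1$, $s=N$, which do require the separate check you mention since the small block degenerates to a negative $1\times 1$ determinant compensated by a shorter cycle.
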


\begin{proof}
\begin{enumerate}[label=(\roman*)]
\item We have that $\eta_t=\frac{\eta_1-\re p_t^\pm}{\im p_t^{\sigma_1}}$. This allows us to write the Jacobian matrix associated to the change of coordinates. We check that it has positive determinant.
\item If $q_{\tilde{\gamma}(r)}$ or $q_{\tilde{\gamma}(s)}$ is a complex point $p_t^\pm$, we can write the Jacobian matrix of the change of coordinate from $\nu_{r,s}$ to the chosen coordinate $\eta_{\tilde{\gamma}(r)}$ or $\eta_{\tilde{\gamma}(s)}$ and check that it has determinant of sign $(-1)^{N(r,s,\tilde{\gamma})}$. If both points are real, the we relate to $\eta_1$ to get the same result.
\item If there are no complex marked points, we proceed similarly writing the Jacobian matrix of the change of coordinates.
\end{enumerate}
\end{proof}

\begin{rem}
Instead of remembering the coordinate of $p_t^{\sigma_t}\in S$, we could remember the coordinate of $p_t^{-\sigma_t}\in\overline{S}$. For each such point, this changes the orientation by $-1$ since it amounts to reverse the sign of one imaginary coordinates.
\end{rem}

\begin{defi}
We set
$$\oo^{\gamma,J}(\MM) = \left\{\begin{array}{l}
(-1)^R\oo(\eta) \text{ if }S\neq 0,\\
\oo(\nu_{1,N}) \text{ if }S=0. \\
\end{array}\right.$$
\end{defi}

\subsection{Coordinates near a strata of reducible curves}
\label{section coordinates reducible curves}

We assume that the degree $\Delta$ splits as a disjoint union $\Delta_A\sqcup\Delta_B$, with $n_0\in\Delta_A$, which are both degrees of respective sizes $p$ and $q$, so that $N=p+q$. Let $\MM(\gamma,J)$ be a component of $\MM_{0,R,S}^\tau$ adjacent to the strata of reducible curves with two components of respective degrees $\Delta_A$ and $\Delta_B$. A reducible curve is written $C_A\cup C_B$. The marked points, both real and pairs of conjugated points split themselves between the two components $C_A$ and $C_B$, meaning there are some real marked points $p_{\gamma(r)}<\cdots<p_{\gamma(s)}$ and complex marked points $p_j^\pm$ for $j\in K_B\subset[\![1;S]\!]$ that specialize to $C_B$. In other words, they have the same coordinate on $C_A$.

\medskip

We consider a subcomponent $\MM(\tilde{\gamma},J)$ of $\MM(\gamma,J)$ adjacent to the same strata of reducible curves. Confusing the notations for $\tilde{\gamma}(k)$ and $q_{\tilde{\gamma}(k)}$, the order $p_0<\tilde{\gamma}(1)<\cdots<\tilde{\gamma}(N)<p_0$ on the marked points can be rewritten
$$q_0<\tilde{\alpha}(1)<\cdots<\tilde{\alpha}(r)<\tilde{\beta}(1)<\cdots<\tilde{\beta}(q)<\tilde{\alpha}(r+1)\cdots<\tilde{\alpha}(p)<q_0,$$
for some $r$, where $\tilde{\alpha}(1),\dots,\tilde{\alpha}(p)$ are the marked points specializing to $C_A$ and $\tilde{\beta}(1),\dots,\tilde{\beta}(q)$ specializing to $C_B$. The $r$ corresponds to the position of the points specializing to $C_B$.

\begin{rem}
Notice that the order on the points $q_j$ belonging to $C_B$ is fixed using the projection away from $p_0$. If one forgets $p_0$, there is no such order anymore. Thus, the only things that are fixed are the cyclic order induced on the real markings $p_i\in B$, and the subset of indices $K_B\cap J$ of points such that $p_j^-$ is on the upper hemisphere.
\end{rem}

We now describe a specific set of coordinates on $\MM(\tilde{\gamma},J)$ that extends to a set of coordinates on the strata of reducible curves $C_A\sqcup C_B$.
\begin{itemize}[label=$\bullet$]
\item Let $A$ be the unique coordinate such that $A(q_0)=\infty$, $A(q_{\tilde{\alpha}(1)})=0$ and $A(q_{\tilde{\alpha}(p)})=1$. In other words it is equal to $\nu_{q+1,N}$ if $r=0$, $\nu_{1,p}$ if $r=p$, and $\nu_{1,N}$ else. It specializes on a coordinate on $C_A$.
\item Let $B$ be the unique coordinate such that $B(q_0)=\infty$, $B(q_{\tilde{\beta}(1)})=0$ and $B(q_{\tilde{\beta}(q)})=1$. It specializes to a coordinate on $C_B$ such that the intersection point between $C_A$ and $C_B$ has coordinate $\infty$. In other words, $B=\nu_{r+1,r+q}$.
\end{itemize}

Both coordinates are related by the following identity:
$$A=A(q_{\tilde{\beta}(1)})+(A(q_{\tilde{\beta}(q)})-A(q_{\tilde{\beta}(1)}))B.$$
We now take the following data as a set of coordinates on $\MM(\tilde{\gamma},J)$:
$$\left\{\begin{array}{l}
a_i=A(q_{\tilde{\alpha}(i)}) \text{ or }A(q_{\tilde{\alpha}(i)}^{\sigma_{\tilde{\alpha}(i)}}) \text{ for }i \neq 1,p, \\
\eta=A(q_{\tilde{\beta}(1)}), \\
\delta=A(q_{\tilde{\beta}(q)})-A(q_{\tilde{\beta}(1)}), \\
b_j=B(q_{\tilde{\beta}(j)}) \text{ or } B(q_{\tilde{\beta}(j)}^{\sigma_{\tilde{\beta}(j)}}) \text{ for }j\neq 1,q,\\
\end{array}\right.$$
taken in the order prescribed by $\tilde{\gamma}$:
$$(\widehat{a_1},\dots,a_r,\eta,b_2,\dots,b_{q-1},\delta,a_{r+1},\dots,\widehat{a_p}).$$
where the hat means removing (the real part of) the coordinate. The coordinate $\eta$ (resp. $\delta$) might be followed by the imaginary part of the $B$-coordinate $\im B(q_{\tilde{\beta}(1)}^{\sigma_{\tilde{\beta}(1)}})$ (resp. $\im B(q_{\tilde{\beta}(q)}^{\sigma_{\tilde{\beta}(q)}})$) of the point if $q_{\tilde{\beta}(1)}$ (resp. $q_{\tilde{\beta}(q)}$) is complex. The variable $\eta$ accounts for the position of the double point, \textit{i.e.} the position of the specialization of all $q_{\tilde{\beta}(j)}$ on $C_A$. The variable $\delta$ stands for the position of another point, namely $q_{\tilde{\beta}(s)}$, so that the knowledge of the $B(q_{\tilde{\beta}(j)})$ allows one to recover the position of the other marked points.

Unfortunately, in the above choice of coordinates, there are a few special cases to consider:
\begin{itemize}[label=$\circ$]
\item If $p=1$, which means that beside $p_0$, the only marked point left on $C_A$ is a real point or a pair of complex marked points, the coordinate $A$ is not properly defined. In that case, we define $A$ using $q_{\tilde{\alpha}(1)}$ and $q_{\tilde{\beta}(1)}$.
\item If $q=1$, as $C_B$ has at least three marked points, it means that $q_{\tilde{\beta}(1)}$ is associated to a pair of complex marked points $p_t^\pm$. We then take $B=\eta_t$. In that case,
$$A=\eta+\delta B=A(q_{\tilde{\beta}(1)})+\im A(q_{\tilde{\beta}(1)}^{\sigma_{\tilde{\beta}(1)}}))B.$$
Fortunately, this case does not occur in the proof since the component $C_B$ would have degree $0$.
\item Last, it is possible that some complex points specializing on $C_A$ belong to the interval $[\tilde{\beta}(1),\tilde{\beta}(q)]$, but this does not matter since it does not change the orientation.
\end{itemize}

\begin{prop}
\label{prop orientation reducible strata}
If $S\neq 0$, the orientation induced by the previous set of coordinates is
$$(-1)^{N(\tilde{\gamma},\tilde{\gamma}^{-1}\tilde{\alpha}(1),\tilde{\gamma}^{-1}\tilde{\alpha}(p))}\oo(\eta).$$
Moreover, in these coordinates, the strata of reducible curves is given by the equation $\delta=0$. Else, the orientation is $\oo(\nu_{1,p})$, $\oo(\nu_{q+1,N})$ or $\oo(\nu_{1,N})$.
\end{prop}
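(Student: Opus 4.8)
The plan is to set up the change of coordinates from the specified data $(\widehat{a_1},\dots,a_r,\eta,b_2,\dots,b_{q-1},\delta,a_{r+1},\dots,\widehat{a_p})$ to one of the already-oriented coordinate systems $\oo(\nu_{r,s})$ or $\oo(\eta)$ on $\MM(\tilde{\gamma},J)$, compute the sign of the Jacobian determinant, and then invoke the previous Proposition to translate everything back into $\oo(\eta)$ (when $S\neq 0$). First I would treat the generic case $p\geqslant 2$ and $q\geqslant 2$, where $A=\nu_{1,N}$ (or $\nu_{1,p}$, $\nu_{q+1,N}$ according to the value of $r$). The coordinates $a_i$ for $i\neq 1,p$ are literally the $\nu$-coordinates of the marked points on $C_A$ (up to the place where $\eta,\delta$ sit), and the $b_j$ differ from the corresponding $\nu$-coordinates of the points on $C_B$ by the affine reparametrization $A=\eta+\delta B$. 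So the Jacobian is block triangular: the block in the $a_i$'s is the identity, the block recording how $B$-coordinates of points on $C_B$ depend on $\eta,\delta,b_j$ is itself triangular after separating the dependence on $\delta$, and the overall determinant reduces to a power of $\delta$ times $\pm 1$.

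Next I would read off the sign. The key point is that each $B(q_{\tilde\beta(j)})$ equals $(A(q_{\tilde\beta(j)})-\eta)/\delta$, so $\partial b_j/\partial(\text{$A$-coordinate})=1/\delta$, and there are $q-2$ such real contributions (plus one more for an imaginary part if $q_{\tilde\beta(1)}$ or $q_{\tilde\beta(q)}$ is complex — but in the cases arising in the proof these endpoints can be arranged so this is accounted for cleanly). Collecting these gives a factor $\delta^{-(q-2)}$ up to sign; since on $\MM(\tilde\gamma,J)$ we have $\delta>0$ in a neighborhood of the relevant stratum (the cyclic order forces $A(q_{\tilde\beta(1)})<A(q_{\tilde\beta(q)})$, as $q_0=\infty$ and the $\tilde\beta$'s lie in a single arc), the sign of the Jacobian is a fixed $\pm 1$ independent of the point, hence the new coordinates do define an orientation. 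The reordering of the list of coordinates into the order prescribed by $\tilde\gamma$ — moving $\eta$ and $\delta$ into the block of $C_B$-points, and inserting the $a_i$ around them — contributes the permutation sign; counting the transpositions needed is exactly what produces the exponent $N(\tilde\gamma,\tilde\gamma^{-1}\tilde\alpha(1),\tilde\gamma^{-1}\tilde\alpha(p))$, after which part (ii) of the previous Proposition rewrites $\oo(\nu_{\tilde\alpha(1),\tilde\alpha(p)})$ in terms of $\oo(\eta)$.

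For the statement that the reducible stratum is cut out by $\delta=0$: as $\delta\to 0$ all the points $q_{\tilde\beta(j)}$ collide on $C_A$ (their $A$-coordinates all tend to $\eta$), which is precisely the bubbling-off of $C_B$; conversely the $b_j$ remain well defined in the limit and give coordinates on the $C_B$-factor of the stratum while $\eta$ and the $a_i$ give coordinates on the $C_A$-factor, so the chosen data indeed extends across $\delta=0$ as claimed. Finally I would dispose of the degenerate cases separately: $p=1$ (redefine $A$ via $q_{\tilde\alpha(1)}$ and $q_{\tilde\beta(1)}$ — the same block-triangular computation goes through), $q=1$ (does not occur, as noted, since $C_B$ would have degree $0$), and the harmless case where some complex point of $C_A$ falls in the arc $[\tilde\beta(1),\tilde\beta(q)]$ (it shifts the list but not the parity, since such a point contributes a real and an imaginary coordinate). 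The main obstacle I anticipate is purely bookkeeping: tracking the exact permutation sign through the reordering and matching it to $N(\tilde\gamma,\tilde\gamma^{-1}\tilde\alpha(1),\tilde\gamma^{-1}\tilde\alpha(p))$, together with the $\pm 1$ coming from whether the endpoints $q_{\tilde\beta(1)},q_{\tilde\beta(q)}$ are real or complex; the analytic content (block-triangularity, positivity of $\delta$) is straightforward.
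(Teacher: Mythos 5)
Your overall strategy is the same as the paper's: observe that on $\MM(\tilde\gamma,J)$ the coordinate $A$ is one of $\nu_{1,p}$, $\nu_{q+1,N}$ or $\nu_{1,N}$, relate the new tuple $(\widehat{a_1},\dots,a_r,\eta,b_2,\dots,b_{q-1},\delta,a_{r+1},\dots,\widehat{a_p})$ to the $A$-coordinates by the affine change $A = \eta + \delta B$, compute the Jacobian, and finish with part (ii) of the preceding proposition. The observation that $\delta>0$ on the relevant chart and that the reducible stratum is $\{\delta=0\}$ is also correct. But there are two bookkeeping slips that would, taken literally, give the wrong answer.

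First, there is no reordering of slots to carry out. The paper takes the new coordinates \emph{already} in the order prescribed by $\tilde\gamma$: $\eta$ occupies the slot of $q_{\tilde\beta(1)}$, $\delta$ occupies the slot of $q_{\tilde\beta(q)}$, and $b_j$ occupies that of $q_{\tilde\beta(j)}$, while the two omitted slots ($a_1$ and $a_p$) are precisely the slots omitted by $A=\nu_{r,s}$ with $r=\tilde\gamma^{-1}\tilde\alpha(1)$, $s=\tilde\gamma^{-1}\tilde\alpha(p)$. So the change of coordinates is slot-by-slot, with no permutation: $a_i\mapsto a_i$, $\eta\mapsto \eta$, $b_j\mapsto \eta+\delta b_j$, $\delta\mapsto \eta+\delta$. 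Your claim that ``counting the transpositions \dots is exactly what produces the exponent $N(\tilde\gamma,\tilde\gamma^{-1}\tilde\alpha(1),\tilde\gamma^{-1}\tilde\alpha(p))$'' followed by ``after which part (ii) \dots rewrites $\oo(\nu_{\tilde\alpha(1),\tilde\alpha(p)})$ in terms of $\oo(\eta)$'' has you collecting the factor $(-1)^N$ twice — once from an alleged permutation and once from part (ii) — which would cancel and yield $+\oo(\eta)$ instead of $(-1)^N\oo(\eta)$. The correct accounting: the Jacobian of the slot-by-slot change is positive, so the new coordinates induce $\oo(\nu_{r,s})=\oo(A)$, and then part (ii) alone supplies $(-1)^{N(r,s,\tilde\gamma)}$.

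Second, you stop at ``a fixed $\pm 1$'' for the sign of the Jacobian; the proposition needs that sign to be $+1$. It is easily pinned down: writing the $C_B$-block in the order $(\eta,b_2,\dots,b_{q-1},\delta)$, the Jacobian of $(\eta,b_j,\delta)\mapsto(A_1,\dots,A_q)=(\eta,\eta+\delta b_j,\eta+\delta)$ has a leading $1$ in position $(1,1)$, and after expanding along the first row the remaining minor is upper triangular with diagonal $(\delta,\dots,\delta,1)$, giving determinant $\delta^{q-2}>0$ (with the obvious modification $\delta^{q-2}\cdot\delta$ or $\delta^2$ per complex coordinate when $q_{\tilde\beta(1)}$ or $q_{\tilde\beta(q)}$ is complex — still positive). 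With these two corrections your proof matches the paper's.
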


\begin{proof}
The second statement is immediate. To get the statement about the orientations, we consider the change of coordinates. The function that maps the new coordinates to the coordinate $A$, which induces the orientation $\oo(\nu_{1,p})$, $\oo(\nu_{q+1,N})$ or $\oo(\nu_{1,N})$ respectively, consists in replacing each $b$-coordinate by $\eta+\delta b$, and $\delta$ by $\delta+\eta$. We can check that it has positive determinant. Thus, the orientation is the same as the one induced by $A$, which is equal to $(-1)^{N(\tilde{\gamma},\tilde{\gamma}^{-1}\tilde{\alpha}(1),\tilde{\gamma}^{-1}\tilde{\alpha}(p))}\oo(\eta)$ if $S\neq 0$.
\end{proof}

\begin{rem}
Allowing $\delta$ to take negative values leads to coordinates on the component $\MM(\tilde{\gamma}',J')$, which is obtained by smoothing the node in the converse direction. In other words, with $\delta$ taking negative values, we get coordinates in the neighborhood of a reducible curve in the moduli space, and that involves both its possible smoothings, which live either in $\MM(\tilde{\gamma},J)$ or in $\MM(\tilde{\gamma}',J')$. On the other adjacent component $\MM(\tilde{\gamma}',J')$, where the order $\tilde{\gamma}'$ on the points is
$$q_0<\tilde{\alpha}(1)<\cdots<\tilde{\alpha}(r)<\tilde{\beta}(q)<\cdots<\tilde{\beta}(1)<\tilde{\alpha}(r+1)\cdots<\tilde{\alpha}(p)<q_0,$$
and one has $J'=J\Delta K_B\subset[\![ 1;S]\!]$: every complex point on $C_B$ changes side. When coming from this side, the coordinates that we get on the strata of reducible curves are
$$\left\{\begin{array}{l}
a'_i=A(q_{\tilde{\alpha}(i)}) \text{ or }A(q_{\tilde{\alpha}(i)}^{\sigma_{\tilde{\alpha}(i)}}) \text{ for }i \neq 1,p, \\
\eta'=A(q_{\tilde{\beta}(q)}), \\
\delta'=A(q_{\tilde{\beta}(1)})-A(q_{\tilde{\beta}(q)}), \\
b'_j=B'(q_{\tilde{\beta}(j)}) \text{ or } B'(q_{\tilde{\beta}(j)}^{-\sigma_{\tilde{\beta}(j)}}) \text{ for }j\neq 1,q,\\
\end{array}\right.$$ 
taken in the order
$$(\widehat{a'_1},\dots,a'_r,\eta',b'_{q-1},\dots,b'_2,\delta',a'_{r+1},\dots,\widehat{a_p}).$$
The coordinate $B'$ is related to $B$ via $B'=1-B$. Thus, one might check that the orientations defined on $\MM(\gamma,J)$ might not extend on the whole space, even with the twist by $\varepsilon(\gamma)$.
\end{rem}

\subsection{Crossing number of a reducible curve}

In this subsection, we still consider a reducible curve $C_A\cup C_B$ of degree $\Delta_A\sqcup\Delta_B$, which is limit of oriented curves, inducing an orientation on both components $C_A$ and $C_B$. Let $\tau_A$ and $\tau_B$ be the oriented tangent vectors to $C_A$ and $C_B$ at their common point. To keep an easy computation, we assume the intersection points with the toric boundary to be real, but the computation is readily the same with complex intersection points.

\begin{lem}
The oriented tangent vector to $C_A$ at the node is $\tau_A=\sum_{i=1}^p\frac{2n_{\alpha(i)}}{\eta-a_i}$ and the oriented tangent vector to $C_B$ at the node is $\tau_B=\sum_{j=1}^q 2n_{\beta(j)}b_j$.
\end{lem}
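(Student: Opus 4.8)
The statement is a direct computation of the tangent vectors to the two components $C_A$ and $C_B$ at the node, using the explicit parametrization of rational curves recalled in the enumerative setting. The plan is to write down a parametrization of the reducible curve near the node in the coordinates introduced in Section~\ref{section coordinates reducible curves}, and differentiate with respect to the coordinate on each component.

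First I would recall that an oriented real rational curve of degree $2\Delta_A$ on the component $C_A$ is parametrized, in logarithmic coordinates, by
$$f_A: y\longmapsto \xi_A+\sum_{i=1}^{p} 2n_{\alpha(i)}\log|y-a_i|,$$
where $y=A$ is the chosen coordinate on $C_A$ sending $q_0=p_0$ to $\infty$, and the $a_i$ are the $A$-coordinates of the marked points specializing to $C_A$ (with the convention that the imaginary parts are used for complex points, which as noted does not affect the computation). The node of $C_A\cup C_B$ is the specialization point of all the $q_{\tilde\beta(j)}$, which in the coordinate $A$ has position $\eta$. Differentiating $f_A$ with respect to $y$ and evaluating at $y=\eta$ gives the oriented tangent vector
$$\tau_A=\der{f_A}\Big|_{y=\eta}=\sum_{i=1}^{p}\frac{2n_{\alpha(i)}}{\eta-a_i},$$
which is the first claimed formula. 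Here the orientation of $\tau_A$ is fixed by the orientation of $C_A$ induced by the ambient orientation $S$, i.e.\ by the direction of increasing $y$ along the real locus.

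For the second component, the key point is the change of coordinate $B=\nu_{r+1,r+q}$ which sends the node to $\infty$ (since $B(q_0)$ is the $B$-coordinate of a point on $C_A$, and $C_A\cap C_B$ has $B$-coordinate $\infty$ by construction). In the coordinate $B$, the marked points on $C_B$ sit at positions $b_j=B(q_{\tilde\beta(j)})$, and $C_B$ is parametrized by $f_B: z\mapsto \xi_B+\sum_{j=1}^{q} 2n_{\beta(j)}\log|z-b_j|$. Since the node is at $z=\infty$, I would introduce the local coordinate $u=1/z$ near the node and rewrite $\log|z-b_j|=\log|1-ub_j|-\log|u|$; the singular $-\log|u|$ terms sum to $-(\sum_j 2n_{\beta(j)})\log|u|$, which vanishes because $\sum_j n_{\beta(j)}=0$ is forced by the fact that the total degree of $C_B$ sums to zero (the complementary ends at the node cancel). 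Hence $f_B$ extends smoothly across $u=0$ and
$$\tau_B=\der{f_B}\Big|_{u=0}=\sum_{j=1}^{q}\frac{\partial}{\partial u}\Big(2n_{\beta(j)}\log|1-ub_j|\Big)\Big|_{u=0}=-\sum_{j=1}^{q}2n_{\beta(j)}b_j,$$
up to the sign coming from the orientation convention on the tangent direction at $u=0$ versus $z=\infty$; choosing the orientation consistently with $C_B$'s induced orientation yields $\tau_B=\sum_{j=1}^{q}2n_{\beta(j)}b_j$ as stated.

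The only subtlety — and the step I expect to require the most care — is bookkeeping the orientation/sign conventions: one must verify that differentiating in the coordinate $B$ near $B=\infty$, after passing to $u=1/B$, produces the tangent vector with the \emph{same} orientation as the one induced on $C_B$ by the choice of component $S$, rather than its opposite, and likewise that the relation $A=\eta+\delta B$ between the two coordinates is compatible with the orientations chosen on $\MM(\tilde\gamma,J)$. This is exactly the kind of sign-tracking already carried out in Section~\ref{section coordinates reducible curves}, so it reduces to a short check using the explicit change of coordinates; I would not expect any genuine difficulty beyond careful signs, and the vanishing $\sum_j n_{\beta(j)}=0$ is what guarantees the parametrization is regular at the node so that $\tau_B$ is well-defined.
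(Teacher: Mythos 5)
Your proposal is correct and essentially follows the same route as the paper: differentiate the explicit logarithmic parametrization of each component at the node, and for $C_B$ change coordinates to move the node away from $\infty$. The one substantive divergence is your choice of local coordinate near the node on $C_B$: you set $u=1/z$, which has negative Jacobian as a real Möbius transformation (matrix $\bigl(\begin{smallmatrix}0&1\\1&0\end{smallmatrix}\bigr)$ has determinant $-1$), so your raw computation $\tau_B=-\sum_j 2n_{\beta(j)}b_j$ carries an orientation-reversing sign that you then have to peel off by an informal appeal to ``choosing the orientation consistently.'' The paper instead uses $z=-1/y$, whose matrix $\bigl(\begin{smallmatrix}0&-1\\1&0\end{smallmatrix}\bigr)$ has determinant $+1$, so it is orientation-preserving and the formula $\tau_B=\sum_j 2n_{\beta(j)}b_j$ drops out directly with no sign to chase. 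Your explicit remark that the $-\log|u|$ terms cancel because $\sum_j n_{\beta(j)}=0$ is a useful clarification: the paper relies on exactly this cancellation (since $\log(y-b_j)$ becomes $\log(1+b_jz)-\log|z|$ under $y=-1/z$, and the $\log|z|$ terms vanish only because the $n_{\beta(j)}$ sum to zero, $C_B$ being a rational curve of degree $\Delta_B$ in the toric variety), but leaves it implicit. So the content matches; if you want the cleanest argument, swap your local coordinate for the orientation-preserving $-1/z$ and the sign-tracking you flag as ``the step requiring the most care'' disappears entirely.
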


\begin{proof}
In the limit coordinate, $C_A$ is parametrized by
$$y\mapsto\xi+\sum_{i=1}^p 2n_{\alpha(i)}\log(y-a_i),$$
We just compute the derivative at $y=\eta$. Meanwhile, $C_B$ is parametrized as follows:
$$y\mapsto\xi+\sum_{i=1}^p 2n_{\alpha(i)} \log(\eta-a_i)+\sum_{j=1}^q 2n_{\beta(j)}\log(y-b_j).$$
As one cannot evaluate at $y=\infty$, we make the prior change of coordinate $z=-\frac{1}{y}$, which preserves the orientation, getting a new parametrization
$$z\mapsto\xi+\sum_{i=1}^p 2n_{\alpha(i)} \log(\eta-a_i)+\sum_{j=1}^q 2n_{\beta(j)}\log(1+b_j z),$$
yielding the result by derivating at $0$.
\end{proof}

We then have
$$\omega(\tau_A,\tau_B)=4\sum_{i,j}\omega(n_{\alpha(i)},n_{\beta(j)})\frac{b_j}{\eta-a_i}.$$
The Menelaus Theorem forces that $\sum_{k=1}^q \mu_{\beta(k)}=0$. In fact, the sign of the quantity $\omega(\tau_A,\tau_B)$, called the \textit{crossing number} of $C_A$ and $C_B$, determines whether a deformation of $C_A\cup C_B$ in the orientation preserving way is sent to $\{\sum_{k=1}^q \mu_{\beta(k)}>0\}$ or rather $\{\sum_{k=1}^q \mu_{\beta(k)}>0\}$ by the moment map.

\begin{prop}\label{prop which side of the wall}
If $\omega(\tau_A,\tau_B)>0$, then the smoothing in the orientation preserving way lies in $\{\sum_{k=1}^q \mu_{\beta(k)}<0\}$.
\end{prop}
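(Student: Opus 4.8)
The plan is to track a one-parameter smoothing of the reducible curve $C_A\cup C_B$ through the coordinates $(\eta,\delta,a_i,b_j)$ introduced in Section~\ref{section coordinates reducible curves}, where the reducible locus is cut out by $\delta=0$, and compute the first-order variation of the sum of moments $\sum_{k=1}^q\mu_{\beta(k)}$ as $\delta$ moves away from $0$ in the direction corresponding to the smoothing in $\MM(\tilde\gamma,J)$. Since on $C_A\cup C_B$ the Menelaus condition on $C_B$ forces $\sum_{k=1}^q\mu_{\beta(k)}=0$, the sign of $\sum_{k=1}^q\mu_{\beta(k)}$ for the smoothing is governed by the sign of $\left.\der{}\left(\sum_k\mu_{\beta(k)}\right)\right|_{\delta=0}$, and the claim is exactly that this derivative has the opposite sign to $\omega(\tau_A,\tau_B)$.

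First I would write down the parametrization of a smoothed curve near the reducible locus. In the coordinate $A$ of Section~\ref{section coordinates reducible curves} the smoothed curve is parametrized by $y\mapsto\xi+\sum_{i=1}^p 2n_{\alpha(i)}\log(y-a_i)+\sum_{j=1}^q 2n_{\beta(j)}\log(y-c_j)$, where the points $c_j=A(q_{\tilde\beta(j)})=\eta+\delta\,b_j$ specializing on $C_B$ all collide at $\eta$ when $\delta=0$. Then the moment at the marked point $q_{\beta(k)}$, which sits at $y=c_k$, is $\mu_{\beta(k)}=\omega(n_{\beta(k)},\xi)+\sum_{i}2\omega(n_{\beta(k)},n_{\alpha(i)})\log(c_k-a_i)+\sum_{l\neq k}2\omega(n_{\beta(k)},n_{\beta(l)})\log(c_k-c_l)$. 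Summing over $k$: the double sum over $l\neq k$ is antisymmetric in $(k,l)$ under $\omega(n_{\beta(k)},n_{\beta(l)})=-\omega(n_{\beta(l)},n_{\beta(k)})$ but the $\log(c_k-c_l)$ term contributes $\log(c_k-c_l)+\log(c_l-c_k)=\log\big(-(c_k-c_l)^2\big)$, which is $\delta$-independent up to a constant and in fact just produces a term in $\log\delta$ with coefficient $2\sum_{l\neq k}\omega(n_{\beta(k)},n_{\beta(l)})$ times the ratio $b_k-b_l$; using that $\sum_k n_{\beta(k)}=0$ on $C_B$ (Menelaus/degree-zero condition on the $B$-component) I would check this whole piece contributes nothing to $\sum_k\mu_{\beta(k)}$ after symmetrization. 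The surviving piece is $\sum_k\sum_i 2\omega(n_{\beta(k)},n_{\alpha(i)})\log(\eta+\delta b_k-a_i)$, whose derivative at $\delta=0$ is $\sum_{k,i}2\omega(n_{\beta(k)},n_{\alpha(i)})\frac{b_k}{\eta-a_i}$.

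Now I would compare this with the crossing number: from the lemma preceding the statement, $\omega(\tau_A,\tau_B)=4\sum_{i,j}\omega(n_{\alpha(i)},n_{\beta(j)})\frac{b_j}{\eta-a_i}=-4\sum_{i,j}\omega(n_{\beta(j)},n_{\alpha(i)})\frac{b_j}{\eta-a_i}$, so
$$\left.\der{}\Big(\textstyle\sum_{k}\mu_{\beta(k)}\Big)\right|_{\delta=0}=-\tfrac12\,\omega(\tau_A,\tau_B).$$
Hence if $\omega(\tau_A,\tau_B)>0$, then $\sum_k\mu_{\beta(k)}$ is decreasing as $\delta$ increases through $0$ along the smoothing direction in $\MM(\tilde\gamma,J)$, i.e.\ the smoothing in the orientation-preserving way (the one corresponding to $\delta>0$, by the discussion of the two adjacent components in Section~\ref{section coordinates reducible curves}) lies in $\{\sum_k\mu_{\beta(k)}<0\}$, as claimed. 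The main obstacle I anticipate is purely bookkeeping rather than conceptual: verifying carefully that the $\log\delta$ and constant terms coming from the $c_k-c_l$ differences genuinely cancel in $\sum_k\mu_{\beta(k)}$ (this uses $\sum_k n_{\beta(k)}=0$ and the antisymmetry of $\omega$), and pinning down that the smoothing living in $\MM(\tilde\gamma,J)$ — as opposed to the other adjacent component $\MM(\tilde\gamma',J')$ — is indeed the $\delta>0$ branch with the chosen signs of $(b_j)$, so that "orientation-preserving smoothing" is correctly identified with the sign convention in which the lemma computes $\tau_A,\tau_B$.
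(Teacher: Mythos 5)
Your proposal is correct and follows essentially the same route as the paper's: write $\sum_k\mu_{\beta(k)}$ in the coordinates $(\eta,\delta,b_j)$, observe it vanishes identically on $\{\delta=0\}$ by Menelaus, compute $\tfrac{\partial}{\partial\delta}\big(\sum_k\mu_{\beta(k)}\big)\big|_{\delta=0}$, and compare with the crossing number. Your coefficient $-\tfrac12$ is in fact the correct one (the paper states $-\omega(\tau_A,\tau_B)$, a harmless factor-of-two slip, but only the sign is used), and your explicit identification of the orientation-preserving smoothing with the branch $\delta>0$ makes precise what the paper leaves implicit by reference to Section~\ref{section coordinates reducible curves}.

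One small bookkeeping point you should clean up. Your discussion of why the $\log(c_k-c_l)$ piece drops from $\sum_k\mu_{\beta(k)}$ is overcomplicated and slightly off: the identity $\log(c_k-c_l)+\log(c_l-c_k)=\log\big(-(c_k-c_l)^2\big)$ is not the pairing that occurs, since the terms carry coefficients $\omega(n_{\beta(k)},n_{\beta(l)})$ and $\omega(n_{\beta(l)},n_{\beta(k)})$ of opposite sign, not the same. The actual mechanism, and the one the paper uses, is simply that $\omega(n_{\beta(k)},n_{\beta(l)})\log|c_k-c_l|$ is antisymmetric under $k\leftrightarrow l$ ($\omega$ antisymmetric, $\log|\cdot|$ symmetric), so the double sum over $l\neq k$ vanishes outright; likewise $\sum_k\omega(n_{\beta(k)},\xi)=0$ since $\sum_k n_{\beta(k)}=0$. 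The appeal to $\sum_k n_{\beta(k)}=0$ to cancel a $\log\delta$ coefficient is only needed if you want to match the $c$-version of the formula to the $b$-version term-by-term (they agree because the $\log|\delta|$ coefficient vanishes), but it is superfluous once antisymmetry is invoked. After this tidying, your argument is the paper's argument.
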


\begin{proof}
The moment at a marked point $\beta(k)$ is
$$\mu_{\beta(k)} = \omega(n_{\beta(k)},\xi)
+ \sum_{i=1}^p 2\omega(n_{\beta(k)},n_{\alpha(i)}) \log(\eta+\delta b_k-a_i)
+ \sum_{j=1}^q 2\omega(n_{\beta(k)},n_{\beta(j)}) \log(b_k-b_j),$$
and thus
\begin{align*}
\sum_{k=1}^q\mu_{\beta(k)} & =\sum_{i,k} 2\omega(n_{\beta(k)},n_{\alpha(i)}) \log(\eta+\delta b_k-a_i)
+ \sum_{j,k} 2\omega(n_{\beta(k)},n_{\beta(j)}) \log(b_k-b_j) \\
 & =\sum_{i,k} 2\omega(n_{\beta(k)},n_{\alpha(i)}) \log(\eta+\delta b_k-a_i), \\
\end{align*}
since the term of the second sum is antisymmetric in $j$ and $k$. In particular, this function is constant equal to $0$ on $\{\delta=0\}$, as stated by the Menelaus Theorem. Then, the partial derivative with respect to $\delta$ evaluated at $\delta=0$ is
$$\sum_{i,k} 2\omega(n_{\beta(k)},n_{\alpha(i)})\frac{b_k}{\eta-a_i}=-\omega(\tau_A,\tau_B).$$
The result follows.
\end{proof}

\begin{rem}
In presence of complex intersection points, the sum $\sum\mu_{\beta(k)}$ is equal to the sum of the moments for the real intersection points, plus twice the sum of the common real part of the conjugated moments of points in a conjugated pair.
\end{rem}

\subsection{Orientations and tropical limit}

We consider the evaluation map restricted to some connected component $N_\RR\times\MM(\gamma,J)$ of $N_\RR\times\MM_{0,R,S}^\tau$:
$$\begin{array}{rccl}
\ev^{\gamma,J}: & N_\RR\times\MM(\gamma,J) & \longrightarrow & \RR^{\rkn-2}\times\RR^R\times(\CC/2i\pi\ZZ)^S=X ,\\
 & \left(\xi,(\lambda_k)\right) & \longmapsto & (\ev_{m_i})_{1\leqslant i\leqslant \rkn-2},(\mu_k)_{1\leqslant k\leqslant R},(\mu_k^+)_{1\leqslant k\leqslant S} \\
\end{array}$$
where
$$\mathrm{ev}_{m_i}=\langle m_i,\xi\rangle,$$
and
\begin{align*}
\mu_k^{(+)}= & \omega(n_k^{(\pm)},\xi)+\sum_{i=1}^R 2\omega(n_k^{(\pm)},n_{\gamma(i)})\log(\lambda(p_k^{(+)})-\lambda(p_{\gamma(i)})) \\
 & +\sum_{j=1}^S 2\omega(n_k,n^\pm_j)\log\left[(\lambda(p_k^{(+)})-\lambda(p_j^+))(\lambda(p_k^{(+)})-\lambda(p_j^-))\right].\\
\end{align*}
Notice that as usual, the complex logarithm has value in $\CC/2i\pi\ZZ$. This quotient disappears when we consider the derivative.

\medskip

For more details on the tropical setting, see \cite{blomme2020computation}. Let $C^{(t)}\in N_\RR\times\MM(\tilde{\gamma},J)$ be a family of oriented real rational curves inside one of the subcomponents of $\MM(\gamma,J)$. The tropical limit of this family is a parametrized real tropical curve $h:\Gamma\rightarrow N_\RR$, with a real structure denoted by $\sigma$. We assume that $\Gamma/\sigma$ is trivalent, and that $\Gamma$ has no flat vertex, which is the case for the tropical solutions admitting lift to solutions of the enumerative problem. By \textit{lift}, we mean that there exists some family of curves tropicalizing to it and satisfying the constraints of the enumerative problem. This is give by the realization theorem in \cite{blomme2020computation}. Thus, the vertices of $\Gamma$ are either trivalent, quadrivalent with a pair of exchanged edges, or pentavalent with two pairs of exchanged vertices.

\medskip

Furthermore, the real structure and the orientation on $C^{(t)}$ endow the quotient curve $\Gamma/\sigma$ with a \textit{ribbon structure}, \textit{i.e.} a cyclic order on the edges leaving each real vertex. At the totally real vertices, the ribbon structure is obtained as the orientation induced on the corresponding real component component. At the non-totally real vertices, the cyclic ordering is defined using the projection $p_j^\pm\mapsto\re p_j^\pm\in\RR P^1$.

\medskip

To each oriented real tropical curve $\Gamma$ is associated a sign obtained as follows:

\begin{defi}
Let $\mathcal{V}(\Gamma)$ be the set of vertices of $\Gamma/\sigma$ which come either from a real trivalent vertex, or from a real pentavalent vertex, and for such a vertex $W$, let $\omega_W=\omega(n_\mathrm{left},n_\mathrm{right})$ be the $2$-form evaluated at the slopes of two outgoing edges in the order prescribed by the ribbon structure. Then, we set
$$\sigma(\Gamma)=\prod_{W\in\mathcal{V}}\mathrm{sgn}\ \omega_W.$$
\end{defi}

We now show that near the tropical limit, $\sigma(\Gamma)$ and $\lim_t\sigma(C^{(t)})$ coincide. In other terms, we have a well-defined orientation on the component $\MM(\tilde{\gamma},J)$ so that the evaluation matrix has sign $\varepsilon(\gamma)(-1)^{|J|}\sigma(\Gamma)$ near the tropical limit $\Gamma$.

\medskip

The ends of the quotient tropical curve $\Gamma/\sigma$ are indexed by $[\![0;N]\!]$, in the order prescribed by $\tilde{\gamma}$. To each end of $\Gamma/\sigma$ corresponds a coordinate of the evaluation map. If $k$ corresponds to a real marking, we get $\mu_k\in\RR$, and if $k$ corresponds to a complex marking, we get $\mu_k^+\in\CC/2i\pi\ZZ$. The order in which we evaluate is fixed to be $p_1,\dots,p_R,p_1^+,\dots,p_S^+$. This order is imposed by the choice of an orientation on $X$. We can reorder into the order prescribed by $\tilde{\gamma}$, which changes the orientation by $\varepsilon(\gamma)$, since only the order on the real markings matters. Moreover, if we evaluate complex moments at $p_j^{\sigma_j}\in S$ rather than $p_j^+$, which changes the orientation by $(-1)^{|J|}$, since $J=\{j:p_j^- \in S\}$. Thus, we can assume that the coordinates are given by $\mu_k^{(\sigma_j)}$ in the order prescribed by $\tilde{\gamma}$, if we change the sign by $\varepsilon(\gamma)(-1)^{|J|}$.

\begin{figure}
\begin{center}
\definecolor{ffqqqq}{rgb}{1.,0.,0.}
\definecolor{xdxdff}{rgb}{0.49019607843137253,0.49019607843137253,1.}
\definecolor{ududff}{rgb}{0.30196078431372547,0.30196078431372547,1.}
\begin{tikzpicture}[line cap=round,line join=round,>=triangle 45,x=0.65cm,y=0.65cm]
\clip(0.,1.) rectangle (20.,14.);
\draw [line width=2.pt] (10.,13.)-- (10.,11.);
\draw [line width=2.pt] (10.,11.)-- (1.,2.);
\draw [line width=2.pt] (10.,11.)-- (19.,2.);
\draw [line width=2.pt] (3.,2.)-- (2.,3.);
\draw [line width=2.pt] (5.,6.)-- (9.,2.);
\draw [line width=2.pt] (7.,4.)-- (5.,2.);
\draw [line width=2.pt] (10.,2.)-- (14.53,6.47);
\draw [line width=2.pt] (12.,2.)-- (15.51,5.49);
\draw [line width=2.pt] (14.005697912670101,3.9942694345352288)-- (16.,2.);
\draw [line width=2.pt] (18.,3.)-- (17.,2.);
\draw [->,line width=2.pt] (5.,6.)-- (4.,5.);
\draw [->,line width=2.pt] (5.,6.)-- (6.,5.);
\begin{scriptsize}
\draw [fill=black] (10.,13.) circle (2.5pt);
\draw[color=black] (9.968299745797772,13.651309761832477) node {$\lambda_0$};
\draw [fill=ududff] (10.,11.) circle (2.5pt);
\draw[color=ududff] (11.5,11.240066934492399) node {$W_\mathrm{root}$};
\draw [fill=black] (1.,2.) circle (2.5pt);
\draw[color=black] (0.9578660225795702,1.3412805906752328) node {$\lambda_1$};
\draw [fill=black] (19.,2.) circle (2.5pt);
\draw[color=black] (18.957582216144573,1.573944372260679) node {$\lambda_N$};
\draw [fill=black] (3.,2.) circle (2.5pt);
\draw[color=black] (3.0729913097199932,1.3201293378038284) node {$\lambda_2$};
\draw [fill=xdxdff] (5.,6.) circle (2.5pt);
\draw[color=xdxdff] (6.,6.079161233869774) node {$W_3$};
\draw [fill=black] (9.,2.) circle (2.5pt);
\draw[color=black] (8.889585849356157,1.489339360775062) node {$\cdots$};
\draw [fill=black] (5.,2.) circle (2.5pt);
\draw[color=black] (4.913150309532161,1.4258856021608495) node {$\lambda_3$};
\draw [fill=black] (10.,2.) circle (2.5pt);
\draw[color=black] (9.925997240054965,1.5104906136464662) node {$\cdots$};
\draw [fill=black] (12.,2.) circle (2.5pt);
\draw[color=black] (11.914215009966963,1.5104906136464662) node {$\cdots$};
\draw [fill=black] (16.,2.) circle (2.5pt);
\draw[color=black] (15.932953055533766,1.489339360775062) node {$\cdots$};
\draw [fill=black] (17.,2.) circle (2.5pt);
\draw[color=black] (16.927061940489764,1.4681881079036578) node {$\lambda_{N-1}$};
\draw[color=black] (3.6017726315050993,5.740741187927307) node {$n_{W,\mathrm{left}}$};
\draw[color=black] (5.367902246267352,4.5) node {$n_{W,\mathrm{right}}$};
\end{scriptsize}
\end{tikzpicture}
\caption{}
\end{center}
\end{figure}

We root the tree $\Gamma/\sigma$ at $q_0$. We label the leaves $\lambda_1,\dots,\lambda_N$, so that $\lambda_l$ is associated to the marked point $q_{\tilde{\gamma}(l)}$. As $\Gamma/\sigma$ is trivalent and endowed with a ribbon structure, we can label its vertices $W_1,\dots,W_{N-1}$, so that $W_l$ is the nearest common ancestor of the leaves $\lambda_l$ and $\lambda_{l+1}$. Let $\kappa(l)$ be the minimum index among the leaves of $W_l$. Thus, the leaves descending from the first son of $W_l$ are indexed by $[\![\kappa(l);l]\!]$.

\medskip

To compute the determinant of the Jacobian of the evaluation map near the tropical limit, we use coordinates similar to the coordinates $(\alpha,\beta)$ from \cite{blomme2020computation}. Those coordinates are well-behaved near the tropical limit and enable an easy computation of the determinant. They are defined as follows. First, we define a coordinate $\nu_l=\nu_{W_l}$ associated to each vertex $W_l$ of $\Gamma/\sigma$.

\begin{itemize}[label=$\circ$]
\item If $W_l$ comes from a real vertex, let $\nu_l$ be the coordinate $\nu_{\kappa(l),l+1}$, which takes values $0$ and $1$ on the real part of the leaves $\lambda_{\kappa(l)}$ and $\lambda_{l+1}$.
\item If $W_l$ comes from a pair of complex trivalent vertices. Then $\nu_l$ is the coordinate such that $\nu_l(\lambda_{\kappa(l)}^{\sigma_{\kappa(l)}})=0$ and $\nu_l(\lambda_{l+1}^{\sigma_{l+1}})=1$.
\end{itemize}

For each pair of vertices $W_l,W_L$ linked by an edge $\gamma$ and with $W_l$ being the son of $W_L$, we have a relation:
$$\nu_L=\beta_\gamma+\alpha_\gamma\nu_l,$$
where:
\begin{itemize}[label=$\circ$]
\item If $W_l$ is a real vertex, $\alpha=\nu_L(\lambda_{l+1})-\nu_L(\lambda_{\kappa(l)})\in\RR$ and $\beta=\nu_L(\lambda_{\kappa(l)})\in\RR$.
\item If $W_l$ is a complex vertex, then $\alpha=\nu_L(\lambda_{l+1}^{\sigma_{l+1}})-\nu_L(\lambda_{\kappa(l)}^{\sigma_{\kappa(l)}})\in\CC$ and $\beta=\nu_L(\lambda_{\kappa(l)}^{\sigma_{\kappa(l)}})\in\CC$.
\end{itemize}

\medskip

Initially, as the vertex $W_\mathrm{root}$ is always real, the component $\MM(\tilde{\gamma},J)$ is endowed with the orientation $\nu_{1,l_\mathrm{root}+1}$ induced by the data of the coordinates of all points on the curve except (the real parts of) the coordinates of $\lambda_1$ and $\lambda_{l_\mathrm{root}+1}$.

\medskip

We now do some inductive process to transform this set of coordinates into the more suitable coordinates similar to \cite{blomme2020computation}. This is done by inductively replacing some $\nu_L(\lambda_k)$ by $\nu_{l}(\lambda_k)$ if $\lambda_k$ is not directly adjacent to $W_L$, and $W_l$ is the son of $W_L$ ancestor of $\lambda_k$.

\begin{itemize}[label=$\circ$]
\item If $W_l$ is real, then we have $\nu_L=\beta+\alpha\nu_l$ where
$$\left\{\begin{array}{l}
\beta=\nu_L(\lambda_{\kappa(l)}),\\
\alpha=\nu_L(\lambda_{l+1})-\nu_L(\lambda_{\kappa(l)}).\\
\end{array}\right. $$
Then, for each coordinate $\nu_L(\lambda_k)$ for $\lambda_k$ descendant of $W_l$, we replace it by $\nu_l(\lambda_k)$, except for $\nu_L(\lambda_{\kappa(l)})=\beta$ which stays, and $\nu_L(\lambda_{l+1})$ which is replaced by $\alpha$. Notice that the change includes the imaginary part of $\lambda_{\kappa(l)}$ and $\lambda_{l+1}$ when they are complex leaves. The function that maps new coordinates to previous coordinates is
$$(\beta,\nu_l(\lambda_k),\alpha,\nu_l(\lambda_k))\longmapsto (\beta,\beta+\alpha\nu_l(\lambda_k),\alpha+\beta,\beta+\alpha\nu_l(\lambda_k)),$$
where $\nu_l(\lambda_k)$ denotes the coordinates of the other leaves, including the potential imaginary parts of $\lambda_{\kappa(l)}$ and $\lambda_{l+1}$. In either case, this map preserves the orientation. Notice that (the real part of) the coordinate of $\lambda_{\kappa(l)}$ might not appear as a coordinate of the form $\nu_L(\lambda_{\kappa(l)})$ if the leaf has already been used to fix some $\nu_{l'}$.

\item If $W_l$ is a complex trivalent vertex, then for $\nu_L=\beta+\alpha\nu_l$, we have
$$\left\{ \begin{array}{l}
\alpha=\nu_L(\lambda_{l+1}^{\sigma_{l+1}})-\nu_L(\lambda_{\kappa(l)}^{\sigma_{\kappa(l)}}) ,\\
\beta=\nu_L(\lambda_{\kappa(l)}^{\sigma_{\kappa(l)}}).\\
\end{array} \right.$$
We replace each $\nu_L(\lambda_k)$ by $\nu_l(\lambda_k)$ except for $k=\kappa(l)$ where we rather take $\beta$, and $k=l+1$ where we take $\alpha$. The map that takes new coordinates to the previous one is
$$(\beta,\nu_{l}(\lambda_k),\alpha,\nu_{l}(\lambda_k))\longmapsto (\beta,\beta+\alpha\nu_{l}(\lambda_k),\alpha+\beta,\beta+\alpha\nu_{l}(\lambda_k)).$$
The vertex $W_L$ is either complex trivalent, in which case $\beta$ is already fixed to be $0$ or $1$ by the choice of $\nu_L$ and does not appear as a coordinate, or or a real quadrivalent or pentavalent vertex, in which case the real part of $\beta$ is already fixed to be $0$ or $1$ by the choice of $\nu_L$. Then, the coordinate in place of $\beta$ is either $\im\nu_L(\lambda_{\kappa(l)})$, or none. In either case, this change of coordinates preserves the orientation as it is a holomorphic map between complex coordinates.
\end{itemize}

The process terminates when a vertex $W_l$ is only adjacent to two leaves used to fix the coordinate $W_l$. In the end we get the following coordinates: 
\begin{itemize}[label=-]
\item for each vertex $W_l$ except the root, we get a coordinate $\alpha$, which is real and sits at the place of $\re\nu_\mathrm{root}(\lambda_{l+1})$ if the edge ending at $W_l$ is real, and complex and sits at the place of $\nu_\mathrm{root}(\lambda_{l+1})$ if it is complex.
\item a real coordinate $\beta$ for each quadrivalent vertex, equal to the imaginary part of the complex edge, at the place of $\im\nu_\mathrm{root}(\lambda_{\kappa(l)})$ or $\im\nu_\mathrm{root}(\lambda_{l+1})$ according to whether the complex edge is on the left or the right of $W_l$. (We speak of quadrivalent vertex on the \textit{right} or on the \textit{left}.)
\item a pair of real coordinates $(\beta_1,\beta_2)$ for each pentavalent vertex, equal to the imaginary parts of both complex edges, at the places of $\im\nu_\mathrm{root}(\lambda_{\kappa(l)})$ and $\im\nu_\mathrm{root}(\lambda_{l+1})$.
\end{itemize}

We can express the moments $\mu_k$ in the new coordinates $(\xi,\alpha,\beta)$, and compute the Jacobian matrix. Near the tropical limit, the $\beta$ coordinates converge to some finite positive value, while the $\alpha_\gamma$ coordinates vanish like $t^{|\gamma|}$, where $|\gamma|$ is the length of the edge associated to $\alpha$ in the tropical curve $\Gamma$. Therefore, we see the Jacobian matrix converges to the evaluation matrix of the tropical evaluation map on $\Gamma$. We now prove that the tropical sign agrees with the sign of the Jacobian matrix near the tropical limit.

\begin{theo}\label{prop orientations tropical limit}
Near the tropical limit, one has
$$\lim_t\sigma(C^{(t)})=\sigma(\Gamma).$$
\end{theo}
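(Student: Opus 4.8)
The plan is to compute the sign of the determinant of the Jacobian matrix of the evaluation map $\ev^{\gamma,J}$ in the coordinates $(\xi,\alpha,\beta)$ constructed above, take the limit $t\to 0$, and match it vertex by vertex with the tropical sign $\sigma(\Gamma)=\prod_{W\in\mathcal V}\mathrm{sgn}\,\omega_W$. Recall that after reordering the output coordinates we have already paid the factor $\varepsilon(\gamma)(-1)^{|J|}$, so it suffices to show that the limiting Jacobian determinant, computed with $\oo(N)$ on the $\xi$-block and the coordinate orientation $\oo(\nu_{1,l_\mathrm{root}+1})=\oo^{\gamma,J}(\MM)$ (up to the sign corrections already accounted for) on the $(\alpha,\beta)$-block, has sign $\prod_{W\in\mathcal V}\mathrm{sgn}\,\omega_W$.

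The main step is a block-triangularization of the limiting Jacobian by the tree structure. First I would observe, exactly as in \cite{blomme2020computation}, that near the tropical limit the $\beta$-coordinates tend to finite nonzero values while $\alpha_\gamma\sim t^{|\gamma|}$, so the only surviving entries of $\partial\mu_k/\partial\alpha_\gamma$ are those for which $\gamma$ lies on the path from the root to the leaf $\lambda_k$, and among those the dominant one corresponds to the deepest such edge; this is precisely the statement that the Jacobian converges to the tropical evaluation matrix on $\Gamma$. Organizing the rows (moments $\mu_k$) and columns ($\xi$, then one $\alpha$ per non-root vertex, plus the $\beta$'s) according to the rooted-tree order, the matrix becomes block upper-triangular with one diagonal block per vertex $W_l$: the root vertex contributes the $\xi$-block together with the $m_i$-rows (this block has sign $\pm 1$ independent of the curve — it only sees $\omega(n_0,-)$ and the $m_i$, and by the genericity of the $m_i$ completing $\iota_{n_0}\omega$ to a basis of $\gen{n_0}^\perp$ it is a fixed nonzero constant), and each other vertex $W_l$ contributes a small block coming from the leaf $\lambda_{l+1}$ (and, for quadri/pentavalent vertices, the extra $\beta$-rows/columns from the exchanged complex edges). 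I would then compute each such diagonal block: at a trivalent real vertex $W$ with outgoing slopes $n_\mathrm{left},n_\mathrm{right}$ in ribbon order, the relevant $1\times 1$ entry is, up to a positive factor coming from $1/(\eta-a_i)$-type terms and powers of $t$, equal to $\pm\omega(n_\mathrm{left},n_\mathrm{right})=\pm\omega_W$, using the tangent-vector computation of the Lemma preceding Proposition \ref{prop which side of the wall} and the crossing-number sign analysis of Proposition \ref{prop which side of the wall}; at a real pentavalent vertex the $3\times 3$ block (one $\alpha$ and two $\beta$'s, for the two exchanged pairs) reduces, after using that the complex conjugate edges contribute an antisymmetric combination, to a $1\times 1$ real piece $\pm\omega_W$ times a positive determinant in the imaginary parts; and at a complex trivalent vertex or a real quadrivalent vertex the corresponding $2\times 2$ (resp.\ larger) block is a holomorphic-type Jacobian whose determinant is $|{\cdot}|^2>0$, contributing $+1$ and no vertex to $\mathcal V(\Gamma)$, consistently with the definition of $\sigma(\Gamma)$.

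Assembling, $\det$ of the limiting Jacobian is (fixed positive or fixed-sign constant from the root/$m_i$-block) $\times\prod_{W\in\mathcal V}\mathrm{sgn}\,\omega_W\times(\text{positive factors})$, and after absorbing the constant into the choice of $\oo(X)$ — which is the normalization fixing $\sigma$ in the totally real trivalent case — we get $\lim_t\sigma(C^{(t)})=\varepsilon(\gamma)(-1)^{|J|}\cdot\varepsilon(\gamma)(-1)^{|J|}\prod_W\mathrm{sgn}\,\omega_W=\sigma(\Gamma)$, as claimed. I expect the main obstacle to be the careful bookkeeping of signs in the two non-generic vertex types: tracking which power of $t$ each $\alpha_\gamma$ carries through a chain of substitutions $\nu_L=\beta+\alpha\nu_l$ and verifying that every auxiliary change of coordinates in the inductive process is genuinely orientation-preserving (the holomorphic ones are clear, but the real ones of the form $(\beta,\nu,\alpha,\nu)\mapsto(\beta,\beta+\alpha\nu,\alpha+\beta,\beta+\alpha\nu)$ need the explicit Jacobian to be checked), together with confirming that the quadrivalent and pentavalent blocks really do decouple into a positive ``imaginary-part'' determinant times the expected real factor — this is where one must use that an exchanged pair of complex edges contributes a term $\omega(n_{\beta(k)},n_{\beta(j)})$ which is antisymmetric in $j,k$, exactly as in the cancellation in the proof of Proposition \ref{prop which side of the wall}.
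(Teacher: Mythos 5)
Your plan matches the paper's argument: both work in the $(\xi,\alpha,\beta)$-coordinates coming from the rooted ribbon tree, exploit the convergence of the Jacobian to the tropical evaluation matrix, and reduce the sign computation to a product of vertex-by-vertex contributions, with complex-trivalent and quadrivalent vertices contributing positively (via $\omega_W^2$) and real trivalent and pentavalent vertices contributing $\mathrm{sgn}\,\omega_W$, with the root handled separately against the $m_i$-block. The one small imprecision is that the limiting Jacobian is not literally block-diagonal (for instance the pentavalent block is $4\times 3$, not $3\times 3$); the paper resolves this by an iterative row/column pruning rather than a direct block-triangularization, and the sign bookkeeping you flag as the likely obstacle (the $\hat r(\lambda_{l+1})$-type factors and their relation to the number of right-branch leaves) is indeed where most of the paper's actual work lies.
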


\begin{proof}
We use the lemmas that follow the proof of this Theorem to compute the determinant of the Jacobian matrix in the orientations $(-1)^{|J|}\varepsilon(\gamma)\oo(X)$ and $\oo(\nu_{1,l_\mathrm{root}+1})$. Each step allows us to prune a vertex, until we have just one vertex. Then, assuming that the basis of the lattice $N$ ends with $-n_0$, we can do a last pruning to get rid of the last vertex. Let $\sigma_0$ is the sign of the Gram determinant of the family $(m_1,\dots,m_{\rkn-2},\iota_{n_0}\omega)$ evaluated on the first $\rkn-1$ vectors of the basis of $N$. Viewing the column where we evaluate at $-n_0$ as the column of an $\alpha$ coordinate, one can again apply the following lemmas to do a last pruning. However, as the order is not the same as in the lemmas, we might get some extra minus sign as follows:
\begin{itemize}[label=$\circ$]
\item If $W_\mathrm{root}$ is trivalent, the last column of the evaluation matrix has only the last two rows with non-zero coefficients, which are $\omega_{W_\mathrm{root}}$ and $-\omega_{W_\mathrm{root}}$. We add the last row to the previous and develop to get $\omega_{W_\mathrm{root}}\sigma_0$. Since, $n_1+n_2=-n_0$, the sign changes when passing from $\iota_{n_1+n_2}\omega$ to $\iota_{n_0}\omega$.
\item If the vertex is quadrivalent on the right, we still have a $\beta$ coordinate, and the order is the same as in the corresponding lemma. Thus, we develop accordingly, and get $-\omega_{W_\mathrm{root}}^2\sigma_0$.
\item If the vertex is left-quadrivalent, we proceed similarly, but as the coordinates are in the reverse order regarding the order of the lemma, we get this time $\omega_{W_\mathrm{root}}^2\sigma_0$.
\item Finally, for a pentavalent vertex, we also need to permute two coordinates for getting back to the corresponding lemma, and we get $-\omega_{W_\mathrm{root}}\sigma_0$.
\end{itemize}
Hence, up to a positive scalar,
$$\det\ev(\Gamma)=\epsilon(-1)^{\sum_{\mathcal{V}(\Gamma)} 1+\hat{r}(\lambda_l)}\prod_{W\in\mathcal{V}(\Gamma)}\omega_W \sigma_0,$$
where $\epsilon$ is the sign depending on the nature of $W_\mathrm{root}$ and provided by the previous disjonction, equal to $-1$ if the second branch starts with a complex edge and $1$ else, and $\hat{r}(\lambda_l)$ is $1$ on the first branch and $2$ on the second one, comes from the lemmas.

\medskip

Furthermore, one has that $1+\hat{r}$ takes even value on the first branch, and odd on the second branch. Thus, the contribution of $\sum_{\mathcal{V}(\Gamma)} 1+\hat{r}$ to the global sign is equal to the number of trivalent and pentavalent vertices in the second branch. Doing a pruning, we see that this number is equal mod $2$ to the number $L_\mathrm{right}$ of leaves in the branch, minus $1$ if the branch does not start with a complex edge. Therefore, taking into account the sign $\epsilon$, equal to $-1$ when the branch starts with a complex edge, we get that $\epsilon(-1)^{\sum 1+\hat{r}}=(-1)^{L_\mathrm{right}+1}$ in any case.

\medskip

Meanwhile, if $S\neq 0$, we have that $\oo(\nu_{1,l_\mathrm{root}+1})=(-1)^{L_\mathrm{left}-1}\oo(\eta)$. As $L_\mathrm{left}+L_\mathrm{right}=R+2S$, in total, we get that
$$\varepsilon(\gamma)(-1)^{|J|}\left.\frac{\ev^*\oo(X)}{\oo(N)\oo(\eta)}\right|_\Gamma = (-1)^{R+2S}\sigma(\Gamma)\sigma_0.$$
Thus, we get the result. If $S=0$ and there is no complex marked point, we have
$$\oo(\nu_{1,l_\mathrm{root}+1})=(-1)^{L_\mathrm{left}-1}(-1)^N\oo(\nu_{1,N}),$$
and we still get the result as $L_\mathrm{left}+L_\mathrm{right}=N$.
\end{proof}

We now get to the lemmas enabling the pruning. For each vertex $W$, the parametrization of the curve writes itself
$$f_W:y\longmapsto \xi_W+\sum_{k\in\mathcal{F}(W)} 2n_k\log(y-\nu_W(\lambda_k))
+\sum_{k\notin\mathcal{F}(W)} 2n_k\log\left(\frac{y}{\nu_W(\lambda_k)}-1\right),$$
where $\mathcal{F}(W)$ denotes the sons of $W$ in $\Gamma$, whether they are real or complex. If $W$ is the son of $V$, and $\gamma$ is the edge between them, using $\nu_V=\beta+\alpha\nu_W$, following \cite{blomme2020computation} or \cite{tyomkin2017enumeration}, we get that the relation between $\xi_V$ and $\xi_W$ is
$$\xi_W=\xi_V+2n_\gamma\log\alpha_\gamma+\sum_{\mathcal{F}(V)\backslash \mathcal{F}(W)}2n_k\log(\nu_V(\lambda_k)-\beta_\gamma)+\sum_{k\notin\mathcal{F}(V)}2n_k\log\left(1-\frac{\beta_\gamma}{\nu_V(\lambda_k)}\right),$$
We see that at first order, the second sum vanishes. In particular, we see that a coordinate $\alpha_\gamma$ only interfers in the moment of the leaves in its branch, and a $\beta$ coordinate only interfers in the moments for the sons of its associated vertex. Near the tropical limit, at first order, we get the following partial derivatives:
\begin{itemize}[label=$\bullet$]
\item For a real end and real $\alpha$,
$$\frac{\partial\mu_k}{\partial\log\alpha_\gamma} =2\omega(n_k,n_\gamma) .$$
\item For a complex end and real $\alpha$,
$$\frac{\partial\mu_k}{\partial\log\alpha_\gamma} =\begin{pmatrix}
2\omega(n_k,n_\gamma) \\
0 \\
\end{pmatrix} .$$
\item For a complex end and complex $\alpha$,
$$\frac{\partial\mu_k}{\partial\log\alpha_\gamma} =\begin{pmatrix}
2\omega(n_k,n_\gamma) & 0 \\
0 & 2\omega(n_k,n_\gamma)\\
\end{pmatrix} .$$
\item For a quadrivalent vertex on the left, the $\beta$ coordinate is the imaginary part of the complex edge in the coordinate $\nu_W$, and we have
$$\frac{\partial\mu_k}{\partial\beta}=\left\{ \begin{array}{l}
2\omega(n_k,n_{\gamma_2})\frac{\beta-i}{\beta^2+1}\in\CC \text{ on the left (complex) side,} \\
2\omega(n_k,n_{\gamma_1})\frac{2\beta}{\beta^2+1} \in\RR \text{ on the right (real) side.} \\
\end{array}\right. $$
\item For a quadrivalent vertex on the right, the $\beta$ coordinate is the imaginary part of the complex edge in the coordinate $\nu_W$, and we get
$$\frac{\partial\mu_k}{\partial\beta}=\left\{ \begin{array}{l}
2\omega(n_k,n_{\gamma_2})\frac{\beta+i}{\beta^2+1} \in\CC\text{ on the right (complex) side,} \\
2\omega(n_k,n_{\gamma_1})\frac{2\beta}{\beta^2+1} \in\RR \text{ on the left (real) side.} \\
\end{array}\right.  $$
\item For a pentavalent vertex, let $(u,v)$ be the imaginary parts of the complex markings in coordinate $\nu_W$, so that their total coordinates are $iu$ and $1+iv$. On the left side,
\begin{align*}
\frac{\partial\mu_k}{\partial u}= \omega(n_k,n_{\gamma_2}) \frac{-2(u+i)}{(iu-1-iv)(iu-1+iv)} , \\
\frac{\partial\mu_k}{\partial v}= \omega(n_k,n_{\gamma_2}) \frac{2v}{(iu-1-iv)(iu-1+iv)}, \\
\end{align*}
and on the right side,
\begin{align*}
\frac{\partial\mu_k}{\partial u}= \omega(n_k,n_{\gamma_1}) \frac{2u}{(1+iv-iu)(1+iv+iu)}, \\
\frac{\partial\mu_k}{\partial v}= \omega(n_k,n_{\gamma_1}) \frac{2(-v+i)}{(1+iv-iu)(1+iv+iu)}.\\
\end{align*}
\end{itemize}

For a leaf $\lambda_k$, let $r(\lambda_k)$ denotes its rank among the leaves, with complex leaves counted twice since they contribute two coordinates. Thus, the rank among the coordinates is $r(\lambda_k)+1$ for a real leaf or the real part of a complex leaf, and $r(\lambda_k)+2$ for the imaginary part of a complex leaf. Let also $\hat{r}(\lambda_k)$ be the number of hatted coordinates before $\lambda_k$, equal to $1$ for $\lambda_k$ in the left part of $W_\mathrm{root}$ since we only forget $\re\nu_\mathrm{root}(\lambda_1)$, and $2$ on the right part since we also forget $\re\nu_\mathrm{root}(\lambda_{l_\mathrm{root}+1})$. Hence, the index of the row associated to a leaf $\lambda_k$ in the evaluation matrix is $\rkn-2+r(\lambda_k)+1(+1)$ for a real leaf (the $(+1)$ for the imaginary part), while the index of the corresponding column is $\rkn+r(\lambda_k)+1(+1)-\hat{r}(\lambda_k)$.

\medskip

In each of the following lemmas, we only write the columns of the Jacobian matrix corresponding to the coordinates associated to the pruned vertex $W$, and the rows corresponding to the leaves adjacent to $W$.

\begin{lem}
If $W_l$ is a trivalent real vertex adjacent to two real leaves, then one has
$$\det\ev(\Gamma)=(-1)^{1+\hat{r}(\lambda_{l+1})}\omega_{W_l}\det\ev(\widehat{\Gamma}),$$
where $\widehat{\Gamma}$ is the curve where $W_l$ has been removed and transformed into an end with the same direction.
\end{lem}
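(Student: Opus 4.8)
The plan is to compute the determinant of the Jacobian matrix of the evaluation map by expanding along the two columns associated to the pruned vertex $W_l$, using the explicit first-order partial derivatives recorded just before the lemma. Since $W_l$ is trivalent and real with two real leaves $\lambda_l$ and $\lambda_{l+1}$, only one genuinely new coordinate is introduced: the coordinate $\alpha_\gamma$ (sitting in place of $\re\nu_{\mathrm{root}}(\lambda_{l+1})$) associated to the edge $\gamma$ joining $W_l$ to its parent. First I would recall that, by the relation $\xi_W=\xi_V+2n_\gamma\log\alpha_\gamma+\cdots$, near the tropical limit the coordinate $\log\alpha_\gamma$ interferes only in the moments of the leaves lying in the branch below $W_l$, namely $\lambda_l$ and $\lambda_{l+1}$, with $\partial\mu_k/\partial\log\alpha_\gamma = 2\omega(n_k,n_\gamma)$ for $k\in\{l,l+1\}$ and $0$ otherwise.

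Next I would isolate the $2\times 2$ block of the Jacobian formed by the rows indexed by $\lambda_l,\lambda_{l+1}$ and the columns indexed by the two coordinates attached to $W_l$. One column is $\log\alpha_\gamma$, contributing entries $2\omega(n_{\lambda_l},n_\gamma)$ and $2\omega(n_{\lambda_{l+1}},n_\gamma)$; the other column is the $\nu_l$-coordinate of one of the two adjacent leaves — say $\nu_l(\lambda_{l+1})$ — which after the change of coordinates only affects $\mu_{\lambda_l}$ and $\mu_{\lambda_{l+1}}$ through the curve $C_{W_l}$ itself. Using $n_{\lambda_l}+n_{\lambda_{l+1}}+n_\gamma=0$ at the trivalent vertex, one sees the determinant of this $2\times 2$ block equals (up to a positive scalar coming from the finite nonzero values of the $\nu_l$-coordinate and the $\log$ normalizations) $\pm 2\,\omega(n_{\mathrm{left}},n_{\mathrm{right}})=\pm\,\omega_{W_l}$, where left/right refer to the ribbon order at $W_l$. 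The sign here is the crux: it is governed by whether the two leaves appear in the Jacobian in the order prescribed by $\tilde\gamma$ or the reversed one, and by the parity shift coming from the hatted coordinates — whence the factor $(-1)^{1+\hat r(\lambda_{l+1})}$.

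I would then perform Laplace (cofactor) expansion of the full determinant along these two columns. Because the $\log\alpha_\gamma$-column and the $\nu_l(\lambda_{l+1})$-column have nonzero entries only in the two rows of $\lambda_l,\lambda_{l+1}$, the expansion factors exactly as (determinant of the $2\times2$ block) times (determinant of the complementary minor), and the complementary minor is precisely $\det\ev(\widehat\Gamma)$, the evaluation matrix of the curve $\widehat\Gamma$ in which $W_l$ has been erased and $\lambda_l,\lambda_{l+1}$ merged into a single end of direction $n_\gamma$ — here one uses that the remaining coordinates and their partials are unchanged at first order, since $\alpha_\gamma$ and $\beta_\gamma$ have been separated out. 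Collecting the sign from the Laplace expansion (a $(-1)$ to a power equal to the sum of the row and column indices of the block, which by the bookkeeping of $r(\lambda_k)$ and $\hat r(\lambda_k)$ set up in the paragraph before the lemma reduces to $(-1)^{1+\hat r(\lambda_{l+1})}$) together with the $\pm\omega_{W_l}$ from the block gives the claimed identity $\det\ev(\Gamma)=(-1)^{1+\hat r(\lambda_{l+1})}\omega_{W_l}\det\ev(\widehat\Gamma)$.

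The main obstacle I expect is the careful sign bookkeeping: matching the row index $\rkn-2+r(\lambda_k)+1$ and the column index $\rkn+r(\lambda_k)+1-\hat r(\lambda_k)$ of each leaf, tracking which of $\lambda_l,\lambda_{l+1}$ is the ``left'' and which the ``right'' leaf in the ribbon order (this fixes whether we get $+\omega_{W_l}$ or $-\omega_{W_l}$ before the hat correction), and verifying that the Laplace sign combines with these to give exactly $(-1)^{1+\hat r(\lambda_{l+1})}$ with no leftover. The analytic content — that the $2\times2$ block has determinant proportional to $\omega_{W_l}$ with a positive proportionality constant — is a routine computation using $n_{\lambda_l}+n_{\lambda_{l+1}}=-n_\gamma$ and the explicit derivative $\partial\mu_k/\partial\log\alpha_\gamma=2\omega(n_k,n_\gamma)$, so the real work is the index arithmetic rather than anything conceptual.
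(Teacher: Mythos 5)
Your proposal has a genuine structural error: you claim to expand along \emph{two} columns attached to $W_l$, but for a real trivalent vertex with two real leaves there is only \emph{one} column. After the coordinate-change process set up just before the lemmas, the final coordinate system assigns to each non-root trivalent vertex a single real coordinate $\alpha$; the quantity you name as a second column, ``the $\nu_l$-coordinate of $\lambda_{l+1}$,'' is not a coordinate at all, since $\nu_l(\lambda_{l+1})$ is normalized to equal $1$ (and likewise $\nu_l(\lambda_{\kappa(l)})=0$). You appear to have conflated the real trivalent case with the complex trivalent case (where $\alpha\in\CC$ genuinely contributes two real columns, and the paper does expand along a pair of columns) or with the quadrivalent case (which has both an $\alpha$ and a $\beta$ column).

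This is not merely a bookkeeping slip: the $2\times 2$ Laplace expansion you propose cannot produce the right minor. If you delete the two rows $\mu_l,\mu_{l+1}$ along with two columns, the complementary minor has no row for the merged end of $\widehat\Gamma$, so it cannot equal $\det\ev(\widehat\Gamma)$. The correct mechanism is the one the paper uses: observe that the single $\alpha$-column has entries $\omega_{W_l}$ in row $\mu_l$ and $-\omega_{W_l}$ in row $\mu_{l+1}$ and is zero elsewhere; add row $\mu_l$ to row $\mu_{l+1}$, which turns row $\mu_{l+1}$ into the moment row for the merged end $n_l+n_{l+1}$ and zeroes out its $\alpha$-entry; then expand along the $\alpha$-column (now having a single nonzero entry). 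This deletes exactly one row and one column, leaving precisely $\det\ev(\widehat\Gamma)$, and the cofactor sign $(-1)^{(\rkn-1+r(\lambda_l))+(\rkn+1+r(\lambda_{l+1})-\hat r(\lambda_{l+1}))}$ reduces, using $r(\lambda_{l+1})=r(\lambda_l)+1$, to $(-1)^{1+\hat r(\lambda_{l+1})}$. Your index arithmetic is pointed in the right direction, but the preceding matrix manipulation has to be a row operation plus single-column expansion, not a two-column block expansion.
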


\begin{proof}
We consider a real trivalent vertex $W_l$ adjacent to two real ends of $\Gamma$. The only coordinate associated to $W_l$ is the $\alpha$ coordinate $\alpha_l$. With $\omega_W=2\omega(n_l,n_{l+1})$, the Jacobian matrix has then the following form:
\begin{center}
\begin{tabular}{cc}
                        & $\alpha$                       \\ \cline{2-2} 
\multicolumn{1}{c|}{$\mu_l$} & \multicolumn{1}{c|}{$\omega_W$}  \\ \cline{2-2} 
\multicolumn{1}{c|}{$\mu_{l+1}$} & \multicolumn{1}{c|}{$-\omega_W$} \\ \cline{2-2} 
\end{tabular}
\end{center}
Thus, we add the first row $l$ to the second $l+1$ and develop regarding the column. The column is in the place of the coordinate of $\nu_\mathrm{root}(\lambda_{l+1})$, ranked $\rkn+1+r(\lambda_{l+1})-\hat{r}(\lambda_{l+1})$ and the row is ranked $\rkn-1+r(\lambda_{l})$. As $r(\lambda_{l+1})=r(\lambda_l)+1$, we have 
$$\det\ev(\Gamma)=(-1)^{1+\hat{r}(\lambda_{l+1})}\omega_W\det\ev(\widehat{\Gamma}),$$
where $\hat{\Gamma}$ is the tropical curve where $W_l$ has been removed and replaced by an end with the same direction.
\end{proof}

\begin{lem}
If $W_l$ is a trivalent complex vertex, then we have
$$\det\ev(\Gamma)=\omega_{W_l}^2\det\ev(\widehat{\Gamma}),$$
where $\widehat{\Gamma}$ the curve where $W_l$ has been removed and transformed into an end with the same direction.
\end{lem}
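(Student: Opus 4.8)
The plan is to run the same cofactor expansion as in the previous lemma, with the single deleted row and single deleted column there now replaced by a pair of consecutive rows and a pair of consecutive columns; this doubling is precisely what annihilates the sign. Keeping the notation of the pruning, since $W_l$ is a complex trivalent vertex its two descending edges reach complex leaves $\lambda_l,\lambda_{l+1}$ (so $\kappa(l)=l$), and the edge $\gamma$ joining $W_l$ to its parent is complex, with $n_\gamma=n_l+n_{l+1}$ by the balancing condition. After the inductive normalization of coordinates, the only modulus attached to $W_l$ is the complex gluing parameter $\alpha_\gamma$, occupying the two consecutive real column-slots of the complex coordinate $\nu_\mathrm{root}(\lambda_{l+1})$, while the moments $\mu_l$ and $\mu_{l+1}$ occupy two consecutive pairs of real rows of the target.

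First I would pass to the tropical limit and replace the Jacobian of $\ev$ by its limiting tropical evaluation matrix. By the first order formula for a complex end and a complex $\alpha$, and since $\alpha_\gamma$ only affects the moments of the leaves in its branch, the two columns of $\alpha_\gamma$ have nonzero entries only in the rows of $\mu_l$ and $\mu_{l+1}$, where they equal the $2\times2$ blocks $2\omega(n_l,n_\gamma)\,\mathrm{Id}_2$ and $2\omega(n_{l+1},n_\gamma)\,\mathrm{Id}_2$; using $n_\gamma=n_l+n_{l+1}$ these are $\omega_{W_l}\,\mathrm{Id}_2$ and $-\omega_{W_l}\,\mathrm{Id}_2$, exactly as in the real trivalent lemma. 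Passing from $\alpha_\gamma$ to $\log\alpha_\gamma$ only rescales by a Jacobian of positive determinant and is irrelevant for signs.

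Next I would add the two rows of $\mu_l$ to the two rows of $\mu_{l+1}$. The determinant is unchanged, and since $\omega(n_l+n_{l+1},n_\gamma)=\omega(n_\gamma,n_\gamma)=0$ the new $\mu_{l+1}$-rows become the rows of the merged moment $\mu_\gamma:=\mu_l+\mu_{l+1}$ -- at first order the moment of the end of direction $n_\gamma$ created by pruning -- and they vanish in the two columns of $\alpha_\gamma$. Then I would Laplace-expand the determinant along these two columns: the only nonvanishing complementary $2\times2$ minor sits in the rows of $\mu_l$ and equals $\det(\omega_{W_l}\mathrm{Id}_2)=\omega_{W_l}^2$, while the accompanying Laplace sign is $(-1)^{(i+(i+1))+(j+(j+1))}=(-1)^{2(i+j)+2}=+1$, where $\{i,i+1\}$ and $\{j,j+1\}$ are the positions of the deleted rows and columns. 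This is where the complex case departs from the real trivalent one: each deleted block is a consecutive pair, so the two sign contributions cancel. The remaining matrix is, by the very construction of the normalized coordinates, the tropical evaluation matrix of $\widehat\Gamma$ (the former $\mu_{l+1}$-rows now carrying $\mu_\gamma$, the $\nu_\mathrm{root}(\lambda_l)$-slot now carrying the coordinate of the merged leaf), which yields $\det\ev(\Gamma)=\omega_{W_l}^2\det\ev(\widehat\Gamma)$.

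The main obstacle is the index bookkeeping in the Laplace step: one has to check carefully that after normalization $\alpha_\gamma$ genuinely occupies the complex slot of $\lambda_{l+1}$ and that the moments of $\lambda_l,\lambda_{l+1}$ occupy consecutive complex slots of the target, so that both deleted blocks really are consecutive pairs, and that the complementary minor coincides on the nose with the Jacobian of $\ev(\widehat\Gamma)$. The special configurations in which $\lambda_l$ or $\lambda_{l+1}$ is one of the two leaves used to fix $\nu_\mathrm{root}$ (so that its column-slot is hatted) are to be handled exactly as in the previous lemma, by reinterpreting that slot as the slot of the merged leaf.
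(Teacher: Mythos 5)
Your proposal is correct and follows essentially the same route as the paper: you write down the same $4\times 2$ block (the two $2\times 2$ scalar matrices $\omega_{W_l}\,\mathrm{Id}_2$ and $-\omega_{W_l}\,\mathrm{Id}_2$ arising from the first-order formula for a complex $\alpha$-coordinate and the balancing relation $n_\gamma=n_l+n_{l+1}$), add the $\mu_l$-rows to the $\mu_{l+1}$-rows, and do a Laplace expansion along the pair of $\alpha_\gamma$-columns, observing that the cofactor sign $(-1)^{(i+(i+1))+(j+(j+1))}$ of a consecutive pair is automatically $+1$. This is exactly the paper's remark that ``since we develop regarding a pair of columns, we do not need to care about the sign of the cofactor''; your version simply makes the index bookkeeping more explicit.
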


\begin{proof}
We consider a complex trivalent vertex $W_l$ adjacent to two complex ends of $\Gamma$. The only coordinate associated to $W_l$ is the complex $\alpha$ coordinate $\alpha_l$. With $\omega_W=2\omega(n_l,n_{l+1})$, the Jacobian matrix has then the following form:
\begin{center}
\begin{tabular}{ccc}
                                         & \multicolumn{2}{c}{$\alpha$}            \\ \cline{2-3} 
\multicolumn{1}{c|}{\multirow{2}{*}{$\mu_l$}} & $\omega_W$  & \multicolumn{1}{c|}{$0$}      \\
\multicolumn{1}{c|}{}                    & $0$      & \multicolumn{1}{c|}{$\omega_W$}  \\ \cline{2-3} 
\multicolumn{1}{c|}{\multirow{2}{*}{$\mu_{l+1}$}} & $-\omega_W$ & \multicolumn{1}{c|}{$0$}      \\
\multicolumn{1}{c|}{}                    & $0$      & \multicolumn{1}{c|}{$-\omega_W$} \\ \cline{2-3} 
\end{tabular}
\end{center}
Thus, we add the pair of rows corresponding to $l$ and $l+1$ and develop regarding the pair of columns. Since we develop regarding a pair of columns, we do not need to care about the sign of the cofactor. Thus, we get
$$\det\ev(\Gamma)=\omega_W^2\det\ev(\widehat{\Gamma}),$$
where $\widehat{\Gamma}$ is the tropical curve where $W_l$ has been removed and replaced by a complex end with the same direction.
\end{proof}

\begin{lem}
If $W_l$ is a quadrivalent vertex, and $\beta$ is the coordinate associated to the vertex, then one has
$$\det\ev(\Gamma)=\omega_W^2\frac{1}{\beta^2+1}\det\ev(\widehat{\Gamma}),$$
where $\widehat{\Gamma}$ is the curve where $W_l$ has been replaced by a real end with the same direction.
\end{lem}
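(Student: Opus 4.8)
The plan is to carry out the same local linear-algebra reduction as in the three preceding lemmas, now for a quadrivalent vertex $W_l$. After the earlier pruning steps, $W_l$ is adjacent to its parent edge $\gamma_0$, to one real leaf, and to one complex leaf; because $\widehat\Gamma$ must carry a real end at $W_l$, the edge $\gamma_0$ is real and contributes a real $\alpha$-coordinate, while $W_l$ contributes the real $\beta$-coordinate of the statement. These are exactly the two coordinates of $\MM(\tilde\gamma,J)$ that disappear when $W_l$ is pruned, and my first task would be to check, using the conventions of Section~\ref{section orientation moduli space}, that the two corresponding columns of $\ev$ occupy two \emph{consecutive} slots — this holds both when $W_l$ is left- and when it is right-quadrivalent, although the precise pair of slots is different in the two cases. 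On the row side, the three real coordinates that disappear are those of the two leaf moments, and $\Re\mu$ and $\Im\mu$ of the complex leaf form a consecutive pair.

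Next I would perform a single row operation: replace the real leaf's row by itself plus twice the real part of the complex leaf's row. This leaves the determinant unchanged, and at first order near the tropical limit the resulting row is the moment row of the merged real end $\hat\lambda$ of $\widehat\Gamma$, since the direction of $\hat\lambda$ is the direction $n_{\gamma_0}$ of the parent edge and $\iota_{n_{\mathrm{real}}}\omega+2\,\iota_{n_{\mathrm{cplx}}}\omega=\iota_{n_{\gamma_0}}\omega$ by the balancing condition at $W_l$ (the complex edge entering with weight $2$). The essential point is that this new row has vanishing entries in both the $\alpha_{\gamma_0}$- and the $\beta$-column: the $\alpha_{\gamma_0}$-entry is $2\omega(n_{\gamma_0},n_{\gamma_0})=0$, and the two $\beta$-entries cancel because $\Re\frac{\beta\pm i}{\beta^2+1}=\frac{\beta}{\beta^2+1}$ is exactly half of the real leaf's coefficient $\frac{2\beta}{\beta^2+1}$ and carries the opposite sign. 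Consequently, after the operation, the columns $\alpha_{\gamma_0}$ and $\beta$ are supported only on the two consecutive rows $\Re\mu,\Im\mu$ of the complex leaf, so a Laplace expansion of $\det\ev(\Gamma)$ along these two columns reduces to a single $2\times2$ minor times the complementary minor, and the latter is — row by row and column by column, at first order — exactly $\ev(\widehat\Gamma)$.

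It then remains to evaluate that $2\times2$ minor, which, after substituting the balancing relation into the partial-derivative formulas recorded just before the lemma, comes out to $\omega_W^2\,\frac{1}{\beta^2+1}$ in both the left- and the right-quadrivalent case, and to compute the Laplace cofactor sign, which is $(-1)^{2i+2c+2}=+1$ precisely because the two eliminated columns and the two eliminated rows are each a consecutive pair. Combining these gives $\det\ev(\Gamma)=\omega_W^2\,\frac{1}{\beta^2+1}\det\ev(\widehat\Gamma)$; note that the absence of a sign here is what reflects the exclusion of quadrivalent vertices from $\mathcal V(\Gamma)$. The main obstacle is this last piece of sign bookkeeping: one has to pin down, separately for a left- and a right-quadrivalent $W_l$, exactly where the $\alpha_{\gamma_0}$- and $\beta$-columns sit among all the coordinates, and confirm that no transposition is needed to bring them into the position used in the computation — a transposition would introduce the kind of extra sign that does arise at the root in the proof of Theorem~\ref{prop orientations tropical limit}. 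Everything else is the routine $2\times2$ determinant together with the verification that the merged-end row agrees at first order with a genuine row of $\ev(\widehat\Gamma)$.
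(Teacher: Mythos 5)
Your overall plan — a single row operation to kill both the $\alpha_{\gamma_0}$- and $\beta$-columns in one row, followed by a Laplace expansion along the remaining two columns against the complex leaf's two rows — is the same strategy the paper uses, and the $2\times 2$ minor and cofactor sign you end up evaluating do give $\omega_W^2/(\beta^2+1)$ with the correct sign. But the specific row operation you propose is not the one the paper carries out, and you should be aware that it does not cancel the $\alpha$-column in the matrix the paper actually displays.

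Concretely, for a right-quadrivalent $W_l$ the paper writes the $(\alpha,\beta)$-block of the Jacobian as
\begin{equation*}
\begin{pmatrix} \omega_W & \omega_W\frac{2\beta}{\beta^2+1} \\ -\omega_W & -\omega_W\frac{\beta}{\beta^2+1} \\ 0 & -\omega_W\frac{1}{\beta^2+1} \end{pmatrix},
\end{equation*}
whose left kernel is spanned by $(1,1,\beta)$: the paper therefore adds the first row plus $\beta$ times the third row to the \emph{middle} row (i.e.\ to $\Re\mu_{l+1}$), not $2\Re\mu_{l+1}$ to $\mu_l$. Your combination $(1,2,0)$ applied to this block leaves $\alpha$-entry $\omega_W+2(-\omega_W)=-\omega_W\neq0$ even though the $\beta$-entry does vanish, so the Laplace expansion along the $(\alpha,\beta)$-columns would not isolate the single $2\times2$ minor you want. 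The paper then develops along the first and third rows; the minor is $-\omega_W^2/(\beta^2+1)$ and the cofactor sign $(-1)^{2i+2j+3}=-1$ restores the stated positive factor, so no sign issue is being swept under the rug there.

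Your reasoning that $\iota_{n_{\mathrm{real}}}\omega+2\iota_{n_{\mathrm{cplx}}}\omega$ should realize the merged end $n_{\gamma_0}$, and hence produce $\alpha$-entry $2\omega(n_{\gamma_0},n_{\gamma_0})=0$, is indeed what one expects from the tropical balancing $n_{\gamma_0}=n_{\mathrm{real}}+2n_{\mathrm{cplx}}$ combined with the stated formula $\partial_{\log\alpha_\gamma}\mu_k=2\omega(n_k,n_\gamma)$; that would force the $\alpha$-column to be $(2\omega_W,-\omega_W,0)$ rather than $(\omega_W,-\omega_W,0)$, and with that column your $(1,2,0)$ combination is exactly the left-kernel vector, while the paper's $(1,1,\beta)$ is not. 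So there is a genuine tension between the paper's displayed matrix and the weighted balancing you invoke — the same factor-of-two discrepancy appears in the complex-trivalent and pentavalent blocks, where the paper's entries are consistent only if one takes $n_{\gamma_0}=n_l+n_{l+1}$ (complex edges counted once) rather than $n_l+2n_{l+1}$. Either the displayed matrices carry a typo in the $\alpha$-column, in which case your row operation is the correct one and the paper's would need adjustment, or the author is implicitly working with the unweighted effective slope, in which case your $(1,2,0)$ fails and the paper's $(1,1,\beta)$ is correct. You should resolve this explicitly before relying on your version; as written, the cancellation you assert in the $\alpha$-column is not justified against the matrix as printed. Fortunately the $2\times 2$ minor, the cofactor sign, and hence the lemma's conclusion come out the same under both readings, so the discrepancy affects only the intermediate bookkeeping and not the final formula $\omega_W^2/(\beta^2+1)$.
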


\begin{proof}
We consider a quadrivalent vertex $W_l$ adjacent to a complex end and a real end. The coordinates associated to $W_l$ are its $\alpha$ coordinate, and the coordinate $\beta$, equal to the imaginary part of the complex end. They are in the order $(\beta,\alpha)$ if the vertex is quadrivalent on the left, and $(\alpha,\beta)$ if quadrivalent on the right. First, assume that it is quadrivalent on the right. The Jacobian matrix is as follows:
\begin{center}
\begin{tabular}{ccc}
                                         & $\alpha$                   & $\beta$                       \\ \cline{2-3} 
\multicolumn{1}{c|}{$\mu_l$}                  & \multicolumn{1}{c|}{$\omega_W$} & \multicolumn{1}{c|}{$\omega_W\frac{2\beta}{\beta^2+1}$}  \\ \cline{2-3} 
\multicolumn{1}{c|}{\multirow{2}{*}{$\mu_{l+1}$}} & \multicolumn{1}{c|}{$-\omega_W$} & \multicolumn{1}{c|}{$-\omega_W\frac{\beta}{\beta^2+1}$} \\
\multicolumn{1}{c|}{}                    & \multicolumn{1}{c|}{$0$} & \multicolumn{1}{c|}{$-\omega_W\frac{1}{\beta^2+1}$}      \\ \cline{2-3} 
\end{tabular}
\end{center}
We add to the central row the first one and $\beta$ times the third. Then, we develop regarding the pair of columns and the first and third rows, to get
$$\det\ev(\Gamma)=\omega_W^2\frac{1}{\beta^2+1}\det\ev(\widehat{\Gamma}),$$
where $\widehat{\Gamma}$ is the curve where $W_l$ has been replaced by a real end.

If $W_l$ is quadrivalent on the left, the coordinates associated to $W_l$, the Jacobian matrix is this time
\begin{center}
\begin{tabular}{ccc}
                                         & $\beta$                       & $\alpha$                              \\ \cline{2-3} 
\multicolumn{1}{c|}{\multirow{2}{*}{$\mu_l$}} & \multicolumn{1}{c|}{$\omega_W\frac{\beta}{\beta^2+1}$}  & \multicolumn{1}{c|}{$\omega_W $}   \\
\multicolumn{1}{c|}{}                    & \multicolumn{1}{c|}{$\omega_W \frac{-1}{\beta^2+1}$}      & \multicolumn{1}{c|}{$0$}        \\ \cline{2-3} 
\multicolumn{1}{c|}{$\mu_{l+1}$}                  & \multicolumn{1}{c|}{$-\omega_W \frac{2\beta}{\beta^2+1}$} & \multicolumn{1}{c|}{$-\omega_W$} \\ \cline{2-3} 
\end{tabular}
\end{center}
and we proceed similarly to get the result: use the last two rows to cancel the first one and then develop to obtain once again
$$\det\ev(\Gamma)=\omega_W^2\frac{1}{\beta^2+1}\det\ev(\widehat{\Gamma}).$$
\end{proof}

\begin{lem}
If $W_l$ is a pentavalent vertex, up to positive scalar, one has
$$\det\ev(\Gamma)  = (-1)^{1+\hat{r}(\lambda_{l+1})}\det\ev(\widehat{\Gamma}).$$
\end{lem}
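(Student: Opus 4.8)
I would run the same pruning argument, one dimension up, that was carried out for the trivalent and quadrivalent vertices. A pentavalent vertex $W_l$ of $\Gamma$ carries, besides its (real) parent edge $\gamma$, two pairs of conjugate leaf-edges, hence two complex markings; write $\lambda_{\kappa(l)}$ for the one with $\nu_{W_l}$-coordinate $iu$ and $\lambda_{l+1}$ for the one with $\nu_{W_l}$-coordinate $1+iv$, where $u,v>0$ since the recorded puncture of each pair is the one lying in $S$. After the inductive change of variables the coordinates attached to $W_l$ are the real $\alpha=\alpha_\gamma$ (sitting in the column of $\re\nu_{\mathrm{root}}(\lambda_{l+1})$) together with the two real coordinates $u,v$ (in the columns of $\im\nu_{\mathrm{root}}(\lambda_{\kappa(l)})$ and $\im\nu_{\mathrm{root}}(\lambda_{l+1})$), while the relevant rows are $\re\mu_{\kappa(l)},\im\mu_{\kappa(l)},\re\mu_{l+1},\im\mu_{l+1}$. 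First I would write out this $4\times 3$ block from the derivatives already tabulated: the $\alpha$-column has entries $(2\omega(n_{\kappa(l)},n_\gamma),0)$ and $(2\omega(n_{l+1},n_\gamma),0)$, and the $u,v$-columns are the two displayed pentavalent formulas, with $\lambda_{\kappa(l)}$ in the "left" and $\lambda_{l+1}$ in the "right" rôle. Balancing at $W_l$ forces $n_\gamma$ to be an integral combination of $n_{\kappa(l)}$ and $n_{l+1}$, so every entry of the block is $\omega_{W_l}=\omega(n_{\kappa(l)},n_{l+1})$ times a rational function of $u,v$; in particular $\omega_{W_l}$ factors out of each of the three would-be block rows, producing a factor $\omega_{W_l}^{3}$ whose sign, exactly as in the trivalent real lemma, is $\mathrm{sgn}\,\omega_{W_l}$ (this is the power carried along into $\prod_{W\in\mathcal V(\Gamma)}\omega_W$; the statement records only the extra combinatorial sign).

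Next I would perform the row operation isolating the pruned vertex, as in the quadrivalent lemma. The linear combination of the four rows killing all three columns $\alpha,u,v$ is the essentially unique solution of a homogeneous $3\times 4$ system: the $\alpha$-column already forces the coefficients of $\re\mu_{\kappa(l)}$ and $\re\mu_{l+1}$ to agree, and the remaining two equations fix the coefficients of $\im\mu_{\kappa(l)},\im\mu_{l+1}$ as explicit functions of $u,v$. To leading order near the tropical limit this combined row is the moment row of the new real end obtained by smoothing $W_l$ away, so placing it in the slot of $\re\mu_{l+1}$ leaves $\re\mu_{\kappa(l)},\im\mu_{\kappa(l)},\im\mu_{l+1}$ as the only rows meeting the columns $\alpha,u,v$, and the complementary minor is $\det\ev(\widehat\Gamma)$. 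Expanding along these three columns gives $\det\ev(\Gamma)=\varepsilon\,\omega_{W_l}^{3}\,\det(M)\,\det\ev(\widehat\Gamma)$, where $M$ is the $3\times 3$ matrix of rational functions and $\varepsilon$ is the Laplace cofactor sign of the (consecutive) positions of these three rows and three columns.

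It remains to pin down $\det(M)$ and $\varepsilon$. Expanding $M$ along its $\alpha$-column, which is proportional to $(-1,0,0)$, reduces $\det(M)$ to a $2\times 2$ determinant of imaginary parts; with $D_{\kappa(l)}=(iu-1-iv)(iu-1+iv)$ and $D_{l+1}=(1+iv-iu)(1+iv+iu)$ a short computation gives the four entries $\im\frac{-2(u+i)}{D_{\kappa(l)}}=\frac{-2(1+u^2+v^2)}{|D_{\kappa(l)}|^2}$, $\im\frac{2v}{D_{\kappa(l)}}=\frac{4uv}{|D_{\kappa(l)}|^2}$, $\im\frac{-2u}{D_{l+1}}=\frac{4uv}{|D_{l+1}|^2}$, $\im\frac{-2(-v+i)}{D_{l+1}}=\frac{-2(1+u^2+v^2)}{|D_{l+1}|^2}$, whose determinant is $\dfrac{4\,\bigl(1+(u-v)^2\bigr)\bigl(1+(u+v)^2\bigr)}{|D_{\kappa(l)}|^2\,|D_{l+1}|^2}>0$ for all $u,v>0$; hence $\det(M)$ has constant sign, and up to a positive scalar $\det\ev(\Gamma)=\pm\,\omega_{W_l}\,\det\ev(\widehat\Gamma)$. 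Finally I would compute the overall sign: since $\lambda_{\kappa(l)}$ and $\lambda_{l+1}$ are consecutive complex leaves, $r(\lambda_{l+1})=r(\lambda_{\kappa(l)})+2$ and the three columns and four rows form consecutive blocks, so after accounting for the hatted real parts and the minus sign from the $\alpha$-column the total collapses to $(-1)^{1+\hat r(\lambda_{l+1})}$, exactly as in the trivalent real lemma. The genuine obstacle is precisely this last piece of bookkeeping — keeping track of which $\re\nu_{\mathrm{root}}(\lambda_j)$ are hatted, of the exact row- and column-indices of the $3\times 3$ block inside the full evaluation matrix, and of how the coefficients of the killing combination interact with the several Laplace cofactor signs — whereas the positivity of the $2\times 2$ determinant and the row reduction itself are routine adaptations of the quadrivalent case.
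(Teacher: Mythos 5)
Your proposal is correct and follows essentially the same pruning computation as the paper: the paper uses the $\alpha$-column to clear rows $1,3$ of the $u$- and $v$-columns and then Laplace-expands, while you perform the dual row reduction and then expand, but both arrive at the same $2\times 2$ minor of imaginary parts with determinant proportional to $\bigl(1+(u-v)^2\bigr)\bigl(1+(u+v)^2\bigr)>0$, after which the leftover $\alpha$-column is handled exactly as in the trivalent real case. You also rightly observed that the pruned vertex contributes an $\omega_{W_l}$ to $\prod_{W\in\mathcal{V}(\Gamma)}\omega_W$ (the overall factor is $\omega_{W_l}^{3}$, positive-scalar equivalent to $\omega_{W_l}$); as written, the lemma's formula omits this factor, which is consistent with its use in the main theorem but not literally true ``up to positive scalar'' when $\omega_{W_l}<0$.
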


\begin{proof}
Let $W_l$ be a pentavalent vertex adjacent to two complex ends. The coordinates associated to $W_l$ are the $\alpha$ coordinate $\nu_L(\lambda_{l+1})-\nu_L(\lambda_{\kappa(l)})$ and the pair $(u,v)$ of coordinates which correspond to the imaginary parts of the complex markings. Those markings thus have coordinates $\nu_l$ equal to $iu$ and $1+iv$. Up to a division by $(1+v^2-u^2)^2+4u^2$ of the first and third columns, the Jacobian matrix is as follows:
\begin{center}
\begin{tabular}{cccc}
                                         & $u$                             & $\alpha$                       & $v$                             \\ \cline{2-4} 
\multicolumn{1}{c|}{\multirow{2}{*}{$\mu_l$}} & \multicolumn{1}{c|}{$\omega_W u(1+u^2-v^2)$} & \multicolumn{1}{c|}{$\omega_W$}  & \multicolumn{1}{c|}{$\omega_W v(1+v^2-u^2)$} \\
\multicolumn{1}{c|}{}                    & \multicolumn{1}{c|}{$\omega_W(-1-u^2-v^2)$}   & \multicolumn{1}{c|}{$0$}      & \multicolumn{1}{c|}{$\omega_W 2uv$}      \\ \cline{2-4} 
\multicolumn{1}{c|}{\multirow{2}{*}{$\mu_{l+1}$}} & \multicolumn{1}{c|}{$-\omega_W u(1+u^2-v^2)$} & \multicolumn{1}{c|}{$-\omega_W$} & \multicolumn{1}{c|}{$-\omega_W v(1+v^2-u^2)$} \\
\multicolumn{1}{c|}{}                    & \multicolumn{1}{c|}{$-\omega_W (-2uv)$}      & \multicolumn{1}{c|}{$0$}      & \multicolumn{1}{c|}{$-\omega_W(1+u^2+v^2)$}    \\ \cline{2-4} 
\end{tabular}
\end{center}
We use the middle column to cancel the first and third columns first and third rows. We then develop with respect to the considered columns. This multiplies the determinant by
$$\det \begin{pmatrix}
\omega_W(-1-u^2-v^2) & \omega_W 2uv \\
-\omega_W (-2uv) & -\omega_W(1+u^2+v^2) \\
\end{pmatrix}=\omega_W^2(1+(v-u)^2)(1+(u+v)^2)>0.$$
Then, we are left with a unique column having coefficients $\omega_W$ and $-\omega_W$, and we proceed as in the real trivalent case, getting up to positive scalar
$$\det\ev(\Gamma)  = (-1)^{1+\hat{r}(\lambda_{l+1})}\det\ev(\widehat{\Gamma}).$$
\end{proof}

\subsection{The hypersurface of fixed momenta}

Let $\Delta_B=(n_k)\subset N$ be a degree, which is meant to be the degree of the component $C_B$ of a reducible curve $C_A\cup C_B$. To adapt from the previous notations, the cardinal of $\Delta_B$ is then denoted by $q$, and the coordinates of the marked points on the curve will be denoted with a letter $b$. To keep definitions simple, we assume that all the marked points are real, but the statements immediately translate to the case of complex marked points. We consider the marked points to be $(p_0,p_1,\dots,p_q)$, where $p_k$ for $k>0$ are sent to the toric boundary with cocharacter $n_k$, and $p_0$ is an additional marked point mapped to the main strata of the toric variety. In particular, we have the forgetful map
$$\mathrm{ft}:\MM_{0,q+1}^\tau\longrightarrow\MM_{0,q}^\tau,$$
that forgets the position of the marked point $p_0$. This forgetful map can be seen as the universal curve for the moduli space $\MM_{0,q}^\tau$. For a component $\MM(\beta)$ of $\MM_{0,q}^\tau$, $\mathrm{ft}^{-1}\left(\MM(\beta)\right)\subset\MM_{0,q+1}^\tau$ consists in the curves where the cyclic order on the marked points $p_1,\dots,p_q$ is fixed. We can write
$$\mathrm{ft}^{-1}\MM(\beta)=\bigcup_{s=0}^{q-1}\MM(\beta_s),$$
where $\MM(\beta_s)$ is the component of $\MM_{0,q+1}^\tau$ corresponding to the order
$$p_0<\beta(s+1)<\cdots<\beta(q)<\beta(1)<\cdots<\beta(s)<p_0.$$
Moreover, as the fibers of the forgetful map have a natural orientation, an orientation on $\MM(\beta)$ induces an orientation on $\mathrm{ft}^{-1}\left(\MM(\beta)\right)$. We now give three descriptions of the same hypersurface $\mathcal{H}(\mu)$, called \textit{hypersurface of fixed momenta}.

\begin{itemize}[label=$\circ$]
\item First, we use the moment map:
$$\mom:N_\RR\times\MM_{0,q+1}^\tau\longrightarrow \RR^{q-1},$$
that evaluates the moment at the marked points $p_1,\dots,p_{q-1}$. Since we have considered a free additional marked point, for a fixed value $\mu\in\RR^{q-1}$, the preimage, consisting of curves with fixed moments, is a subvariety of dimension $\rkn-1$ inside $N_\RR\times\MM_{0,q+1}^\tau$. Thus, the evaluation at the marked point $p_0$ gives a hypersurface $\mathcal{H}(\mu)=\ev_0(\mom^{-1}(\mu))$ in $N_\RR$. Choosing a coordinate such that $p_0$ is $\infty$, the evaluation $\ev_0$ at $p_0$ is just the projection on the $N_\RR$ factor. If we restrict to $\mathrm{ft}^{-1}\left(\MM(\beta)\right)$, we denote the hypersurface by $\mathcal{H}^\beta(\mu)$.

\item Alternatively, if one forgets the marked point $p_0$ and considers the moment map
$$\mom':N_\RR\times\MM_{0,q}^\tau\longrightarrow\RR^{q-1},$$
the preimage $\mom'^{-1}(\mu)$ is a subvariety of dimension $\rkn-2$, and $\mathcal{H}(\mu)$ can be seen as the union of the loci of these curves. In particular, $\mathcal{H}(\mu)$ is thus foliated by these curves. Similarly, we define $\mathcal{H}^\beta(\mu)$ restricting to $\MM(\beta)$

\item Last, we consider the map
$$\ev_0\times\mom:N_\RR\times\MM_{0,q+1}^\tau\longrightarrow N_\RR\times\RR^{q-1}.$$
Its image is a hypersurface $\widehat{\mathcal{H}}$. We then have $\mathcal{H}(\mu)=\pi_1(\widehat{\mathcal{H}}\cap N_\RR\times\{\mu\})$, where $\pi_1$ denotes the projection on the first factor. Similarly, we recover $\mathcal{H}^\beta(\mu)$ by projecting after intersecting with $\widehat{\mathcal{H}}^\beta=(\ev_0\times\mom)(N_\RR\times\mathrm{ft}^{-1}\left(\MM(\beta)\right))$.
\end{itemize}

Each $\widehat{\mathcal{H}}^\beta(\mu)$ is further subdivided according to the position of the marked point $p_0$, we obtain for each $s$,
$$\mathcal{H}^{\beta_s}(\mu)=\ev_0(\mom^{-1}(\mu)\cap\MM(\beta_s)).$$

Notice that a choice of orientation on $\MM(\beta_s)$ and $\RR^{q-1}$ induces an orientation on $\mom^{-1}(\mu)\cap\MM(\beta_s)=\mom|_{\MM(\beta_s)}^{-1}(\mu)$, and thus an orientation on $\mathcal{H}^{\beta_s}(\mu)$.

\medskip

Assuming $\omega$ is non-degenerate, we have the following statement.

\begin{prop}
On each $\mathcal{H}^\beta(\mu)\subset N_\RR$, the foliation provided by the family of curves coincide with the characteristic foliation provided by the symplectic form $\omega$ on $N_\RR$.
\end{prop}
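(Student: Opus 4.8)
The plan is to check, at a generic point $x\in\mathcal{H}^\beta(\mu)$, that the tangent line to the leaf of the curve-foliation through $x$ is exactly the radical of $\omega$ restricted to the hyperplane $T_x\mathcal{H}^\beta(\mu)$. Since $\omega$ is non-degenerate, $(N_\RR,\omega)$ is symplectic and $\mathcal{H}^\beta(\mu)$ is a hypersurface, so that radical is a one-dimensional distribution; being the kernel of the closed $2$-form $\omega|_{\mathcal{H}^\beta(\mu)}$ it is integrable, and its leaves constitute by definition the characteristic foliation. As a foliation is determined by its tangent distribution, identifying these two line fields finishes the proof.

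First I would normalize coordinates on $\CC P^1$ so that $p_1,p_2,p_q$ become $0,1,\infty$; a curve of the family is then $f(y)=\xi+\sum_{k=1}^{q-1}n_k\log|y-b_k|\in N_\RR$, the family $\mom'^{-1}(\mu)$ being cut out, inside the space of pairs $\big(\xi,(b_k)_{3\le k\le q-1}\big)$, by fixing the moments $\mu_k=\omega(n_k,\xi)+\sum_{i\ne k}\omega(n_k,n_i)\log|b_k-b_i|$ for $1\le k\le q-1$. Writing $c_k=(b_0-b_k)^{-1}$ and letting $b_0$ be the free coordinate of $p_0$, the evaluation $\ev_0$ sends $(\xi,b_0,(b_k))$ to $x=f(b_0)$, and differentiating gives
$$d\ev_0\cdot(\dot\xi,\dot b_0,(\dot b_k))=\Big(\sum_{k=1}^{q-1}c_kn_k\Big)\dot b_0+\Big(\dot\xi-\sum_{k=1}^{q-1}c_kn_k\dot b_k\Big).$$
Hence, at a generic $x$, $T_x\mathcal{H}^\beta(\mu)=\RR\tau\oplus W$, where $\tau:=f'(b_0)=\sum_{k=1}^{q-1}c_kn_k$ is the tangent to the curve-leaf through $x$ — it lies in $T_x\mathcal{H}^\beta(\mu)$ because moving only $b_0$ keeps the curve in the family, which is exactly the statement that $\mathcal{H}^\beta(\mu)$ is foliated by the curves — and $W$ is the image of the deformations $(\dot\xi,(\dot b_k))$ with $\dot\mu_k=0$ for all $k$, i.e.\ those satisfying $\omega(n_k,\dot\xi)=-\sum_{i\ne k}\omega(n_k,n_i)\frac{\dot b_k-\dot b_i}{b_k-b_i}$.

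The core of the argument is then the identity $\iota_\tau\omega|_{T_x\mathcal{H}^\beta(\mu)}=0$. Trivially $\omega(\tau,\tau)=0$, so it remains to prove $\omega(\tau,w)=0$ for $w=\dot\xi-\sum_kc_kn_k\dot b_k\in W$. Expanding,
$$\omega(\tau,w)=\sum_kc_k\,\omega(n_k,\dot\xi)-\sum_{k,i}c_kc_i\,\omega(n_k,n_i)\,\dot b_i,$$
I would substitute the deformation relations for $\omega(n_k,\dot\xi)$ and use the elementary identity $\frac{1}{b_k-b_i}=\frac{c_kc_i}{c_k-c_i}$; a relabelling $k\leftrightarrow i$ together with the antisymmetry of $\omega(n_k,n_i)$ collapses $\sum_kc_k\omega(n_k,\dot\xi)$ to $\sum_{k\ne i}c_kc_i\omega(n_k,n_i)\dot b_i$, which cancels the second sum, so that $\omega(\tau,w)=0$. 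Since $\tau\ne0$ for generic $b_0$ and the radical of $\omega|_{T_x\mathcal{H}^\beta(\mu)}$ is one-dimensional, it is spanned by $\tau$; hence the curve-foliation and the characteristic foliation have the same tangent line field, and therefore coincide. The case with complex marked points is entirely analogous, replacing the relevant real coordinates by pairs of real and imaginary parts. I expect the main work to be precisely the algebraic cancellation above, together with a careful bookkeeping of which marked points are frozen by the $\CC P^1$-normalization; the symplectic-geometry ingredient — the canonical, automatically integrable characteristic line field of a hypersurface in a symplectic manifold — is classical.
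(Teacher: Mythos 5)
Your proof is correct and follows essentially the same strategy as the paper: compute the tangent vector $\tau$ to the curve-leaf at the evaluation point, compute the tangent space to $\mathcal{H}^\beta(\mu)$ by differentiating the fixed-moment constraints, and verify $\iota_\tau\omega$ kills that tangent space by an index relabelling combined with the antisymmetry of $\omega$. The only notable difference is the normalization: the paper places the free marked point $p_0$ at $\infty$ (so that $\ev_0$ becomes the projection $(\xi,(b_j))\mapsto\xi$ and $\tau_0=\sum 2b_k n_k$, with $\sum n_k=0$ needed at the final step), whereas you place $p_0$ at a generic finite $b_0$ and send some boundary point to $\infty$, which introduces the $c_k=(b_0-b_k)^{-1}$ and the extra term $-\sum c_kn_k\dot b_k$ in the image of $d\ev_0$, but absorbs the balancing condition into the coordinate choice so it never appears explicitly in the cancellation. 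Both versions reduce to the same two-line symmetrization; your write-up is also a bit more explicit about the symplectic-geometry background (one-dimensional radical, automatic integrability) that the paper takes for granted.
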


\begin{proof}
Consider a family 
$$f_t:y\longmapsto \xi(t)+\sum_{j=1}^q 2n_j \log(y-b_j(t)),$$
of parametrized oriented real curves with fixed moments $\mu$, where the coordinate $y$ is chosen so that $p_0$ is $\infty$. In particular, each function
$$\omega(n_k,\xi(t))+\sum_{j=1}^q 2\omega(n_k,n_j)\log(b_k(t)-b_j(t))=\mu_k,$$
is constant. Thus, for each $k$, one has the relation
$$\omega(n_k,\xi'(t))+\sum_{j=1}^q 2\omega(n_k,n_j)\frac{b_k'(t)-b_j'(t)}{b_k(t)-b_j(t)}=0.$$
Using the coordinate $z=-\frac{1}{y}$, for which the parametrization becomes
$$\hat{f}_t:z\longmapsto\xi(t)+\sum_{j=1}^q 2n_j\log(1+b_jz),$$
we compute the tangent vector $\tau_0$ to the curve at the point of parameter $y=\infty$, \textit{i.e.} $z=0$ which is mapped to $\xi(t)\in N_\RR$:
$$\tau_0=\left. \frac{\partial \hat{f}_t}{\partial z}\right|_{z=0}=\sum_{k=1}^q 2b_k n_k.$$
Meanwhile, the evaluation at $p_0$ being the projection $N_\RR\times\MM(\beta)\rightarrow N_\RR$, the derivative of $\mathrm{ev}_0(f_t)$ is just $\xi'(t)$. Hence, to prove that the two foliations coincide, we just need to prove that
$$\omega\left(\sum_{k=1}^q 2b_k(t)n_k,\xi'(t)\right) =0,$$
so that the tangent space to $\mathcal{H}^\beta(\mu)$ is $\omega$-orthogonal to $\tau_0$. Using the previously mentioned relations and the antisymmetry of $\omega$, we get
\begin{align*}
\omega\left(\sum_{k=1}^q 2b_k(t)n_k,\xi'(t)\right) & = 2\sum_{k=1}^q b_k(t)\omega(n_k,\xi'(t)) \\
& = -2\sum_{k=1}^q \sum_{j=1}^q 2\omega(n_k,n_j)\frac{b_k'-b_j'}{b_k-b_j} \\
& = -4\left( \sum_{j,k}\frac{\omega(n_k,n_j)}{b_k-b_j}b_k' - \sum_{j,k} \frac{\omega(n_k,n_j)}{b_k-b_j}b_j' \right) =0,\\
\end{align*}
yielding the result.
\end{proof}

If $\omega$ is degenerate, it suffices to translate the result by the action of the kernel of $\omega$.

\begin{rem}
In particular, as the foliating curves are oriented, this fixes an orientation of $\mathcal{H}^\beta(\mu)$ by choosing the normal vector to be $\iota_{\tau_0}\omega$, where $\tau_0$ is the oriented tangent vector to the leaf of the foliation passing through the point. Moreover, one can check that this orientation coincides with the previous orientations described on $\mathcal{H}^\beta(\mu)$. Therefore, to compute the intersection index with a curve $C_A$ inside $N_\RR$, one just needs to evaluate $\iota_{\tau_0}\omega$ on the tangent vector to the curve $C_A$.
\end{rem}

\subsection{Compactification of the space of parametrized curves}
\label{generic choice omega}


The moduli space $\MM_{0,R,S}^\tau$ is not compact, but can be compactified into the space of oriented real rational stable curves $\overline{\MM}_{0,R,S}^\tau$ which is compact. Similarly, the moduli space of parametrized curves $N_\RR\times\MM_{0,R,S}^\tau$ is not compact but can be compactified considering maps from oriented real rational stable curves to the toric variety $\CC\Delta$. Concretely, it means that some components of a reducible stable curve might be sent to the toric boundary of $\CC\Delta$.

\medskip

For a family of curves $C^{(t)}\in N_\RR\times\MM_{0,R,S}^\tau$ converging to $f:C^{(0)}\rightarrow\CC\Delta$, with $C^{(0)}$ a semi-stable curve, the disposition of the components of $C^{(0)}$ can be seen with the tropical limit: the vertices of the tropical limit curve correspond to the components of $C^{(0)}$. And the strata of $\CC\Delta$ that they are mapped to depends on the position of its image in the fan of $\CC\Delta$ inside $N_\RR$. In particular, the curve as a component in the dense orbit if and only if the tropical curve passes through $0$.

\medskip

Inside the moduli space $\Gamma\in N_\RR\times\MM_{0,N}^\tau$ of parametrized tropical curves of degree $\Delta_\mathrm{trop}$, the set of parametrized tropical curves passing through $0$ is a subfan of dimension $N-1$. Hence, if $\omega$ is chosen generically, we can assume that the moment map is injective on its cones, unless the curve has some \textit{flat vertex}: a vertex for which the outgoing edges belong to the same line. It implies that the only parametrized tropical curve passing through $0$ with moments $0$ is the curve with a unique vertex centered at $0$. In particular, the only curves having moment $0$ are reducible curves all of whose components are mapped to the main strata of the toric variety. This assumption further implies that $\omega$ does not vanish on $a_W\wedge b_W$ for any trivalent vertex of any rational tropical curve of degree $\Delta$, which is the assumption needed in \cite{blomme2020refined} to get tropical refined invariants in $\ZZ[\Lambda^2 N]$.

\begin{prop}
\label{prop reducible limit}
If $\omega$ is chosen in a generic way, a family of curves with fixed momenta cannot converge to a reducible curve, some of whose components are mapped to the toric boundary, and at least one is mapped to the main strata.
\end{prop}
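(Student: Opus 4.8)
The plan is to obtain a contradiction by passing to the tropical limit. Suppose a family $C^{(t)}$ of parametrized oriented real rational curves of degree $2\Delta$, all with momenta equal to a fixed $\mu$, converges to a stable map $f:C^{(0)}\to\CC\Delta$ such that at least one component of $C^{(0)}$ is mapped to the dense orbit $N_{\CC^*}$ and at least one component is mapped into the toric boundary. After a base change I would view the family as defined over a complete valued field and take its tropical limit $h:\Gamma\to N_\RR$, a parametrized rational tropical curve whose vertices correspond to the components of $C^{(0)}$, each component being mapped to the stratum of $\CC\Delta$ attached to the cone of the fan whose relative interior contains the image of the corresponding vertex. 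This is the dictionary recalled above and worked out in \cite{blomme2020computation}.

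I would then extract two consequences of the hypotheses. First, the component of $C^{(0)}$ lying in the dense orbit forces $\Gamma$ to pass through $0$, so $\Gamma$ belongs to the subfan of tropical curves through $0$ considered in Section~\ref{generic choice omega}, whereas the component lying in the toric boundary forces $\Gamma$ to have a vertex whose image is a nonzero point of the fan; hence $\Gamma$ is not the tropical curve with a unique vertex at $0$, so it lies in a cone of positive dimension of that subfan. Second, since the momenta of all the $C^{(t)}$ are equal to the constant $\mu$, they have valuation $0$, so the tropical momenta of $\Gamma$ all vanish. Thus $\Gamma$ is a parametrized tropical curve passing through $0$ with vanishing momenta. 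By the genericity of $\omega$ set up in Section~\ref{generic choice omega} (the moment map is injective on the cones of the $(N-1)$-dimensional subfan of curves through $0$, so that the only such curve with vanishing momenta is the one with a unique vertex at $0$), this contradicts the previous sentence, and the proposition follows.

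The dimension count for the subfan of curves through $0$ and the identification of the tropical momenta of $\Gamma$ with the valuations of the classical momenta are routine, as is the base change producing a one-parameter family over a valued field. I expect the main obstacle to be making the first step precise: for a degenerating family inside a \emph{fixed} toric variety, as opposed to one whose constraints themselves move towards tropical position, one has to pin down the correspondence between the components of the nodal limit $C^{(0)}$ and the vertices of $\Gamma$, and in particular the fact that a component landing in the dense orbit corresponds to a vertex over $0$ while a component landing in the toric boundary corresponds to a vertex over a nonzero point of the fan. This rests on the stable-map compactification of Section~\ref{generic choice omega} together with the tropicalisation results of \cite{blomme2020computation}; once it is available the genericity of $\omega$ closes the argument immediately. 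Flat vertices are not an issue, since the genericity statement of Section~\ref{generic choice omega} already accounts for them, so that the conclusion forcing $\Gamma$ to be the single-vertex curve is unaffected.
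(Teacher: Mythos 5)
Your overall strategy agrees with the paper's: pass to the tropical limit $h:\Gamma\to N_\RR$, observe that the component in the dense orbit forces $\Gamma$ to pass through $0$ while the component in the boundary forces $\Gamma$ to be nontrivial, and that fixed classical momenta give vanishing tropical momenta, then invoke the generic choice of $\omega$ to conclude. Up to that point the argument matches the paper's.

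However, your dismissal of flat vertices is a genuine gap, and it is precisely the step the paper's proof is careful to handle. The genericity statement in Section~\ref{generic choice omega} asserts that the moment map is injective on the cones of the subfan of tropical curves through $0$ \emph{except} on cones whose curves have a flat vertex (a vertex all of whose adjacent edges lie on a single line); in other words, the genericity of $\omega$ deliberately does \emph{not} rule out a nontrivial $\Gamma$ through $0$ with vanishing momenta when $\Gamma$ has a flat vertex. So at the point you reach the conclusion ``$\Gamma$ is the single-vertex curve,'' you have in fact only shown ``$\Gamma$ is the single-vertex curve or $\Gamma$ has a flat vertex.'' The paper's proof then closes this case with a separate geometric argument: the component of $C^{(0)}$ sitting over a flat vertex admits a factorization $\CC P^1\to\CC P^1\dashrightarrow N_{\CC^*}$, the second map being the cocharacter $t\mapsto t^n$ for $n$ the primitive direction of the flat vertex and the first being a ramified cover whose nonsimple ramification occurs only over $0,\infty$ with profile given by the adjacent edge weights; such a curve cannot have distinct intersection points with the toric boundary, hence cannot arise as a limit of the family. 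Without this argument (or some substitute) your proof does not rule out the flat-vertex case and does not establish the proposition. If you want to salvage your write-up, you should replace the sentence claiming the genericity ``already accounts for'' flat vertices with this factorization-and-ramification argument.
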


\begin{proof}
Due to the generic choice, the only tropical curves with moments $0$ and passing through $0$ have a flat vertex. However, the parametrized curve associated to a flat vertex can factor as a map
$$\CC P^1\rightarrow\CC P^1\dashrightarrow N_{\CC^*},$$
where the second map is the cocharacter $t\in\CC^*\mapsto t^n\in N_{\CC^*}$ with $n$ the primitive vector directing the line containing the edges adjacent to the flat vertex, and the first map is some ramified cover of $\CC P^1$, with only $0,\infty\in\CC P^1$ being non-simple ramification points. The ramification profile of $0$ and $\infty$ is given by the slopes of the edges adjacent to the flat vertex. Such a curve cannot have distinct intersection points with the toric boundary, and thus cannot happen as a limit either. Therefore, there are no flat vertex, and the parametrized tropical curve is the trivial one.
\end{proof}

\begin{coro}
For a generic $\mu$, the hypersurfaces $\mathcal{H}^{\beta_s}(\mu)$ and $\mathcal{H}^{\beta}(\mu)$ are closed inside $N_\RR$.
\end{coro}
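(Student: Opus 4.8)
The plan is to unwind the definition of closedness by hand. It suffices to treat $\mathcal{H}^{\beta_s}(\mu)$, since $\mathcal{H}^\beta(\mu)$ is the finite union of the $\mathcal{H}^{\beta_s}(\mu)$ over $s$. So I would take a point $x$ in the closure of $\mathcal{H}^{\beta_s}(\mu)$, write it as $x=\lim_n\ev_0(C_n)$ where the $C_n\in N_\RR\times\MM(\beta_s)$ are honest parametrized oriented curves of degree $2\Delta_B$ having \emph{constant} momenta $\mom(C_n)=\mu$, and show this forces the existence of an honest curve in $\mom^{-1}(\mu)\cap\MM(\beta_s)$ with $\ev_0$ equal to $x$. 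The engine is compactness of the moduli space of stable maps of degree $2\Delta_B$ to $\CC\Delta$: after passing to a subsequence the $C_n$ converge to a stable map $f\colon C^{(0)}\to\CC\Delta$, and the whole content of the proof is to rule out any degeneration of $f$, i.e.\ to prove $C^{(0)}$ is irreducible with pairwise distinct marked points.

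The key steps, in order, are: (i) record that $\ev_0(f)=x\in N_\RR$, so the component of $C^{(0)}$ carrying the free marked point $p_0$ is mapped into the dense orbit, not into the toric boundary; (ii) apply Proposition~\ref{prop reducible limit} to the family $(C_n)$, which has fixed momenta $\mu$: it cannot converge to a reducible curve having a component in the toric boundary together with a component in the main orbit, so combined with (i) this shows that in fact \emph{no} component of $C^{(0)}$ lies in the toric boundary; (iii) deduce that every component of $C^{(0)}$ lies in the dense orbit, hence the tropical limit of the family is the trivial single-vertex curve centered at the origin, whose tropical momenta all vanish, so having momenta $\mu$ would force the tropicalization of $\mu$ to be $0$, contradicting the genericity of $\mu$; (iv) conclude that $C^{(0)}$ is irreducible with distinct marked points, so $f$ is an honest parametrized oriented curve; (v) use continuity of $\ev_0$ and $\mom$ together with the locally constant nature of the cyclic order of the marked points to see that $f\in\mom^{-1}(\mu)\cap\MM(\beta_s)$ and $\ev_0(f)=x$, whence $x\in\mathcal{H}^{\beta_s}(\mu)$.

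The main obstacle is step (iii): ruling out the degeneration in which $C^{(0)}$ becomes reducible while keeping all its components in the dense orbit, i.e.\ the "bubbling'' degenerations in which marked points collide or the domain grows a rational tail carrying a contracted branch. This is exactly the point at which the genericity of $\mu$ (and of $\omega$) must be used — without it such configurations genuinely appear in the compactification and the statement fails. The argument is that such a configuration has trivial tropical type (all vertices at the origin), and for a trivial tropical curve the tropical evaluation at each end is forced to vanish, so imposing generic momenta $\mu$ is impossible; alternatively one can phrase it via the injectivity of the moment map on the cones of the "passing through $0$'' subfan established just before Proposition~\ref{prop reducible limit}. One small point to verify along the way is that $\mom$ stays continuous across these potential degenerations, which holds because each monomial $\iota_{n_k}\omega$ restricts to a regular, non-identically-zero function on the divisor $D_{n_k}$ (since $\omega(n_k,n_k)=0$), so the momenta of the limiting map are still well defined and equal to $\mu$.
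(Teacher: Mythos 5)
Your overall architecture (compactness of stable maps, Proposition~\ref{prop reducible limit} to rule out mixed degenerations, genericity of~$\mu$ to kill the remaining case, continuity to conclude) is the same as the paper's, and steps (i), (ii), (iv), (v) are fine. The place where you diverge, and where your argument has a genuine gap, is step (iii).

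You want to rule out a limit $C^{(0)}=C_A\cup C_B$ all of whose components map to the dense orbit. Your reasoning is that the tropical limit of such a degeneration is the trivial single-vertex curve at the origin, ``whose tropical momenta all vanish, so having momenta $\mu$ would force the tropicalization of $\mu$ to be $0$''. This does not parse into a valid argument: the curves $C_n$ are a \emph{fixed} (not $t$-parametrized) family in $N_\RR\times\MM(\beta_s)$ with constant moments $\mu$, and when they degenerate to a reducible curve with all components in the dense orbit, the moment map remains a finite, continuous quantity -- there is no scaling parameter against which to ``tropicalize'' $\mu$, and the trivial tropical curve at the origin is not the tropical limit of this family in any sense that would constrain~$\mu$. (The injectivity-on-cones statement you cite from the paragraph preceding Proposition~\ref{prop reducible limit} is about tropical curves with moment \emph{zero} passing through the origin; it is exactly what drives that proposition, and it is not what you need here.) What you actually need is much more elementary: if all components of $C^{(0)}$ lie in the dense orbit, then the moments at the $q'$ marked points lying on the component $C_B$ not carrying $p_0$ must satisfy a Menelaus relation (they sum to zero, by the argument in the proof of Proposition~\ref{prop which side of the wall}); this is a nontrivial linear condition on a proper subfamily of $\mu$, which a generic $\mu$ does not satisfy. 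That one-line observation is the paper's step~(iii) and is what closes the gap.

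Your final remark about continuity of $\mom$ across degenerations is a reasonable thing to worry about, but the justification you offer (``$\iota_{n_k}\omega$ restricts to a regular, non-identically-zero function on $D_{n_k}$ since $\omega(n_k,n_k)=0$'') is addressing a slightly different issue; the relevant point is simply that once one has shown no component of $C^{(0)}$ leaves the dense orbit, the marked points stay in the domain where the logarithmic moment formula is smooth, so $\mom$ is manifestly continuous there. No divisor computation is needed.
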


\begin{proof}
If $\mu$ is chosen generically, no subfamily of $\mu$ satisfies the Menelaus condition. Thus, there are no reducible curves with all components in the main strata in the adherence of $\mathcal{H}^\beta(\mu)$. Thus, the only possible degenerations are when some component is sent to the boundary. By Proposition \ref{prop reducible limit}, a reducible curve limit of curves in $\mathcal{H}^\beta(\mu)$ has then all its components sent to the boundary of the toric variety. Then, the boundary is sent to infinity and $\mathcal{H}^{\beta_s}(\mu)$ and $\mathcal{H}^{\beta}(\mu)$ are closed inside $N_\RR$.
\end{proof}

\begin{rem}
In the planar case considered in \cite{mikhalkin2017quantum}, it is another argument that allows one to avoid curves with components sent to the boundary: such curves would have too many intersection points with the toric boundary.
\end{rem}

\section{Proof of refined invariance}

We now get to the proof of Theorem \ref{theorem invariance}.

\begin{proof}[Proof of Theorem \ref{theorem invariance}] We want to prove that the refined count $R_{\Delta,\omega}(P,\mu)$ does not depend on the choice of $(P,\mu)$ as long as it is generic. Let $(P(t),\mu(t))_{t\in[0;1]}$ be a generic path between two regular values of the evaluation map. In particular, it is transverse to the evaluation map and $\ev^{-1}\{(P(t),\mu(t))\}$ is a manifold of dimension $1$.

For every $t$ outside a finite set $F$, $(P(t),\mu(t))$ is a regular value of the evaluation map. Notice that the quantum index is locally constant:
\begin{itemize}[label=$\circ$]
\item In the totally real case, this is immediate since the quantum index does only depend on the cyclic order on the marked points.
\item If there are complex constraints, the quantum index is a shift of the log-area, which is continuous. The shift depends on the arguments of the intersection points with the toric boundary, which are fixed by the chosen complex constraints. Thus, although the quantum index may take several values on the same component $\MM(\gamma,J)$, it is constant for a family of curves with bounded constraints.
\end{itemize}
As the sign  $\sigma$ used to count solutions is also locally constant, the refined count $R_{\Delta,\omega}(P(t),\mu(t))$ is locally constant, and therefore constant outside this finite set of values $F$. We now need to show the invariance of this count at a value $t_*\in F$.

\medskip

According to Proposition \ref{prop reducible limit}, when $t$ goes to $t_*$, a curve $C_t\in\ev^{-1}(P(t),\mu(t))$ can either go to some curve $C_{t_*}$ inside a component of $N_\RR\times\MM_{0,R,S}^\tau$, or to some reducible curve all of whose components are mapped to the main strata of the toric variety, thanks to the genericity of the choice of $\omega$. We need to show the invariance at both kinds of walls.
\begin{itemize}[label=$\bullet$]
\item In the first case, the invariance follows from the fact that the quantum index is locally constant on the components of $N_\RR\times\MM_{0,R,S}^\tau$ if we fix the constraints, and that the sign used to count the curves is the sign of the differential of the evaluation map. The local invariance results from the standard definition of the degree of a map between two manifolds.
\item In the second case, as the path $(P(t),\mu(t))$ is chosen generically, we can assume that $C_t$ goes to some reducible curve $C_A\cup C_B$, associated to a decomposition $\Delta_A\sqcup\Delta_B$ of $\Delta$. Let $k_A$ and $k_B$ be the quantum indices of $A$ and $B$ respectively, so that for $t$ close to $t_*$, one has $k(C_t)=k_A+k_B$. When $t$ passes through the value $t_*$, the node of $C_A\cup C_B$ gets resolved in a different way: if we keep the orientation on $C_A$ fixed, the quantum index passes from $k_A+k_B$ to $k_A-k_B$. Thus, we do not have local invariance on the domain. Fortunately, when a reducible curve is mapped to $(P(t),\mu(t))$ by the evaluation map, there are many other reducible curves that do so, and this provide invariance, as shown in the rest of the proof.
\end{itemize}

The rest of the proof is devoted to prove the invariance near a wall of reducible curves. We consider $C_t\in N_\RR\times\MM(\gamma,J)$ that degenerates to $C_A\cup C_B$, and denote the order on the points as in section \ref{section coordinates reducible curves}:
$$p_0<\alpha(1)<\cdots<\alpha(r)<\beta(1)<\cdots<\beta(q)<\alpha(r+1)\cdots<\alpha(p)<p_0.$$
Notice that varying the value of $r$ and the position of the node on the component $C_B$, the quantum index of the smoothing in the orientation preserving way has quantum index $k_A+k_B$. We need to find those solutions and prove that their signed count remains invariant.

\begin{itemize}[label=$\circ$]
\item Solving for the reducible curves $C_A\cup C_B$ mapped to $(P(t_*),\mu(t_*))$ by the evaluation map can be done as follows: letting $\mu(t_*)=(\mu_A,\mu_B)$, first, solve for $C_A$, which has fixed moments $\mu_A$ and maps $p_0$ to $P(t_*)$, and then solve for $C_B$, which has fixed moments $\mu_B$ and has to intersect $C_A$. For each possible $C_A$, the set of possible $C_B$ is in bijection with the intersection points of $C_A$ and the hypersurface of momenta $\mathcal{H}^\beta(\mu_B)$: the reducible curve corresponding to an intersection point is the union of $C_A$ and the leaf of the foliation of $\mathcal{H}^\beta(\mu_B)$ passing through the intersection point. We can further impose the condition that the intersection point between $C_A$ and $C_B$ belongs to a preferred component of the curves of $\MM(\beta)$, intersection with $\MM(\beta_s)$ for some $s$. Intersecting with $\mathcal{H}^{\beta_0}(\mu_B)$, we get the curves $C_B$ that meet $C_A$ and such that the node of $C_A\cup C_B$ belongs to the interval $[\beta(q);\beta(1)]$.
\item Next, we observe that as $C_A$ is contractible in $N_\RR$ and $\mathcal{H}^{\beta_0}(\mu_B)$ has no boundary, the signed number of intersection points between $C_A$ and $\mathcal{H}^{\beta_0}(\mu_B)$ is $0$. This is also true for the $\mathcal{H}^{\beta_s}(\mu_B)$. Moreover, choosing the orientation of $\mathcal{H}^{\beta_0}(\mu_B)$ given by the oriented foliation, this intersection number is
$$\sum_{C_B}\mathrm{sgn}\ \omega(\tau_A,\tau_B)=0,$$
where the sum is over the leaves of the foliation that intersect $C_A$. It means that there are the same number of leafs intersecting $C_A$ positively and negatively. According to Proposition \ref{prop which side of the wall}, this sign $\mathrm{sgn}\ \omega(\tau_A,\tau_B)$ also determines on which side of the wall the smoothing in the orientation preserving way lies. Thus, to conclude, we only need to check that all the smoothings are counted with the same sign $\sigma$, so that we have invariance.

\item We start with the totally real case whose situation is easier to handle. The sign $\sigma(C_t)$ is equal to the sign of the Jacobian of the evaluation map when the domain is endowed with the orientation $\oo(\nu_{1,N})$ and the rows corresponding to the codomain are ordered in the order prescribed by the orientation. We now choose on the domain the coordinates $(\widehat{a_1},\dots,a_r,\eta,b_2,\dots,b_{q-1},\delta,a_{r+1},\dots,\widehat{a_p})$, which induce the orientation $\oo(\nu_{1,N})$ if $r\neq 0,p$, and $\oo(\nu_{q+1,N})$, $\oo(\nu_{1,p})$ respectively else.

\medskip

First, we reorder the rows and columns to match the order
$$\alpha(1)<\cdots<\alpha(p)<\beta(1)<\cdots<\beta(q).$$
This changes the determinant by a sign $(-1)^q$ if $r\neq 0,p$. If $r=p$, there is nothing to move, but $\oo(\nu_{1,p})=(-1)^q\oo(\nu_{1,N})$, so we still get the $(-1)^q$. If $r=0$, doing the reordering does not affect the sign, but $\oo(\nu_{q+1,N})=(-1)^q\oo(\nu_{1,N})$, so we still get the $(-1)^q$.

\medskip

We now compute the Jacobian matrix in the above coordinate. The moments at the marked points take the following form:
\begin{align*}
\mu_{\alpha(k)} = &  \omega(n_{\alpha(k)},\xi) 
	 + \sum_{i=1}^p 2\omega(n_{\alpha(k)},n_{\alpha(i)}) \log|a_k-a_i| 
	 + \sum_{j=1}^q 2\omega(n_{\alpha(k)},n_{\beta(j)}) \log|a_k-(\eta+\delta b_j)| \\
\mu_{\beta(k)} = &  \omega(n_{\beta(k)},\xi)
	 + \sum_{i=1}^p 2\omega(n_{\beta(k)},n_{\alpha(i)}) \log|\eta+\delta b_k-a_i| 
	 + \sum_{j=1}^q 2\omega(n_{\beta(k)},n_{\beta(j)}) \log|b_k-b_j| \\
\end{align*}

An elementary computation yields the partial derivatives, which have the following limits when $\delta\rightarrow 0$:
\begin{align*}
\frac{\partial\mu_{\alpha(k)}}{\partial a_l}  & \xrightarrow[\delta\to 0]{}
\left\{ \begin{array}{l}
-\frac{ 2\omega(n_{\alpha(k)},n_{\beta(l)}) }{a_k-a_l} \text{ if }l\neq k \\
\sum_{i=1}^p \frac{ 2\omega(n_{\alpha(k)},n_{\alpha(i)}) }{a_k-a_i}\text{ if }l=k\\
\end{array} \right. ,
&  \frac{\partial\mu_{\alpha(k)}}{\partial b_l}  & \xrightarrow[\delta\to 0]{} 0,\\
\frac{\partial\mu_{\beta(k)}}{\partial a_l}  & \xrightarrow[\delta\to 0]{} -\frac{ 2\omega(n_{\beta(k)},n_{\alpha(l)}) }{\eta-a_l},
& \frac{\partial\mu_{\beta(k)}}{\partial b_l} & \xrightarrow[\delta\rightarrow 0]{} \left\{ \begin{array}{l}
-\frac{ 2\omega(n_{\beta(k)},n_{\beta(l)}) }{b_k-b_l} \text{ if }l\neq k \\
\sum_{j=1}^q \frac{ 2\omega(n_{\beta(k)},n_{\beta(j)}) }{b_k-b_j} \text{ if }l=k\\
\end{array} \right. ,  \\
\end{align*}

\begin{align*}
\frac{\partial\mu_{\alpha(k)}}{\partial \eta}  \xrightarrow[\delta\to 0]{} & 
 0, &
\frac{\partial\mu_{\alpha(k)}}{\partial \delta} \xrightarrow[\delta\to 0]{} & 
 -\frac{1}{a_k-\eta}\sum_{j=1}^q 2\omega(n_{\alpha(k)},n_{\beta(j)})b_j \\
 & & & =-\frac{1}{a_k-\eta}\omega(n_{\alpha(k)},\tau_B),\\
\frac{\partial\mu_{\beta(k)}}{\partial \eta}  \xrightarrow[\delta\to 0]{} & 
 \sum_{i=1}^p \frac{2\omega(n_{\beta(k)},n_{\alpha(i)})}{\eta - a_i}   , &
\frac{\partial\mu_{\beta(k)}}{\partial \delta} \xrightarrow[\delta\to 0]{}  & 
\sum_{i=1}^p 2\omega(n_{\beta(k)},n_{\alpha(i)})\frac{b_k}{\eta - a_i}. \\
 & =\omega(n_{\beta(k)},\tau_A) & & \\
\end{align*}
The Jacobian matrix is as follows:

\begin{center}
\begin{tabular}{ccllcccccccc}
                                           & \multicolumn{3}{c}{$\xi$}                  & $\widehat{a_1}$      & $\cdots$  & $\widehat{a_p}$ & $\eta$                                           & $b_2$      & $\cdots$  & $b_{q-1}$                          & $\delta$                                         \\ \cline{2-12} 
\multicolumn{1}{c|}{\multirow{3}{*}{$\ev_{m_i}$}} & \multicolumn{3}{c|}{\multirow{3}{*}{*}} & 0      & $\cdots$  & \multicolumn{1}{c|}{0}      & \multicolumn{1}{c|}{0}                        & 0      & $\cdots$  & \multicolumn{1}{c|}{0}     & \multicolumn{1}{c|}{0}                        \\
\multicolumn{1}{c|}{}                      & \multicolumn{3}{c|}{}                   & $\vdots$  & $\ddots$  & \multicolumn{1}{c|}{$\vdots$}  & \multicolumn{1}{c|}{$\vdots$}                    & $\vdots$  & $\ddots$  & \multicolumn{1}{c|}{$\vdots$} & \multicolumn{1}{c|}{$\vdots$}                    \\
\multicolumn{1}{c|}{}                      & \multicolumn{3}{c|}{}                   & 0      & $\cdots$  & \multicolumn{1}{c|}{0}      & \multicolumn{1}{c|}{0}                        & 0      & $\cdots$  & \multicolumn{1}{c|}{0}     & \multicolumn{1}{c|}{0}                        \\ \cline{2-12} 
\multicolumn{1}{c|}{\multirow{3}{*}{$\mu_\alpha$}}  & \multicolumn{3}{c|}{$\iota_n\omega$}            & \multicolumn{3}{c|}{\multirow{3}{*}{$\partial_a\mu_\alpha$}} & \multicolumn{1}{c|}{0}                        & \multicolumn{3}{c|}{\multirow{3}{*}{0}}       & \multicolumn{1}{c|}{\multirow{3}{*}{$\partial_\delta\mu_\alpha$}} \\
\multicolumn{1}{c|}{}                      & \multicolumn{3}{c|}{$\vdots$}              & \multicolumn{3}{c|}{}                         & \multicolumn{1}{c|}{$\vdots$}                    & \multicolumn{3}{c|}{}                         & \multicolumn{1}{c|}{}                         \\
\multicolumn{1}{c|}{}                      & \multicolumn{3}{c|}{$\iota_n\omega$}            & \multicolumn{3}{c|}{}                         & \multicolumn{1}{c|}{0}                        & \multicolumn{3}{c|}{}                         & \multicolumn{1}{c|}{}                         \\ \cline{2-12} 
\multicolumn{1}{c|}{\multirow{3}{*}{$\mu_\beta$}}  & \multicolumn{3}{c|}{$\iota_n\omega$}            & \multicolumn{3}{c|}{\multirow{3}{*}{$\partial_a\mu_\beta$}} & \multicolumn{1}{c|}{\multirow{3}{*}{$\partial_\eta\mu_\beta$}} & \multicolumn{3}{c|}{\multirow{3}{*}{$\partial_b\mu_\beta$}} & \multicolumn{1}{|c|}{\multirow{3}{*}{$\partial_\delta\mu_\beta$}} \\
\multicolumn{1}{c|}{}                      & \multicolumn{3}{c|}{$\vdots$}              & \multicolumn{3}{c|}{}                         & \multicolumn{1}{c|}{}                         & \multicolumn{3}{c|}{}                         & \multicolumn{1}{c|}{}                         \\
\multicolumn{1}{c|}{}                      & \multicolumn{3}{c|}{$\iota_n\omega$}            & \multicolumn{3}{c|}{}                         & \multicolumn{1}{c|}{}                         & \multicolumn{3}{c|}{}                         & \multicolumn{1}{c|}{}                         \\ \cline{2-12} 
\end{tabular}
\end{center}

\medskip

We notice that the matrix comprised of the six top left blocks is precisely the Jacobian square matrix for the evaluation map with respect to $C_A$, and does not depend on $C_B$. Unfortunately, we cannot develop right away because of the last column. However, we notice that due to $\sum n_{\beta(k)}=0$ the sum of the rows corresponding to $\mu_\beta$ is $0$ except in the last column. In the last column, we get $-\frac{1}{2}\omega(\tau_A,\tau_B)$. Then, we can develop with respect to the last row to get a block triangular matrix, and get rid of the first block that does not depend on $C_B$. We are left with the following matrix:
\begin{center}
\begin{tabular}{ccccc}
                           & $\eta$                                           & $b_2$            & $\cdots$            & $b_{q-1}$           \\ \cline{2-5} 
\multicolumn{1}{c|}{$\mu_{\beta(1)}$}   & \multicolumn{1}{c|}{\multirow{3}{*}{$\omega(n_{\beta(k)},\tau_A)$}} & \multicolumn{3}{c|}{\multirow{3}{*}{$\partial_b\mu_\beta$}} \\
\multicolumn{1}{c|}{$\vdots$} & \multicolumn{1}{c|}{}                         & \multicolumn{3}{c|}{}                         \\
\multicolumn{1}{c|}{$\mu_{\beta(q-1)}$}   & \multicolumn{1}{c|}{}                         & \multicolumn{3}{c|}{}                         \\ \cline{2-5} 
\end{tabular}
\end{center}

\medskip

To show that the total sign does not depend on the chosen $C_B$ intersecting $C_A$, we need to prove that up to a global sign, the sign of this matrix is once again $\omega(\tau_A,\tau_B)$, \textit{i.e.} the intersection index between $C_A$ and $\mathcal{H}^{\beta_0}(\mu_B)$ at the corresponding point. This is the case since both correspond in fact to the intersection index at $C_B$ between $C_A\times\{\mu_B\}$ and $\widehat{\mathcal{H}}^{\beta_0}$ inside $N_\RR\times \RR^{q-1}$. The two ways to compute the intersection index are as follows.
\begin{itemize}[label=$\triangleright$]
\item First, recall that if $\MM(\beta_0)$ denotes the component of $\MM_{0,q+1}^\tau$ corresponding to the order
$$p_0<\beta(1)<\cdots<\beta(q)<p_0,$$
then $\widehat{\mathcal{H}}^{\beta_0}$ is the image of
$$\ev_0\times\mom:N_\RR\times\MM(\beta_0)\longrightarrow N_\RR\times \RR^{q-1}.$$
As $\MM(\beta_0)$ is oriented, \textit{e.g.} with $\oo(\nu_{1,q})$, the image is naturally oriented. We get the following matrix:

\begin{center}
\begin{tabular}{cccccccc}
                           &                                           & \multicolumn{3}{c}{$\xi$}               & $b_2$ & $\cdots$   & $b_{q-1}$                     \\ \cline{2-8} 
\multicolumn{1}{c|}{}      & \multicolumn{1}{c|}{\multirow{3}{*}{$\tau_A$}} & $1$ &         & \multicolumn{1}{c|}{$0$} &   &         & \multicolumn{1}{c|}{} \\
\multicolumn{1}{c|}{$\ev_0$}    & \multicolumn{1}{c|}{}                     &   & $\ddots$   & \multicolumn{1}{c|}{}  &   & $0$       & \multicolumn{1}{c|}{} \\
\multicolumn{1}{c|}{}      & \multicolumn{1}{c|}{}                     & $0$ &         & \multicolumn{1}{c|}{$1$} &   &         & \multicolumn{1}{c|}{} \\ \cline{2-8} 
\multicolumn{1}{c|}{$\mu_{\beta(1)}$}   & \multicolumn{1}{c|}{$0$}                    &   &         & \multicolumn{1}{c|}{}  &   &         & \multicolumn{1}{c|}{} \\
\multicolumn{1}{c|}{$\vdots$} & \multicolumn{1}{c|}{$\vdots$}                &   & $\iota_n\omega$ & \multicolumn{1}{c|}{}  &   & $\partial_b\mu_\beta$ & \multicolumn{1}{c|}{} \\
\multicolumn{1}{c|}{$\mu_{\beta(q-1)}$}   & \multicolumn{1}{c|}{0}                    &   &         & \multicolumn{1}{c|}{}  &   &         & \multicolumn{1}{c|}{} \\ \cline{2-8} 
\end{tabular}
\end{center}

\medskip

We then use the columns corresponding to $\xi$ to cancel the first rows of the first column, and then develop with respect to the identity block. We are left with the desired matrix for $C_B\in\MM(\beta_0)$, which thus computes the intersection index between $C_A\times\{\mu_B\}$ and $\widehat{\mathcal{H}}^{\beta_0}$.

\item The second way of computing the intersection index is easier: $(C_A\times\{\mu_B\})\cap\widehat{\mathcal{H}}^{\beta_0}$ is computed by taking $\tau_A$, and a basis of the tangent space to $\widehat{\mathcal{H}}^\beta$ that is constructed as follows: take a basis of $\widehat{\mathcal{H}}^{\beta_0}\cap(N_\RR\times\{\mu_B\})=\mathcal{H}^{\beta_0}(\mu_B)\times\{\mu_B\}$ which is oriented using the foliation by oriented curves and the $2$-form, and concatenate it with a basis that projects to an oriented basis of $H$. The intersection index is then equal to $\omega(\tau_A,\tau_B)$, since we only need to evaluate $\iota_{\tau_B}\omega$ on $\tau_A$.
\end{itemize}

\item If there are complex setting, we proceed similarly with the corresponding coordinates described in section \ref{section coordinates reducible curves}. The computation is more technical because of the two types of coordinates on the moduli space.

\end{itemize}
\end{proof}

\begin{rem}
The key part of the proof is the one that asserts that the total signed intersection between $\mathcal{H}^\beta(\mu_B)$ and each $C_A$ is $0$. The corresponding part in the proof of the planar case in \cite{mikhalkin2017quantum} seems different because in that case, the hypersurface $\mathcal{H}^\beta(\mu_B)$ consists of a finite number of curves $C_B$, and each $C_A$ has several intersection points with each of these components. In higher dimensions, the various intersection points between $C_A$ and $\mathcal{H}^\beta(\mu_B)$ might not belong to the same leaf of the foliation.
\end{rem}

\bibliographystyle{plain}
\bibliography{biblio}

\end{document}